\documentclass[11pt]{amsart}

\usepackage{amssymb,amsmath,amsthm,cancel}

\newcommand{\inters}{\cap}
\newcommand{\from}{\colon}
\renewcommand{\implies}{\Rightarrow}
\newcommand{\E}{\mathrel{E}}
\newcommand{\G}{\mathrel{G}}
\newcommand{\mrel}[1]{\mathrel{#1}}
\renewcommand{\subset}{\subseteq}
\renewcommand{\supset}{\supseteq}
\newcommand{\comp}[1]{{#1}^{\text{c}}} 
\newcommand{\define}[1]{\textbf{#1}}

\newcommand{\union}{\cup}
\newcommand{\bigunion}{\bigcup}
\newcommand{\disjointunion}{\sqcup}

\newcommand{\biginters}{\bigcap}

\DeclareMathOperator{\dom}{dom}
\DeclareMathOperator{\ran}{ran}
\DeclareMathOperator{\Free}{Free}

\newcommand{\Z}{\mathbb{Z}}
\newcommand{\Zt}{(\Z/2\Z)}
\newcommand{\R}{\mathbb{R}}
\newcommand{\N}{\mathbb{N}}
\newcommand{\F}{\mathbb{F}}

\theoremstyle{plain}
\newtheorem{thm}{Theorem}[section]
\newtheorem*{thm*}{Theorem}
\newtheorem{lemma}[thm]{Lemma}
\newtheorem{prop}[thm]{Proposition}

\newtheorem{question}[thm]{Question}

\theoremstyle{definition}

\begin{document}

\author{Andrew S. Marks}
\thanks{The author is partially supported by the National Science Foundation under DMS-1204907, and the John Templeton foundation under Award No.\
15619.}
\thanks{The author
would also like to thank the Institute for Mathematical Sciences and the
Department of Mathematics of the National University of Singapore and the
John Templeton Foundation for their support to attend the 2012 summer
school in logic, where the main lemma of this paper was conceived.}

\subjclass[2010]{Primary 03E15}
\address{Department of Mathematics, California Institute of Technology}
\email{marks@caltech.edu}
\title{A determinacy approach to Borel combinatorics}

\begin{abstract}

We introduce a new method, involving infinite games and Borel determinacy,
which we use to answer several well-known questions in Borel combinatorics.
\end{abstract}

\maketitle

\section{Introduction}
\label{sec:intro}

A \define{Borel graph} on a standard Borel space $X$ is a symmetric
irreflexive relation $G$ on $X$ that is Borel as a subset of $X \times X$.
We call elements of $X$ vertices, and if $x,y \in X$ and $x \G y$ then we
say that $x$ and $y$ are \define{neighbors}, or are \define{adjacent}. The
\define{degree} of a vertex is its number of neighbors, and a
graph is said to have \define{degree $\leq n$} if each of its vertices has
degree $\leq n$. A
graph is said to be \define{regular} if all of its vertices have the same
number of neighbors, and is \define{$n$-regular} if this number is $n$.

Graph coloring is a typical problem studied in the field of Borel
combinatorics, where a \define{Borel coloring} of a Borel graph $G$ on $X$ is a
Borel function $c: X \to Y$ from the vertices of $G$ to a standard Borel space $Y$
such that if $x \G y$, then $c(x) \neq c(y)$. The \define{Borel chromatic
number} $\chi_B(G)$ of $G$ is the least cardinality of a standard Borel
space $Y$ such that $G$ has a Borel coloring with codomain $Y$. The first
systematic study of Borel chromatic numbers was done by Kechris,
Solecki, and Todorcevic~\cite{MR1667145}. 
Since then, fruitful connections have been found between the study of
Borel chromatic numbers and other areas of mathematics such as ergodic
theory and dynamics~\cite{MR3019078, ConleyKechrisTuckerDrob}, and dichotomies in descriptive
set theory~\cite{MillerBull}.

If $G$ is a Borel graph, then it is clear that $\chi(G) \leq \chi_B(G)$,
where $\chi(G)$ is the usual chromatic number of $G$. However, $\chi(G)$
and $\chi_B(G)$ may differ quite wildly. For instance, Kechris,
Solecki, and Todorcevic~\cite{MR1667145} show the existence of an
acyclic Borel graph $G_0$
(so $\chi(G_0) = 2$) for which $\chi_B(G_0) = 2^{\aleph_0}$. Nevertheless, in some
respects the Borel chromatic number of a graph is quite similar
to the usual chromatic number. For example, we have the following analogue
of an obvious classical fact:

\begin{thm}[Kechris, Solecki, and Todorcevic~\cite{MR1667145}]\label{KST_ncoloring}
  If $G$ is a Borel graph of degree $\leq n$,
then $\chi_B(G) \leq n + 1$. 
\end{thm}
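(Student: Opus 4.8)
The classical fact behind Theorem~\ref{KST_ncoloring} is the greedy bound: a graph of degree $\le n$ is $(n+1)$-colorable because, processing the vertices one at a time, each vertex has at most $n$ already-colored neighbors and hence some color among $\{0,1,\dots,n\}$ is free. The obstacle to importing this argument verbatim is that there is in general no Borel well-ordering of $X$ along which to run the greedy procedure. The plan is to replace the well-ordering by a sequence of Borel \emph{maximal independent sets}, peeling off one independent set at a time, and to use the bounded-degree hypothesis to force the process to terminate after $n+1$ rounds.

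Concretely, I would first isolate the key lemma that every Borel graph $G$ of degree $\le n$ (indeed, any locally finite Borel graph) admits a Borel maximal independent set: a Borel $I \subseteq X$ that is $G$-independent and maximal with this property, equivalently an independent set such that every vertex outside $I$ has a neighbor in $I$. Granting this, the theorem follows by a short induction. Put $G_0 = G$, $X_0 = X$, and choose a Borel maximal independent set $I_0 \subseteq X_0$; set $X_1 = X_0 \setminus I_0$ and $G_1 = G_0 \restrict X_1$. The crucial observation is that the degree drops: if $x \in X_1$ then, by maximality of $I_0$, $x$ has a neighbor in $I_0$, which has been deleted, so $\deg_{G_1}(x) \le n-1$. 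Iterating, at stage $k$ I choose a Borel maximal independent set $I_k$ in the Borel graph $G_k$, which has degree $\le n-k$, and pass to $G_{k+1} = G_k \restrict (X_k \setminus I_k)$, of degree $\le n-k-1$. In particular $G_n$ has no edges, so $I_n = X_n$ and the process stops with $X = I_0 \cup I_1 \cup \dots \cup I_n$, a partition into $n+1$ Borel independent sets. Defining $c(x)$ to be the unique $k \le n$ with $x \in I_k$ gives a Borel coloring $c : X \to \{0,\dots,n\}$, and since each $I_k$ is independent, adjacent vertices receive different colors; hence $\chi_B(G) \le n+1$.

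The main obstacle is thus the key lemma: producing the maximal independent set while remaining Borel. A naive approach — fix a Borel injection $\iota$ of $X$ into Cantor space and put into $I$ every vertex that is $\iota$-minimal among its neighbors — yields a Borel independent set, but it can fail to be maximal, since an infinite $\iota$-descending path of adjacent vertices leaves an entire region undominated. The plan is instead to apply the Luzin--Novikov uniformization theorem: because every vertical section of $G \subseteq X \times X$ is finite, and in particular countable, we may write $G = \bigcup_k \mathrm{graph}(f_k)$ for countably many Borel partial functions $f_k : X \to X$, so that the neighbors of any $x$ are Borel-enumerated by $\{f_k(x)\}$. Using this enumeration one constructs $I$ by a more careful Borel recursion that decides membership vertex by vertex; this is the standard existence theorem for Borel maximal independent sets in locally countable Borel graphs. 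Arranging that membership in $I$ is determined in a genuinely Borel (rather than merely transfinite) way, and that the outcome is maximal rather than only independent, is the part I expect to demand the most care. Once the lemma is in place, the remaining content of the theorem is just the pigeonhole inequality $n < n+1$, wrapped in the $(n+1)$-step induction above.
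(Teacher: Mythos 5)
This theorem is not proved in the paper at all: it is quoted as a black box from Kechris, Solecki, and Todor\v{c}evi\'c \cite{MR1667145}, so there is no in-text argument to compare yours against. Your proposal is nonetheless a correct proof, organized differently from the argument in \cite{MR1667145}. There, one first shows (via Luzin--Novikov) that every locally countable Borel graph admits a Borel $\aleph_0$-coloring $c$, and then recolors greedily along the color classes: processing $c^{-1}(0), c^{-1}(1), \dots$ in order, each vertex has at most $n$ previously recolored neighbors and so receives some color in $\{0,\dots,n\}$, with no conflicts inside a class since each $c^{-1}(k)$ is independent. Your route instead peels off Borel maximal independent sets $I_0,\dots,I_n$, using maximality to drop the degree by one at each stage; that induction is airtight. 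The two approaches are essentially equivalent in strength, because the maximal-independent-set lemma you isolate is itself most cleanly derived from the $\aleph_0$-coloring rather than directly from the Luzin--Novikov decomposition: take $I = \bigcup_k J_k$ where $J_0 = c^{-1}(0)$ and $J_{k+1}$ consists of the vertices of $c^{-1}(k+1)$ with no neighbor in $J_0 \cup \dots \cup J_k$. This settles the worry you raise about making the ``vertex by vertex'' recursion genuinely Borel, since the recursion then runs along the countably many color classes rather than along the vertices. With that substitution, the only unproved ingredient in your write-up is the same one \cite{MR1667145} also needs as a prior result, namely the Borel $\aleph_0$-colorability of locally countable Borel graphs.
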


We will be interested in Borel graphs that arise from free Borel actions of
countable marked groups. Recall that a \define{marked group} is a group
with a specified set of generators. We assume throughout this paper that
the set of generators of a group does not include the identity. 
Let $\Gamma$ be a countable discrete group, and $X$ be a
standard Borel space. We endow the space $X^\Gamma$ of functions from
$\Gamma$ to $X$ with the usual product Borel structure (arising from the
product topology) so that $X^\Gamma$ is also
a standard Borel space. The \define{left shift action} of $\Gamma$ on
$X^\Gamma$ is defined by $\alpha \cdot y(\beta) = y(\alpha^{-1} \beta)$ for
$y \in X^\Gamma$ and $\alpha, \beta \in \Gamma$. The \define{free part} of this
action, denoted $\Free(X^{\Gamma})$, is the set of $y \in X^{\Gamma}$ such that $\gamma \cdot y \neq
y$ for all nonidentity $\gamma \in \Gamma$. Now we define $G(\Gamma,X)$ to
be the Borel graph on $\Free(X^{\Gamma})$ where for $x, y \in
\Free(X^{\Gamma})$, we have 
$x
\mrel{G(\Gamma,X)} y$ if there is a generator $\gamma \in \Gamma$ such that
$\gamma \cdot x = y$ or $\gamma \cdot y = x$. Hence, each connected
component of $G(\Gamma,X)$ is an isomorphic copy of the Cayley graph of
$\Gamma$. We will only be interested in $G(\Gamma,X)$ when $\Gamma$ is
finitely generated; an easy Baire category argument shows that if $\Gamma$
has infinitely many generators, then $\chi_B(G(\Gamma,2)) = 2^{\aleph_0}$
(see \cite{MR1667145}).

If $\Gamma$ is a marked countable group, 
then $G(\Gamma,\N)$ attains the maximum Borel chromatic number of all
graphs generated by a free Borel action of $\Gamma$. That is, suppose
we have any free Borel action of $\Gamma$ on a standard Borel space $X$, to
which we associate the Borel graph $G^X_\Gamma$ on $X$ where $x
\G^X_\Gamma
y$ if there is a generator $\gamma$ of $\Gamma$ such that $\gamma \cdot x =
y$ or $\gamma \cdot y = x$. Then $\chi_B(G^X_\Gamma) \leq
\chi_B(G(\Gamma,\N))$. This is trivial when $\Gamma$ is finite.
When $\Gamma$ is infinite, it follows from 
{\cite[Theorem 5.4]{MR1900547}}; since the action of $\Gamma$ on $X$ is
free, the function constructed there will an injective
equivariant function from $X$ into
$\Free(\N^\Gamma)$. Recall that if $\Gamma$ acts on the spaces
$X$ and $Y$, then a function $f: X \to Y$ is said to be
\define{equivariant} if for all $\gamma \in \Gamma$ we have that $\gamma
\cdot f(x) = f(\gamma \cdot x)$.

Our first result is a theorem describing how the
Borel chromatic number of $G(\Gamma,\N)$ behaves with respect to free products
(see \ref{chromnum_free_product}).
We stipulate that if $\Gamma$ and $\Delta$ are marked groups, then their
free product $\Gamma * \Delta$ is the marked group generated by the union
of the generators of $\Gamma$ and $\Delta$. 

\begin{thm}\label{free_product_chromnum_intro}
  If $\Gamma$ and $\Delta$ are finitely generated marked groups,
  then 
  \[\chi_B(G(\Gamma*\Delta,\N)) \geq \chi_B(G(\Gamma,\N)) +
  \chi_B(G(\Delta,\N))
  -1\]
\end{thm}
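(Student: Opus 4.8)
The plan is to argue by contradiction via Borel determinacy. Suppose $c$ is a Borel coloring of $G(\Gamma*\Delta,\N)$ with color set $d$, and assume for contradiction that $|d| \le m+n-2$, where $m = \chi_B(G(\Gamma,\N))$ and $n = \chi_B(G(\Delta,\N))$. Since $(m-1)+(n-1) = m+n-2 \ge |d|$, I can fix a partition $d = A \sqcup B$ with $|A| \le m-1$ and $|B| \le n-1$. The goal is then to use $c$ to produce either a Borel coloring of $G(\Gamma,\N)$ that uses only colors from $A$ (hence at most $m-1 < m$ colors) or a Borel coloring of $G(\Delta,\N)$ using only colors from $B$ (hence at most $n-1 < n$ colors); either outcome contradicts the definition of $m$ or of $n$. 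The arithmetic $(m-1)+(n-1)=m+n-2$ is exactly what makes the existence of the partition $A\sqcup B$ possible, and this is where the ``$-1$'' in the theorem comes from.

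To decide, in a Borel way, which factor to recolor and how, I would associate to $c$ and the partition $(A,B)$ a two-player infinite game $\mathcal{G}$. Exploiting the normal-form structure of the free product --- every connected component of $G(\Gamma*\Delta,\N)$ is an alternating tree of $\Gamma$-cosets and $\Delta$-cosets --- the two players alternately build a point $y$ of $\Free(\N^{\Gamma*\Delta})$, revealing its coordinates a step at a time, with Player I steering the $\Gamma$-direction and Player II the $\Delta$-direction. The win condition is read off from the values of $c$ along the produced point, comparing them against $A$ and $B$. Because $c$ is Borel, the payoff set is Borel, so Borel determinacy applies and one of the two players has a winning strategy.

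The point of the game is that a winning strategy for the $\Gamma$-player --- which is itself a Borel object --- can be played against the left-shift action to define, shift-equivariantly and uniformly across all components, a Borel coloring of the $\Gamma$-factor that only ever outputs colors in $A$; symmetrically, a winning strategy for the $\Delta$-player yields a Borel $B$-coloring of the $\Delta$-factor. Such colorings first arise on a universal space (the game naturally builds labels in a space such as $2^{<\omega}$), and I would transfer them to $G(\Gamma,\N)$ and $G(\Delta,\N)$ using the universality of $G(\Gamma,\N)$ noted above, via the equivariant injection of any free action into $\Free(\N^\Gamma)$ from \cite{MR1900547}*{Theorem 5.4}. Whichever player wins, the resulting coloring uses at most $m-1$ or at most $n-1$ colors, the desired contradiction.

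The main obstacle is the faithful extraction step: converting a single winning strategy into a globally consistent, genuinely Borel \emph{proper} coloring of a factor graph. This requires (i) defining the game so that its payoff is Borel and so that a win literally certifies a proper coloring --- that adjacent vertices never receive a common color --- rather than merely a partial or local constraint; and (ii) checking that the strategy localizes, i.e. that the coloring it induces at a vertex depends Borel-measurably and shift-equivariantly on the point, so that the construction descends to the quotient graph. Verifying that the color budget splits exactly as $|A|$ and $|B|$, so that the shared junction vertex between a $\Gamma$-coset and a $\Delta$-coset is accounted for only once, is the delicate bookkeeping underlying the bound, and designing the game to encode precisely this overlap is the technical heart of the argument.
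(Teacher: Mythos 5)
Your outline is the same route the paper takes: assume a coloring with too few colors, split the palette into two blocks sized against $\chi_B(G(\Gamma,\N))-1$ and $\chi_B(G(\Delta,\N))-1$, and use a determinacy argument on games in which the two players build a point of $\N^{\Gamma*\Delta}$ along the normal-form decomposition of $\Gamma*\Delta$ to push one factor equivariantly into one block. Indeed the paper isolates exactly this as Lemma~\ref{main_lemma}, and the proof of Theorem~\ref{chromnum_free_product} is then the three lines you describe. However, your write-up defers precisely the parts that constitute the actual proof, and one of the things you do assert is backwards.

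First, the direction of extraction: to build an equivariant map $f:\Free(\N^\Gamma)\to\Free(\N^{\Gamma*\Delta})$ you must dictate the $\Gamma$-coordinates of $f(x)$ yourself from the input $x$, so it is the \emph{opponent's} (the $\Delta$-player's) winning strategy that you play against to fill in the $\Delta$-words and force the outcome into $A$; a winning strategy for the $\Gamma$-player instead yields the map out of $\Free(\N^\Delta)$ into $\comp{A}$. Second, the genuine gaps. (i) Nothing in your game forces the outcome $y$ to lie in $\Free(\N^{\Gamma*\Delta})$ rather than merely $\N^{\Gamma*\Delta}$; the paper flags this as the main difficulty and handles it with the auxiliary invariant set $Y$, a win condition phrased in terms of which player is ``first to blame'' for a failure of freeness, and a combinatorial partition lemma (Lemma~\ref{partition_non_independent}) producing a set $C\subset Y\setminus\Free(\N^{\Gamma*\Delta})$ that is adjoined to $A$ to guarantee the final range avoids the non-free part. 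None of this is present in your sketch, and without it a winning strategy only certifies membership in $A$ for free outcomes, which the strategy need not produce. (ii) You do not explain how the value $y(e)$ is assigned or why $f$ is injective; the paper runs a separate game $G^B_k$ for each prescribed value $k$ of $y(e)$, applies the pigeonhole principle to get infinitely many $k$ won by the same player, and sets $f(x)(\gamma)=k(x(\gamma))$, which is what makes $f$ injective and well-defined at the identity coordinate. (iii) The scheduling of which coordinates are defined on which turn (the turn function $t$) is needed so that player I's moves in the game for $\gamma\cdot f(x)$ are already determined by earlier moves in the games for other group elements; this consistency check is what makes the simultaneous play coherent and equivariant. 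Finally, your appeal to the universality of $G(\Gamma,\N)$ to ``transfer'' the coloring is unnecessary: the game directly produces a map defined on $\Free(\N^\Gamma)$ itself, and composing with $c$ finishes the argument. As written, your proposal is a correct plan with the technical heart acknowledged but not supplied.
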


It has been an open question what Borel chromatic numbers can be attained
by an $n$-regular acyclic Borel graph, and whether the upper bound given by
Theorem~\ref{KST_ncoloring} is optimal for such graphs.
Several prior results
exist along these lines.
For
$2$-regular acyclic graphs, we have that 
$\chi_B(G(\Z,2)) = 3$ by~\cite{MR1667145}. More recently, Conley and
Kechris~\cite{MR3019078} have shown that for the free group on $n$
generators, $\chi_B(G(\F_n,2)) \geq \frac{n + 2\sqrt{n-1}}{2 \sqrt{n-1}}$,
and Lyons and Nazarov~\cite{MR2825538} have pointed out that results of Frieze and
Luczak~\cite{MR1142268}
imply that $\chi_B(G(\F_n,2)) \geq \frac{n}{
\log 2n}$ for sufficiently large $n$. 

Using Theorem~\ref{free_product_chromnum_intro}, we answer this question and show
that for every $n$ there exists an $n$-regular acyclic Borel graph with
Borel chromatic number equal to $n+1$. Indeed, if $(\Z/2\Z)^{*n}$ is the
free product of $n$ copies of $\Z/2\Z$, then $\chi_B\left( G \left(
(\Z/2\Z)^{*n},\N\right)\right) = n+1$, since 
Theorem~\ref{free_product_chromnum_intro} gives a tight lower bound to the upper
bound of Theorem~\ref{KST_ncoloring}. Similarly, for 
the free group on $n$ generators, we have $\chi_B(G(\F_n,\N)) = 2n + 1$.

Further, we can give a complete description of the Borel
chromatic numbers that can be attained by an $n$-regular acyclic Borel graph; they are exactly
those allowed by Theorem~\ref{KST_ncoloring} (see \ref{possible_chromnum}):

\begin{thm}\label{intro_possible_chromnum}
  For every $n \geq 1$ and every $m \in \{2, \ldots, n+1\}$, there is a
  $n$-regular acyclic Borel graph $G$ with $\chi_B(G) = m$. 
\end{thm}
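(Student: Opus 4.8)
The plan is to construct, for each pair $n \geq 1$ and $2 \le m \le n+1$, a single $n$-regular acyclic Borel graph $G$ with $\chi_B(G) = m$. Two observations frame the construction. First, $\chi_B$ is monotone under passing to Borel subgraphs on the same vertex set: if $H$ is a Borel graph with the edge set of $H$ contained in that of $G$, then any Borel coloring of $G$ is a Borel coloring of $H$, so $\chi_B(G) \ge \chi_B(H)$. Second, iterating Theorem~\ref{free_product_chromnum_intro} from the base value $\chi_B(G(\Z/2\Z,\N)) = 2$ gives $\chi_B(G((\Z/2\Z)^{*k},\N)) = k+1$ for every $k$, with the upper bound coming from Theorem~\ref{KST_ncoloring}. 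Thus $G((\Z/2\Z)^{*(m-1)},\N)$ is an $(m-1)$-regular acyclic Borel graph of Borel chromatic number exactly $m$. The idea is to use this as a \emph{hard core} that forces $\chi_B(G) \ge m$, and to add the remaining $n-(m-1)$ to the degree of every vertex without creating cycles or raising $\chi_B$ above $m$.

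To increase the degree while staying acyclic I would not add edges among the vertices of the core: since each component of the core is a single tree, any new edge inside a component produces a cycle. Instead I would realize $G$ as the graph of a single free Borel action of $\Gamma = (\Z/2\Z)^{*(m-1)} * (\Z/2\Z)^{*d}=(\Z/2\Z)^{*n}$, where $d = n-(m-1)$, assembled as a free product of two actions: on the first $m-1$ involutions the action is a Borel copy of the maximal action of the core $(\Z/2\Z)^{*(m-1)}$, while on the remaining $d$ involutions it is an ``easy'', $2$-colorable action obtained by a parity skew $g \cdot (x,\epsilon) = (g\cdot x,\, \epsilon + \pi(g))$, with $\pi$ the length-parity homomorphism onto $\Z/2\Z$. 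Because a free product of free actions is free, each connected component of $G$ is a copy of the Cayley graph of $\Gamma$, namely the $n$-regular tree; hence $G$ is $n$-regular and acyclic. Moreover, by construction the action restricts on the core-subgroup orbits to the maximal $(\Z/2\Z)^{*(m-1)}$-action, so those orbits form a Borel subgraph that is a disjoint union of copies of $G((\Z/2\Z)^{*(m-1)},\N)$; by the two framing observations this already gives $\chi_B(G) \ge m$, and this lower bound is insensitive to how the added generators act.

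It remains to produce a Borel $m$-coloring of $G$, which is the heart of the argument and the step I expect to be hardest. An $m$-coloring is not automatic from acyclicity: bounded-degree Borel forests can have Borel chromatic number as large as Theorem~\ref{KST_ncoloring} permits (already $\chi_B(G(\Z,\N)) = 3$), so the coloring must exploit the specific skew structure rather than appeal to the tree shape. The naive approach---color the core with its $m$-coloring $c_0$ and patch in the parity bit $\epsilon$ that handles the added edges---fails, because across an added generator $s_i$ the values $c_0(x)$ and $c_0(s_i\cdot x)$ are uncontrolled, so no layerwise permutation of the palette can make the added edges proper while keeping the core edges proper. The crux is to color the ``easy'' directions by reusing the core's palette $\{1,\dots,m\}$, using the Borel parity coordinate together with the Bass--Serre tree structure of the free product to propagate colors consistently; I expect that pinning the global value at exactly $m$, rather than somewhere between $m$ and $n+1$, will require the determinacy-based main lemma underlying Theorem~\ref{free_product_chromnum_intro}, applied to the assembled action in place of the maximal one, and possibly a tuning of the ``easy'' factor.

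Finally, the two extreme cases anchor the scheme and serve as sanity checks: $m = n+1$ is $G((\Z/2\Z)^{*n},\N)$ itself, and $m = 2$ is the pure parity skew of any free $(\Z/2\Z)^{*n}$-action, which is $n$-regular, acyclic, and properly $2$-colored by $\epsilon$. Once each intermediate value is realized by some graph, disjoint unions behave well---a disjoint union of $n$-regular acyclic graphs is again $n$-regular and acyclic with $\chi_B$ equal to the maximum of the summands---so it is enough to realize each value of $m$ once. The load-bearing step is thus the construction and coloring of the intermediate graphs described above.
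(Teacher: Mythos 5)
There is a genuine gap, and it is exactly where you flag it: for $m \le n$ the entire content of the theorem is the upper bound $\chi_B(G) \le m$, and your proposal does not establish it. You correctly observe that the naive ``core coloring plus parity bit'' patch fails because $c_0(x)$ and $c_0(s_i\cdot x)$ are uncontrolled across an added generator, but the proposed fix is only a speculation that the determinacy lemma will somehow pin the value at $m$ --- and determinacy is the wrong tool here, since Lemma~\ref{main_lemma} produces \emph{lower} bounds on chromatic numbers, not colorings. Indeed, for your specific construction the upper bound may simply be false: already for $m=2$, $n=2$, adjoining a second involution to the matching $G(\Z/2\Z,\N)$ so as to get a $2$-regular acyclic graph generated by a free action of $\Z/2\Z * \Z/2\Z$ typically yields Borel chromatic number $3$, and nothing in the parity-skew setup rules this out. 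A secondary problem is the assertion that ``a free product of free actions is free'': this is false in general (two free $\Z/2\Z$-actions on a two-point set generate a non-free action of $\Z/2\Z * \Z/2\Z$), so even the claim that your $G$ is acyclic and $n$-regular needs an argument you have not supplied.

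The paper sidesteps all of this by \emph{not} adding edges among existing vertices. It keeps the core $G(\Zt^{*(m-1)},\N)$ on its vertex set $X$, adjoins an uncountable standard Borel space $Y$ of fresh vertices, and extends to an $n$-regular acyclic graph $G$ on $X \sqcup Y$ so that distinct core vertices lie in distinct connected components of $G \setminus G(\Zt^{*(m-1)},\N)$. The added graph is then smooth (its connectedness relation is smooth), so the Borel $m$-coloring of the core extends greedily over the attached trees (each of which meets the core in at most one vertex and $m \ge 2$ colors suffice on a tree with one precolored vertex), while the lower bound $\chi_B(G) \ge m$ comes from the subgraph monotonicity you already noted. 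If you want to salvage your approach, you would need to either prove the $m$-colorability of your assembled action directly --- which looks at least as hard as the theorem itself --- or switch to the paper's vertex-adjunction device.
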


In the theorem above, $G$ may be chosen to arise from a free
Borel action of $(\Z/2\Z)^{*n}$. 

Our results above involve graphs of the form $G(\Gamma,\N)$. Answering a
question originally posed in an early version of this paper, Seward and Tucker-Drob~\cite{1402.4184} have shown that for all marked groups $\Gamma$, and all $n \geq
2$, we have $\chi_B(G(\Gamma,\N)) = \chi_B(G(\Gamma,n))$. Hence, our
results apply to graphs of the form $G(\Gamma,2)$ as well.

Next, we turn to Borel edge colorings. Let $G$ be a Borel graph on a
standard Borel space $X$. If $x, y \in X$ then we say the set $\{x,y\}$ is an edge of $G$
if $x \G y$. The \define{line graph} $\check{G}$ of $G$
is the graph whose vertices are the edges of $G$, and where distinct
$\{x,y\}$ and $\{z,w\}$ are adjacent if $\{x,y\}
\inters \{z,w\} \neq \emptyset$. A \define{Borel edge coloring} of $G$ is
defined to be a Borel coloring of $\check{G}$. The \define{Borel edge
chromatic number} of a Borel graph $G$, denoted $\chi_B'(G)$, is the Borel
chromatic number of its line graph. 

It is a classical theorem of Vizing (see e.g.\ {\cite[Theorem
5.3.2]{MR2159259}}) that every $n$-regular
graph
has an edge coloring with $n+1$ colors. Kechris, Solecki and
Todorcevic have asked if the analogous fact is true for $n$-regular
Borel
graphs~{\cite[page 15]{MR1667145}}. More recently, this question has
attracted some interest from the study of graph limits
\cite{MR2644905} {\cite[Remark 3.8]{1205.4356}}.
We show that this question has a negative answer, and we calculate exactly what Borel edge
chromatic numbers can be attained by an $n$-regular Borel graph. Note that
if $G$ is an $n$-regular Borel graph, then since $\check{G}$ is $2n-2$
regular, we see that $\chi_B(\check{G}) \leq 2n-1$ by
Theorem~\ref{KST_ncoloring}. We show that this obvious upper bound can be
achieved, even using acyclic and Borel bipartite graphs (see
\ref{edge_coloring_a}). Recall that a
\define{Borel bipartite graph} is a Borel graph $G$ on $X$ for which there is a
partition of $X$ into two Borel sets $A$ and $B$ such that if $x \G y$, then
either $x \in A$ and $y \in B$, or $x \in B$ and $y \in A$. 

\begin{thm}\label{edge_coloring}
  For every $n \geq 1$ and every $m \in \{n, \ldots, 2n-1\}$, there is an 
  $n$-regular acyclic Borel bipartite graph $G$ such that
  $\chi'_B(G) = m$.
\end{thm}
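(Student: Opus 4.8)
The plan is to realize all the required edge chromatic numbers by constructing, for each target value $m \in \{n, \ldots, 2n-1\}$, an $n$-regular acyclic bipartite Borel graph whose dual graph is essentially of the form $G(\Gamma, \N)$ for a suitable marked group $\Gamma$, and then to apply the free product lower bound (Theorem~\ref{free_product_chromnum_intro}) together with the upper bound from Theorem~\ref{KST_ncoloring}. The key observation is that an edge coloring of a graph $G$ is a vertex coloring of its dual $\check{G}$, so I want to arrange matters so that $\check{G}$ is itself a graph generated by a free action of a computable marked group. The natural candidate is to take $G$ to be the graph generated by a free action of a free product of copies of $\Z/2\Z$: if $\Gamma = (\Z/2\Z)^{*n}$ acts freely, then each vertex has exactly $n$ neighbors (one along each involutive generator), the graph is acyclic since free products of $\Z/2\Z$ have the free product Cayley tree structure, and it is bipartite because one can $2$-color vertices by the parity of word length in the generators.

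First I would make precise the relationship between edge colorings of $G(\Gamma,\N)$ and vertex colorings of an associated shift graph. The dual graph of an $n$-regular graph is $(2n-2)$-regular, which by Theorem~\ref{KST_ncoloring} gives the upper bound $\chi_B'(G) \le 2n-1$; the content of the theorem is achieving each value down to $n$. To get the lower bounds I want to identify $\check{G}$, up to Borel isomorphism on its free part, with a graph $G(\Delta, \N)$ for some marked group $\Delta$ built as a free product, so that I can invoke Theorem~\ref{free_product_chromnum_intro} inductively. The main technical step is to compute the dual of the free-product graph: for $\Gamma_1 * \cdots * \Gamma_k$, the edges incident to a given vertex are indexed by the generators, and two edges are dual-adjacent exactly when they share a vertex, so the local structure of $\check{G}$ at each vertex splits according to which of the two endpoints is shared. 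I would show that this dual graph decomposes as a graph generated by a free product whose factors are controlled by the vertex degrees in the two bipartition classes.

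To realize an arbitrary $m$ in the range, I would interpolate between the two extremes. The value $m = 2n-1$ is attained by the graph whose dual is $G(\F_n, \N)$-like with maximal chromatic number, using that $\chi_B(G(\F_n,\N)) = 2n+1$ as computed in the discussion following Theorem~\ref{free_product_chromnum_intro}; the value $m = n$ (the ``class 1'' case, matching Vizing's classical lower bound) should be attained by a graph admitting a genuine Borel proper edge $n$-coloring, for instance by taking $G$ to be a disjoint-union-like or product construction where the $n$ generators can be globally assigned distinct colors. For the intermediate values I would take free products $\Gamma_1 * \cdots * \Gamma_k$ where the factors are chosen among $\Z/2\Z$ and $\Z$ (and possibly finite cyclic groups) so that the free-product formula for the Borel chromatic number of the dual, obtained by iterating Theorem~\ref{free_product_chromnum_intro}, sums to exactly $m$. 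The bookkeeping is to verify that each chosen $\Gamma$ yields a graph $G$ that is simultaneously $n$-regular, acyclic, and bipartite, while its dual $\check{G}$ has Borel chromatic number precisely $m$.

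The hard part will be establishing the \emph{lower} bounds on $\chi_B'(G)$, i.e.\ showing that the dual graphs cannot be colored with fewer colors; the upper bounds are comparatively routine since they follow from Theorem~\ref{KST_ncoloring} or from exhibiting an explicit Borel coloring. The crux is to present $\check{G}$ in a form to which Theorem~\ref{free_product_chromnum_intro} applies, which requires correctly identifying the marked-group free-product structure of the dual graph and checking that the resulting factors contribute the right summands to the chromatic-number lower bound. A secondary subtlety is maintaining bipartiteness and acyclicity of $G$ throughout, since the free product of the factors must have a tree Cayley graph (forcing each factor's Cayley graph to be a tree or edge) and an appropriate parity coloring; I expect this to constrain the factors to involutions and infinite-cyclic generators, which is exactly the regime where the free-product chromatic number computation is cleanest.
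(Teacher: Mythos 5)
Your strategy works for the two endpoint values but has a genuine gap at the intermediate ones. For $m=n$ the generator-labelling of the edges of $G(\Zt^{*n},\N)$ is already a Borel proper edge $n$-coloring, and for $m=2n-1$ your identification of the dual with a free-product shift graph is essentially the paper's route: the paper uses the intersection graph of the $E_{\Z/n\Z}$- and $E_{\Z/n\Z}$-classes on $\Free(\N^{\Z/n\Z * \Z/n\Z})$, whose dual \emph{is literally} $G(\Z/n\Z * \Z/n\Z,\N)$ with every nonidentity element as a generator (Theorem~\ref{maximal_edge}, proved there via Theorem~\ref{nodcs} rather than via Theorem~\ref{free_product_chromnum_intro}, but both rest on the Main Lemma). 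Note that if you instead start from $G(\Zt^{*n},\N)$ you still owe a Borel homomorphism from $\Free(\N^{\Z/n\Z*\Z/n\Z})$ into its edge space; an abstract component-wise isomorphism of the line graph with a Cayley graph is not enough to transfer the lower bound.

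The gap is the interpolation for $n<m<2n-1$. First, the dual of an $n$-regular acyclic graph has a rigid shape --- it is a tree-like union of $n$-cliques with each dual vertex in exactly two of them --- so you cannot realize materially different free products $\Gamma_1*\cdots*\Gamma_k$ as duals while keeping $G$ $n$-regular, acyclic and bipartite; there is no freedom to ``tune'' the factors. Second, even granting such realizations, Theorem~\ref{free_product_chromnum_intro} only gives \emph{lower} bounds, and for intermediate $m$ the matching upper bound does not come from Theorem~\ref{KST_ncoloring} (which only gives $2n-1$); you would have to exhibit an explicit Borel $m$-edge-coloring of a free-product shift graph, which is exactly the kind of statement the paper records as open (Question~\ref{exceed} on whether the lower bound is ever exceeded, sharpness being known only for the class $\mathcal{C}$). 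The paper's actual interpolation is different and elementary: take the extremal graph $G$ with $\chi'_B(G)=2n-1$, fix a $(2n-1)$-edge-coloring, delete the edges carrying the first $(2n-1)-m$ colors to force $\chi'_B(G\setminus R)=m$ exactly (an $(m-1)$-coloring of $G\setminus R$ would recombine with $R$ into a forbidden $(2n-2)$-coloring of $G$), and then restore $n$-regularity by attaching a \emph{smooth} bipartite piece, which can be edge-colored within the $m\geq n$ available colors. Some such ``delete color classes and pad smoothly'' device is what your proposal is missing.
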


A \define{Borel perfect matching} of a Borel graph $G$ is a Borel subset
$M$ of the edges of $G$ such that every vertex of $G$ is incident to
exactly one edge of $M$. In his 1993 problem list, Miller asked whether
there is a Borel analogue of Hall's theorem for
matchings~{\cite[15.10]{MR1234292}}. 
Laczkovich~\cite{MR947676} showed the existence of a 
$2$-regular Borel bipartite graph with no Borel perfect matching, and this result was
extended to give examples of $n$-regular Borel bipartite graphs with no
Borel perfect matchings by Conley and Kechris~\cite{MR3019078} when $n$
is even. However, the case for odd $n > 1$ had remained open. We obtain the
following (see \ref{matchings_a}):

\begin{thm}\label{matchings}
  For every $n > 1$, there exists an $n$-regular acyclic Borel bipartite
  graph with no Borel perfect matching.
\end{thm}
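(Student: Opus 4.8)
The plan is to deduce this immediately from the edge-coloring result, Theorem~\ref{edge_coloring}, by exploiting the tight relationship between Borel perfect matchings and extremal Borel edge colorings. Fix $n > 1$. First I would invoke Theorem~\ref{edge_coloring} with $m = 2n-1$ to obtain an $n$-regular acyclic Borel bipartite graph $G$ whose Borel edge chromatic number is as large as the obvious bound allows, namely $\chi'_B(G) = 2n-1$. I claim this single graph already witnesses the theorem: it can admit no Borel perfect matching.

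To see this I would argue by contradiction. Suppose $M$ is a Borel perfect matching of $G$. Since $M$ is a Borel set of edges meeting every vertex exactly once, the graph $G \setminus M$ obtained by deleting the edges of $M$ is $(n-1)$-regular, and it clearly remains acyclic, Borel, and bipartite. Its dual graph is then $(2(n-1)-2) = (2n-4)$-regular, so Theorem~\ref{KST_ncoloring} supplies a Borel coloring of that dual with at most $(2n-4)+1 = 2n-3$ colors; in other words $\chi'_B(G \setminus M) \leq 2n-3$. Now color every edge of $M$ with a single fresh color. Because $M$ is a matching no two of its edges share a vertex, so this is a proper Borel edge coloring of the $M$-edges, and combining it with the $(2n-3)$-coloring of $G \setminus M$ produces a Borel edge coloring of all of $G$ using at most $(2n-3)+1 = 2n-2$ colors. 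Hence $\chi'_B(G) \leq 2n-2 < 2n-1$, contradicting the choice of $G$. Therefore $G$ has no Borel perfect matching, as desired.

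The only place any genuine idea is needed is the conceptual observation driving the contradiction: a Borel perfect matching is precisely the obstruction to lowering the Borel edge chromatic number one step below the extreme value $2n-1$. Peeling off a Borel matching drops the regularity by one and, through Theorem~\ref{KST_ncoloring}, caps the edge chromatic number of the remainder at $2n-3$, so a single extra color for the matching always suffices. Everything else is routine verification, namely that $G \setminus M$ inherits acyclicity, bipartiteness, and Borelness from $G$, and that a matching can be monochromatically colored. I do not expect a serious obstacle here, since the real combinatorial work has already been done in establishing Theorem~\ref{edge_coloring}; the hypothesis $n > 1$ is what makes the reduction meaningful, for when $n = 1$ the graph is itself a perfect matching and the argument degenerates, consistent with the fact that every $1$-regular Borel graph trivially possesses a Borel perfect matching.
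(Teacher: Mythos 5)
Your argument is correct, and it is not the paper's argument. The paper proves this (as Theorem~\ref{matchings_a}) by taking $\Gamma=\Delta=\Z/n\Z$, forming the intersection graph of the $E_\Gamma$- and $E_\Delta$-classes on $\Free(\N^{\Gamma*\Delta})$, and showing that a Borel perfect matching $M$ would produce a Borel set $A$ (the points $x$ with $\{x\}=R\inters S$ for some $(R,S)\in M$) such that $A$ and $\comp{A}$ are disjoint complete sections for $E_\Gamma$ and $E_\Delta$, contradicting Theorem~\ref{nodcs} directly. You instead reduce the matching theorem to the edge-coloring theorem: starting from a graph with $\chi'_B(G)=2n-1$, you peel off a hypothetical Borel perfect matching, observe that the dual of the remaining $(n-1)$-regular graph is $(2n-4)$-regular, and apply Theorem~\ref{KST_ncoloring} to get a $(2n-2)$-edge-coloring of $G$, a contradiction. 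There is no circularity here: Theorem~\ref{edge_coloring} is established via Theorem~\ref{maximal_edge} and Theorem~\ref{nodcs} without any appeal to the matching result (the proof of Theorem~\ref{maximal_edge} borrows only the \emph{graph} from Theorem~\ref{matchings_a}, not its conclusion). Your route isolates a clean general principle --- any $n$-regular Borel graph attaining the extremal Borel edge chromatic number $2n-1$ admits no Borel perfect matching --- and in fact your witness is the same graph as the paper's, since the $m=2n-1$ case of Theorem~\ref{edge_coloring} is exactly the graph of Theorem~\ref{maximal_edge}. What the paper's more direct argument buys is the identification of the underlying obstruction as the failure of disjoint complete sections, which is precisely what Section~4 exploits to show the result fails in the measure and category settings; your derivation hides that obstruction one layer deeper. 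One small point worth making explicit in your write-up: the regularity computation for the dual graph uses that $G$ is a simple graph (no multiple edges), which is automatic here since the graphs are acyclic.
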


Some positive results on measurable matchings have recently been obtained by Lyons and
Nazarov~\cite{MR2825538}. Among their results, they show that the graph we use to prove
the case $n=3$ in 
Theorem~\ref{matchings} has a Borel matching modulo a null set with respect
to a natural measure. 
Further work on matchings in the measurable context has been done by 
Csoka and Lippner~\cite{1211.2374}. The measurable analogue of
Theorem~\ref{matchings} for odd $n$ remains open.

Both Theorems~\ref{edge_coloring} and \ref{matchings} are corollaries of
the following result on Borel disjoint complete sections (see \ref{nodcs}). Suppose $X$ is a standard
Borel space, and $E$ is an equivalence relation on $X$. Then a
\define{complete section} for $E$ is a set $A \subset X$ that meets every
equivalence class of $E$. Now suppose that $F$ is also an equivalence
relation on $X$. Then say that $E$ and $F$ have \define{Borel disjoint
complete sections} if there exist disjoint Borel sets $A, B \subset X$ such
that $A$ is a complete section for $E$ and $B$ is a complete section for
$F$. 

\begin{thm}\label{nodcs_intro}
  Let $\Gamma$ and $\Delta$ be countable groups. Let $E_\Gamma$ be the
  equivalence relation on $\Free(\N^{\Gamma * \Delta})$ where $x \E_\Gamma
  y$ if there exists a $\gamma \in \Gamma$ such that $\gamma \cdot x = y$.
  Define $E_\Delta$ analogously. Then $E_\Gamma$ and $E_\Delta$ do not have
  Borel disjoint complete sections.
\end{thm}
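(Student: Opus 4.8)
The plan is to argue by contradiction, reducing the statement to the determinacy of a Borel game. Suppose toward a contradiction that $A, B \subseteq \Free(\N^{\Gamma*\Delta})$ are disjoint Borel sets with $A$ a complete section for $E_\Gamma$ and $B$ a complete section for $E_\Delta$. We may assume $\Gamma$ and $\Delta$ are both nontrivial: if, say, $\Gamma$ is trivial then $E_\Gamma$ is equality, so a complete section for $E_\Gamma$ must be all of $\Free(\N^{\Gamma*\Delta})$, leaving no room for a nonempty disjoint $B$.

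I would build an infinite two--player game in which Players I and II cooperatively construct a point $x \in \Free(\N^{\Gamma*\Delta})$ together with an increasing sequence of group elements $e = g_0, g_1, g_2, \ldots$. On round $n$ the active player right--multiplies the current element by a nonidentity element of her own factor (Player I by an element of $\Gamma$ on even rounds, Player II by an element of $\Delta$ on odd rounds) to form $g_{n+1}$, and in addition both players assign natural--number values to elements of $\Gamma*\Delta$, subject to a bookkeeping convention guaranteeing that the final labelling $x \colon \Gamma*\Delta \to \N$ is injective; an injective $x$ is automatically a free point. Writing $p_n = g_n^{-1}\cdot x$ for the associated reference points, a direct shift computation gives $p_0 = x$ and $p_{n+1} = h^{-1}\cdot p_n$, where $h$ is the element played on round $n$; hence $p_{n+1} \E_\Gamma p_n$ on Player I's rounds and $p_{n+1} \E_\Delta p_n$ on Player II's rounds, and as the active player ranges over her factor, $p_{n+1}$ ranges over the entire $E_\Gamma$-- (respectively $E_\Delta$--) class of $p_n$. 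The payoff is defined from $A$ and $B$: the constructed point (equivalently, a designated reference translate) is tested for membership in $A$ versus $B$, and since $A$ and $B$ are Borel and the map from runs to the constructed point is continuous, this payoff is Borel.

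By Martin's Borel determinacy theorem one of the players has a winning strategy. The contradiction is then extracted by a copying argument that exploits the symmetry $\Gamma*\Delta \cong \Delta*\Gamma$ (which interchanges the roles of the two factors and of $A$ and $B$) together with the completeness of the section controlled by the passive player. Given a winning strategy, I would run it against the ``mirror'' strategy obtained from the symmetry, feeding each player's moves to the other; because the Cayley graph of a free product is tree--like, the $\Gamma$--subtree hanging off one side and the $\Delta$--subtree hanging off the other are independent, so the demands the strategy places on the $\Gamma$--coordinates never conflict with the completeness--driven demands on the $\Delta$--coordinates, and the two interleaved plays determine a single coherent point $x^*$. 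One game certifies $x^* \in A$ and the mirror game certifies $x^* \in B$, contradicting $A \cap B = \emptyset$.

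The main obstacle is the precise design of the payoff so that a winning strategy for \emph{either} player yields the contradiction, and the verification that the copying argument produces a genuine point of $\Free(\N^{\Gamma*\Delta})$ rather than merely a consistent partial object. The freeness is handled by the injectivity bookkeeping, and the consistency of the interleaving is exactly where the free--product (as opposed to direct--product) hypothesis is essential; the remaining delicate point is to arrange the reference translates so that the membership certified along the $\Gamma$--moves and the membership certified along the $\Delta$--moves refer to one and the same point, which is what forces the clash in $A \cap B$.
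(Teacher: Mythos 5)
Your high-level instinct --- a Borel game in which the two players are responsible for the two factors of $\Gamma*\Delta$, Martin's theorem, and then a combination of copies of the winning strategy --- is exactly the engine behind the paper's proof (the theorem there is a two-line corollary of a ``main lemma'' proved this way, and your reduction to nontrivial $\Gamma,\Delta$ is fine). But the specific design has gaps I do not think can be repaired as stated. The most serious is the shape of the intended contradiction: you want a single point $x^*$ certified to lie in both $A$ and $B$. No game can certify that a particular point belongs to a complete section, because the hypothesis that $A$ meets every $E_\Gamma$-class has no pointwise content. The contradiction must run the other way: design the payoff so that a winning strategy for one player yields an entire $E_\Gamma$-class contained in $\comp{A}$ (killing ``$A$ is a complete section for $E_\Gamma$''), while a winning strategy for the other yields an entire $E_\Delta$-class contained in $A\subseteq\comp{B}$ (killing ``$B$ is a complete section for $E_\Delta$''). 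Producing a whole class means producing an \emph{equivariant} map, i.e.\ a point all of whose $\Gamma$-translates (say) are winning outcomes, and this is where your mirror step breaks: there is no symmetry of the data interchanging $A$ and $B$ (they are arbitrary disjoint Borel sets), and playing one strategy against a reflected copy yields only two interleaved runs. What is needed is to play $\Gamma$-many copies of the strategy simultaneously, one for each $\gamma\in\Gamma$, with the passive player's moves in the copy indexed by $\gamma$ read off from the active player's moves in the copies indexed by $\gamma\gamma_i^{-1}$; this is consistent precisely because the sets $\gamma W_\Delta$ of translates of $\Delta$-words are pairwise disjoint. Your game, organized around a single path $g_0,g_1,\dots$ through an orbit rather than around the partition of $\Gamma*\Delta\setminus\{e\}$ into $\Gamma$-words (labelled by I) and $\Delta$-words (labelled by II), does not expose this structure, and ``tested for membership in $A$ versus $B$'' does not even partition the runs into two payoff sets, so determinacy does not yet apply.

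A second, subtler gap: the injectivity bookkeeping cannot survive the strategy-combination step. In the combined play, one player's moves in each copy are forced by the other player's moves in different copies, so they cannot simultaneously be required to obey a global freshness rule; distinct copies of the strategy will in general reuse values, and the resulting point need not be injective, hence need not be free. The paper instead drops injectivity, works inside a larger invariant set $Y$ on which only the restrictions to $\Gamma$ and to $\Delta$ are required to behave freely, and rigs the payoff with an auxiliary partition (Lemma~\ref{partition_non_independent}) so that the constructed orbit cannot land in $Y\setminus\Free(\N^{\Gamma*\Delta})$. Without something playing that role, the coherence of your interleaved plays does not give you a genuine point of $\Free(\N^{\Gamma*\Delta})$.
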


Theorems~\ref{free_product_chromnum_intro}-\ref{nodcs_intro} above all
follow from a single lemma which we prove in Section~\ref{sec:main}.
Unusually for the subject, this lemma is proved using a direct application
of Borel determinacy. 
Borel determinacy is the theorem, due to
Martin~\cite{MR0403976}, that there is a winning strategy for one of the
players in every infinite two-player game of perfect information with a
Borel payoff set. We will use the determinacy of a class of games for
constructing functions from free products of countable groups to $\N$.
Thus, we are also interested in differences between the results proved
using our new technique, and what can be shown using more standard tools
such as measure theory and Baire category, which have been a mainstay of
proofs in Borel combinatorics. 

Here, Theorem~\ref{nodcs_intro} provides a nice contrast because it is not
true in the context of measure or category, except for the single case
where $\Gamma = \Delta = \Z/2\Z$. Indeed, we have the following more
general theorem (see \ref{dcs_equiv_1}). Recall that a
\define{countable Borel equivalence relation} on a standard Borel space $X$ is an equivalence relation on $X$ that is Borel as a
subset of $X \times X$ and whose equivalence classes are countable.
$E_\Gamma$ and $E_\Delta$ in Theorem~\ref{nodcs_intro} are examples of
countable Borel equivalence relations. 

\begin{thm}\label{intro_dcs_measure_cat}
Suppose $E$ and $F$ are countable Borel equivalence relations on a standard
Borel space $X$ such that every equivalence class of $E$ has cardinality
$\geq 3$ and every equivalence class of $F$ has cardinality $\geq 2$. Then
$E$ and $F$ have Borel disjoint complete sections modulo a null set or
meager set with respect to any Borel probability measure on $X$ or Polish
topology realizing the standard Borel structure of $X$. 
\end{thm}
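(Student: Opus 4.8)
The plan is to reduce the problem to producing a single Borel set and then to build that set using the structure theory of countable Borel equivalence relations, with the measure (or category) hypothesis entering only to discard a small ``bad'' set. Fix a Borel probability measure $\mu$ on $X$ (the argument for a Polish topology realizing the Borel structure is parallel, with the meager ideal in place of the $\mu$-null ideal), and write $I$ for the resulting $\sigma$-ideal. First I would observe that it suffices to find one Borel set $A$ such that, off an $I$-small set, $A$ meets every $E$-class while $X \setminus A$ meets every $F$-class: one then takes $B = X \setminus A$, and $A, B$ are automatically disjoint. Phrased this way the task is symmetric---we want a Borel $2$-colouring one of whose colour classes is $E$-complete and the other $F$-complete---and it is genuinely impossible in the purely Borel category (this is exactly Theorem~\ref{nodcs_intro} in the case $\Gamma = \Delta = \Zt$), so the ideal $I$ must be used essentially.

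To build $A$, I would split $X$ according to whether the $E$-class is finite or infinite; these pieces are Borel and $E$-invariant. On the finite part $E$ is a finite Borel equivalence relation, hence smooth, so it has a Borel transversal; since every $E$-class there has at least three points I can Borel-select two disjoint representatives per class and still leave a third point over. On the infinite part $E$ is aperiodic, and I would invoke the marker lemma to get a decreasing sequence of Borel $E$-complete sections $A_n$ with $\bigcap_n A_n = \emptyset$. The consequence I want is not a bound on $\mu(A_n)$ (individual classes are typically null, so measure says nothing about class-containment) but the per-point fact that, setting $D_n = \{x : [x]_F \subseteq A_n\}$ for the points whose entire $F$-class is swallowed by $A_n$, the $D_n$ decrease to $\emptyset$, so $\mu(D_n) \to 0$. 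Taking $A$ to be the chosen representatives on the finite part and $A_n$ for large $n$ on the infinite part, I obtain an $E$-complete section in which every $E$-class retains a spare point outside $A$---this spare being precisely what the hypothesis $|[x]_E| \geq 3$ provides, and what is unavailable in the $2$-point case.

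It remains to arrange that $A$ contains no entire $F$-class. For each swallowed $F$-class I would repair $A$ by reassigning the offending $E$-representative to its spare point, freeing a point of that $F$-class into $X \setminus A$ without uncovering its $E$-class. The hard part will be carrying out all these repairs simultaneously and Borel-measurably: a single $F$-class can meet several $E$-classes and a single $E$-class can be implicated in several swallowed $F$-classes, so naive repairs cascade, and one must show that the points where the repair genuinely fails---where an $E$-class would lose every representative---are confined to an $I$-small set, which the marker estimate $\mu(D_n)\to 0$ together with a limiting argument should deliver, and which is then discarded. This is exactly where the asymmetry between $|[x]_E|\geq 3$ and $|[x]_F|\geq 2$ is used and is sharp: with only two points in every $E$-class the two representatives exhaust the class and no spare survives for the repair, which is the obstruction realised by the $\Zt * \Zt$ action of Theorem~\ref{nodcs_intro}. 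The category case follows the same outline, since the marker lemma and the smoothness of finite relations are purely Borel facts and the meager ideal, like the null ideal, permits discarding the vanishing bad set.
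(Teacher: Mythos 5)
Your reduction to a single Borel set $A$ with $A$ an $E$-complete section and $\comp{A}$ an $F$-complete section is fine, and your observation that $D_n = \{x : [x]_F \subseteq A_n\}$ decreases to $\emptyset$ is correct as far as it goes. But the proposal has a genuine gap at its core: the marker-lemma estimate $\mu(D_n) \to 0$ is only available on the part of $X$ where $E$ is aperiodic, whereas the hard case of the theorem is precisely the one where both $E$ and $F$ have finite classes everywhere (for instance $E = E_{\Z/3\Z}$ and $F = E_{\Z/2\Z}$ on $\Free(\N^{\Z/3\Z * \Z/2\Z})$, which by Theorem~\ref{nodcs} is a witness that the pure Borel statement fails with exactly these cardinality hypotheses). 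On that part there is no marker sequence and no quantity tending to $0$, so the ``cascading repairs'' you defer are not a technicality to be absorbed into an $I$-small set --- they are the entire content of the theorem. The paper spends almost all of its effort exactly here: Lemma~\ref{dcs_equiv_1} reduces the finite-class case to the aperiodic independent case through a chain of Borel equivalences (passing to finite subequivalence relations via \cite{MR2095154}*{Proposition 7.4}, forming intersection graphs, and constructing Borel antimatchings via the path-and-component surgery in the proof of (2) $\implies$ (3)), and only then does the measure hypothesis enter, through Lemma~\ref{lemma:measure_dcs}. Even on the aperiodic part, taking ``$A_n$ for large $n$'' only gives the conclusion modulo a set of measure $\epsilon$ rather than modulo null; Lemma~\ref{lemma:measure_dcs} needs an alternating construction producing increasing unions $\bigunion_{n \text{ odd}} \comp{B_n}$ and $\bigunion_{n \text{ even}} \comp{A_n}$ to reach a genuinely conull set, and that interleaving is not supplied by your outline.

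The category case is a second, independent gap: the claim that ``the meager ideal, like the null ideal, permits discarding the vanishing bad set'' fails because the meager ideal has no continuity from above --- a decreasing sequence of Borel sets with empty intersection can consist entirely of nonmeager sets (e.g.\ the intervals $(0,1/n)$ in $\R$), so there is no category analogue of $\mu(D_n) \to 0$. This is why the paper proves the category half by an entirely different route: it cites the Conley--Miller theorem that acyclic $3$-regular Borel graphs admit Borel perfect matchings off an invariant meager set, converts such a matching into an antimatching, and then runs this back through the equivalences of Lemma~\ref{dcs_equiv_1} to reach statement (1).
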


As we will see, the idea of disjoint complete sections turns out to be
surprisingly robust, as evidenced by a large number of equivalent
formulations which we give in Theorems \ref{dcs_equiv_1} and
\ref{dcs_equiv_2}
later in the paper.
Using the existence of disjoint complete sections in the context of measure
and category, we also show the following, which contrasts nicely with
Theorem~\ref{edge_coloring}, and demonstrates that it can not be proved
using measure-theoretic or Baire category techniques (see
\ref{measurable_3regular_edge}):

\begin{thm}\label{thm:measurable_3_regular_edge}
  Suppose $G$ is a $3$-regular Borel bipartite graph on $X$. Then $G$ has a
  Borel edge coloring with $4$ colors modulo a null set or meager set with
  respect to any Borel probability measure on $X$ or Polish topology
  realizing the standard Borel structure of $X$. 
\end{thm}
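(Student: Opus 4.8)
The engine is Theorem~\ref{intro_dcs_measure_cat}: in the measure or category context disjoint complete sections are available, whereas the Borel lower bounds behind Theorem~\ref{edge_coloring} come precisely from their \emph{failure} (Theorem~\ref{nodcs_intro}). So the plan is to reverse the Borel lower-bound argument --- to manufacture a Borel $4$-edge-coloring of $G$ out of a pair of disjoint complete sections for two countable Borel equivalence relations attached to $G$, and then invoke Theorem~\ref{intro_dcs_measure_cat} to supply those sections modulo a null or meager set. Fix the Borel bipartition $X = A \disjointunion B$. Since $G$ is bipartite each edge has a unique $A$-endpoint and a unique $B$-endpoint, so on the set of edges of $G$ (the vertex set of the dual graph $\check G$) there are two natural countable Borel equivalence relations: let $E$ relate edges sharing their $A$-endpoint and $F$ relate edges sharing their $B$-endpoint. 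Because $G$ is $3$-regular, every $E$-class and every $F$-class has size exactly $3$, so the hypotheses of Theorem~\ref{intro_dcs_measure_cat} (classes of size $\geq 3$, resp.\ $\geq 2$) are satisfied.

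The mechanism I would use to turn sections into a coloring is a partial coloring built by hand. Using a fixed Borel linear order on $X$, at each $a \in A$ assign its three incident edges the colors $1,2,3$ bijectively; the resulting Borel map $c$ into $\{1,2,3\}$ is automatically proper at every vertex of $A$, so all defects live at vertices of $B$, where the three incident edges may repeat a color. At a conflicted $B$-star with color pattern $\{x,x,y\}$, recoloring exactly \emph{one} of the two coincident $x$-edges with a fourth color leaves the pattern $\{4,x,y\}$, which is proper; and since color $4$ was never used on the $A$-side, no new conflict is created there. Thus a proper Borel $4$-edge-coloring results as soon as the set $D$ of edges promoted to color $4$ is a Borel matching that picks, in each conflicted $B$-star, exactly one of its two coincident edges. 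By Theorem~\ref{intro_dcs_measure_cat}, the disjoint-complete-section structure for $E$ and $F$ --- and hence (see below) the matching $D$ --- exists after discarding an invariant null or meager set, which is exactly the conclusion of Theorem~\ref{thm:measurable_3_regular_edge}.

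The hard part is the bridge of the previous paragraph: converting the abstract disjoint-complete-section data guaranteed by Theorem~\ref{intro_dcs_measure_cat} into a single Borel matching $D$ that is simultaneously a transversal of the conflicted $B$-stars and a matching on the $A$-side. Classically this is a Hall/K\"onig selection, but measurably it is a Vizing-type augmentation and is the genuine content; identifying the precise pair of equivalence relations and the precise equivalent formulation to use (cf.\ the reformulations of disjoint complete sections alluded to above) is part of the work, as is arranging that no $B$-star is monochromatic under $c$ --- for a single extra color at a vertex cannot repair two coincidences --- which I would expect to fold into the same section-finding step rather than treat separately. One must also check that the exceptional null or meager set can be chosen invariantly and uniformly, so that the coloring is genuinely Borel on its complement for every Borel probability measure and every compatible Polish topology at once. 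Finally it is worth stressing why the measure/category hypothesis is unavoidable: Theorem~\ref{edge_coloring} produces $3$-regular acyclic bipartite $G$ with $\chi'_B(G) = 5$, and by Theorem~\ref{matchings} such $G$ need not admit even one Borel perfect matching, so the argument cannot peel off a matching and finish with Theorem~\ref{KST_ncoloring}; it must instead exploit the strictly weaker disjoint-complete-section structure that only becomes available modulo null or meager sets.
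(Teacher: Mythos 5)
Your opening move is the same as the paper's: the theorem is obtained by showing that a Borel $4$-edge-coloring of a $3$-regular Borel bipartite graph can be manufactured, in a Borel and component-local way, from disjoint complete sections for a pair of countable Borel equivalence relations, and then invoking the measure/category existence of such sections (Lemma~\ref{lemma:measure_dcs}, Theorem~\ref{dcs_measure_cat}). But that manufacturing step is the entire content of the theorem, and you explicitly defer it (``the hard part is the bridge \dots is part of the work''). The paper's proof is precisely that bridge, carried out as the chain $(1)\implies(2)\implies(3)\implies(4)$ of Lemma~\ref{dcs_equiv_2}: from disjoint complete sections one first gets a single Borel set $B$ such that $B$ and $\comp{B}$ are each complete sections for \emph{both} relations; from that, a Borel directing of $G$ with no sinks or sources; and from the directing, a decomposition $G=H_0\cup H_1$ into the edges directed $A\to B$ and $B\to A$, each of degree $\leq 2$. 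Even then one is not done, because the doubly infinite lines in an $H_i$ need three colors; the paper runs a further complete-section argument on a set of marker edges to swap edges between $H_0$ and $H_1$ so that each becomes $2$-edge-colorable in a Borel way. None of this is present in, or readily substitutable by, your sketch.

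Your proposed alternative bridge (greedy $\{1,2,3\}$-coloring at the $A$-stars plus a color-$4$ repair set $D$) has two concrete unresolved obstructions that you name but do not overcome. First, the greedy coloring can produce monochromatic $B$-stars, which no single fourth color repairs; preventing them is itself a nontrivial Borel constraint-satisfaction problem, not something that visibly ``folds into'' the section-finding. Second, and more seriously, the set $D$ you need --- exactly one of the two designated coincident edges at each conflicted $B$-star, and at most one edge per $A$-star --- is a Borel system-of-distinct-representatives/matching-type object. Disjoint complete sections for your $E$ and $F$ on the edge set only yield two disjoint Borel sets of edges, one meeting every $A$-star and one meeting every $B$-star; this is far weaker than what $D$ requires, and no mechanism is given for upgrading it. Since Borel selection problems of exactly this matching flavor are the ones that genuinely fail (Theorem~\ref{matchings}), this step cannot be waved through: it needs an argument of the kind the paper supplies via directings and the line-breaking construction. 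As it stands the proposal is a plausible strategy outline with the decisive lemma missing.
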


Finally, in recent joint work with Clinton Conley and Robin
Tucker-Drob~\cite{CMT-D},
we have shown that for every $n \geq 3$ and every Borel
graph $G$ of degree $\leq n$ on a standard Borel space $X$, if $G$ does not
contain a complete graph on
$n+1$ vertices, then there is a $\mu$-measurable
$n$-coloring of $G$ with respect to any Borel probability measure $\mu$ on
$X$ and a Baire measurable $n$-coloring of $G$ with respect to every
compatible Polish topology on $X$.
Hence,
Theorem~\ref{intro_possible_chromnum} can not be proved using pure measure
theoretic or Baire category arguments, except in the exceptional case $n=2$.

\subsection{Notation and conventions}
\label{subsec:notation}

Our basic reference for descriptive set theory is \cite{MR1321597}. 
Throughout we will use $X$, $Y$, and $Z$ to denote standard Borel spaces,
$x$, $y$, and $z$ for elements of such spaces, and $A$, $B$, and $C$ for
subsets of standard Borel spaces (which will generally be Borel). Given a
subset $A$ of a standard Borel space, we let $\comp{A}$ denote its
complement. 

We will use $E$ and $F$ for countable Borel equivalence
relations, and $G$ and $H$ for Borel graphs. We will use $f$, $g$, and $h$
to denote functions between standard Borel spaces, and $c$ for Borel colorings. 
$\Gamma$ and $\Delta$ will be used to denote countable groups, and $\alpha$,
$\beta$, $\gamma$, and $\delta$ will be their elements. We will use $e$ for
the identity of a group. By countable group, we will always mean
countable discrete group.

If $E$ is a countable Borel equivalence relation on a standard Borel
space $X$, then $A \subset X$ is said to be \define{$E$-invariant} if $x \in A$ and
$x \E y$ implies $y \in A$. If $B$ is a subset of $X$,
then we will often consider the largest $E$-invariant subset of $B$.
Precisely, this is the set $A$ of $x \in X$ such that for 
all $y \in X$ where $y \E x$, we have $y \in B$. 

\subsection{Acknowledgments}

Section 4 of this paper is taken from the author's thesis, which was
written under the excellent direction of Ted Slaman. The author would like
to thank Professor Slaman for many years of wise advice. The author would
also like to thank Clinton Conley, Alekos Kechris, Benjamin Miller, Anush
Tserunyan, Robin Tucker-Drob, and Jay Williams for providing helpful
feedback and suggestions throughout the development of this paper. Finally,
the author would like to thank the referee for many helpful suggestions.

\section{The main lemma}
\label{sec:main}

Let $\Gamma$ and $\Delta$ be disjoint countable groups, and let $\Gamma *
\Delta$ be their free product. Each nonidentity element of $\Gamma *
\Delta$ can be uniquely written as a finite product of either the form
$\gamma_{i_0} \delta_{i_1} \gamma_{i_2} \delta_{i_3} \ldots$ or
$\delta_{i_0} \gamma_{i_1} \delta_{i_2} \gamma_{i_3} \ldots$, where
$\gamma_i \in \Gamma$ and $\delta_i \in \Delta$ are nonidentity elements
for all $i$. Words of the former form we call \define{$\Gamma$-words}, and
words of the latter form we call \define{$\Delta$-words}. Our proof will
use games for building an element $y \in \N^{\Gamma * \Delta}$ where player
I defines $y$ on $\Gamma$-words and player II defines $y$ on
$\Delta$-words.

The following simple observation will let us combine winning strategies in
these games in a useful way. Let $W_\Gamma$ and $W_\Delta$ be the sets of
$\Gamma$-words and $\Delta$-words respectively. Then for distinct
$\gamma, \gamma' \in \Gamma$ we have that $\gamma W_\Delta$ and
$\gamma' W_\Delta$ are disjoint, and the analogous fact is true when the
roles of $\Gamma$ and $\Delta$ are switched. 

We now proceed to our main lemma. Note that both
  $\Gamma$ and $\Delta$ act on $\Free(\N^{\Gamma * \Delta})$ by restricting
  the left shift action of $\Gamma * \Delta$ to these subgroups.

\begin{lemma}\label{main_lemma}[Main Lemma]
  Let $\Gamma,\Delta$ be countable groups. 
  If $A \subset \Free(\N^{\Gamma * \Delta})$ is any Borel
  set, then at least one of the following holds:
  \begin{enumerate}
  \item There is a continuous injective function $f: \Free(\N^\Gamma) \to
  \Free(\N^{\Gamma * \Delta})$ that is
  equivariant with respect to the left shift action of $\Gamma$ on these spaces and
  such that $\ran(f) \subset A$. 

  \item There is an continuous injective function $f: \Free(\N^\Delta) \to
  \Free(\N^{\Gamma * \Delta})$ that is
  equivariant with respect to the left shift action of $\Delta$ on these spaces and
  such that $\ran(f) \subset \Free(\N^{\Gamma * \Delta}) \setminus A$.
  \end{enumerate}
\end{lemma}

\begin{proof}
  The main difficulty in our proof is arranging that our games 
  produce elements of $\Free(\N^{\Gamma * \Delta})$, and not merely elements
  of $\N^{\Gamma * \Delta}$. To
  begin, we make a definition that will get us halfway there.
  Let $Y$ be the largest invariant set of $y \in \N^{\Gamma * \Delta}$ 
  such that for all nonidentity
  $\gamma \in \Gamma$ and $\delta \in \Delta$, we have $\gamma \cdot y \neq y$, and 
  $\delta \cdot y \neq y $. That is, $Y$
  is the set of $x \in \N^{\Gamma * \Delta}$ such that for all $\alpha \in \Gamma * \Delta$, if $y =
  \alpha^{-1} \cdot x$, then $y$ has the property above. Note that $Y$
  contains $\Free(\N^{\Gamma * \Delta})$.

  Next, we give a definition that we will use to organize the turn on which
  $y(\alpha)$ is defined in our game for each  $\alpha \in \Gamma *
  \Delta$. Fix injective listings $\gamma_0, \gamma_1, \ldots$ and
  $\delta_0, \delta_1, \ldots$ of all the nonidentity elements of $\Gamma$ and
  $\Delta$ respectively. We define the turn function $t: \Gamma * \Delta
  \to \N$ as follows. First, define $t(e) = -1$. Then, for each nonidentity
  element $\alpha \in \Gamma
  * \Delta$, there is a unique sequence $i_0, i_1 \ldots i_m$ such that
  $\alpha = \gamma_{i_0} \delta_{i_1} \gamma_{i_2} \ldots$ or $\alpha =
  \delta_{i_0} \gamma_{i_1} \delta_{i_2} \ldots$. We define $t(\alpha)$ to
  be the least $n$ such that the associated sequence $i_0, i_1, \ldots i_m$
  for $\alpha$ has $i_j + j \leq n$ for all $j \leq m$. The key property of
  this definition is that if $i \leq n$, 
  and $\alpha$ is a $\Delta$-word or the
  identity, then $t(\gamma_i \alpha) \leq n$ if and only if $t(\alpha) < n$. Of
  course, this remains true when the roles of $\Gamma$ and $\Delta$ are
  switched. 

  Now given a Borel set $B \subset Y$, 
  and $k \in \N$ we define the following game $G^B_k$ for
  producing a $y \in \N^{\Gamma * \Delta}$ such that $y(e) = k$.
  Player I goes first, and the players alternate defining $y$ on finitely
  many nonidentity elements of $\Gamma * \Delta$ as follows. 
  On 
  the $n$th turn of the game for $n \geq 0$, player I must define $y(\alpha)$ on all
  $\Gamma$-words $\alpha$ with $t(\alpha) = n$, and then player II must
  respond by 
  defining $y(\alpha)$ on all $\Delta$-words $\alpha$ with $t(\alpha) =
  n$. 
  We give an illustration of how the game is played:
\[\begin{array}{ccc}
\text{I} && \text{II} \\
y(\gamma_0) && \\
&&  y(\delta_0) \\
y(\gamma_1) && \\
y(\gamma_0 \delta_0) && \\
y(\gamma_1 \delta_0) && \\
& \vdots &
\end{array}\]
  
  All that remains is to define the winning condition of the game. First,
  if the $y$ that is produced is in $Y$, then Player II wins the game if and
  only if $y$ is in $B$. If $y \notin Y$ then there must be some $\alpha$
  such that there is a nonidentity $\gamma \in \Gamma$ such that $\gamma
  \alpha^{-1} \cdot y = \alpha^{-1} \cdot y $,
  or there is a nonidentity $\delta \in \Delta$ such that $\delta 
  \alpha^{-1} \cdot y = \alpha^{-1} \cdot y$. In
  the former case, say $(\alpha,\Gamma)$ witnesses $y \notin Y$, and in the
  latter say $(\alpha,\Delta)$ witnesses $y \notin Y$. Say $\alpha$
  witnesses $y \notin Y$ if either $(\alpha,\Gamma$) or $(\alpha,\Delta)$
  witnesses $y \notin Y$. 
  Now if $(e,\Gamma)$
  witnesses $y \notin Y$, then player I loses. 
  Otherwise, if $(e,\Delta)$ witnesses $y \notin Y$ then
  player II loses. Finally, if neither of the above happens, then player I
  wins if and only if there is a $\Delta$-word $\alpha$ witnessing $y
  \notin Y$ such that for all $\Gamma$-words $\beta$ with $t(\beta) \leq
  t(\alpha)$, we have that $\beta$ does not witness $y \notin Y$. This
  finishes the definition of our game.

  Next, we associate to our set $A \subset \Free(\N^{\Gamma * \Delta})$ a
  set $B_A$ that we will use in the play of our game. Let $E_\Gamma$ be the
  equivalence relation on $Y$ where $x \E_\Gamma y$ if there is a $\gamma
  \in \Gamma$ such that $\gamma \cdot x = y$. Define $E_\Delta$ similarly.
  By Lemma~\ref{partition_non_independent} which we defer till later, we
  can find a Borel subset $C$ of $Y \setminus \Free(\N^{\Gamma * \Delta})$
  such that $C$ meets every $E_\Delta$-class on $Y \setminus
  \Free(\N^{\Gamma * \Delta})$ and its complement $\comp{C}$ meets every
  $E_\Gamma$-class on $Y \setminus \Free(\N^{\Gamma * \Delta})$. Let $B_A =
  A \union C$. Our use of $C$ here will be important at the end of the
  proof to ensure that we create a function into $\Free(\N^{\Gamma *
  \Delta})$ and not merely into $Y$.

  By Borel determinacy, either player I or player II has a winning strategy
  in $G^{B_A}_k$ for each $k \in \N$. So by the pigeon-hole principle,
  either player I wins $G^{B_A}_k$ for infinitely many $k$ or player II wins
  $G^{B_A}_k$ for infinitely many $k$. Assume the latter case holds, and
  let $S$ be the set of $k$ such that player II wins $G^{B_A}_k$. 
  An analogous argument will work in the case that player I
  wins for infinitely many $k$. Since there is a 
  continuous injective equivariant function from $\Free(\N^\Gamma)$ to
  $\Free(S^\Gamma)$, it will suffice to define a continuous injection $f:
  \Free(S^\Gamma) \to \Free(\N^{\Gamma * \Delta})$ that is equivariant
  with respect to the left shift action of $\Gamma$ on these spaces and
  such that $\ran(f) \subset A$. Fix winning strategies in each game
  $G^{B_A}_{k}$ for $k \in S$.

  We will define $f$ so that for all $x \in \Free(\N^{\Gamma})$ and all
  $\gamma \in \Gamma$, we have $f(x)(\gamma) = x(\gamma)$, and so that for
  all $x$, $f(x)$ will be a winning outcome of player II's winning strategy
  in the game $G^{B_A}_{x(e)}$. 

  We proceed as follows. Fix an $x$ in $\Free(\N^\Gamma)$. For each $\gamma
  \in \Gamma$ we will play an instance of the game
  $G^{B_A}_{x(\gamma^{-1})}$ whose outcome will be $\gamma \cdot f(x)$.
  We play these games for all $\gamma \in \Gamma$ simultaneously. The moves
  for player II in these games will be made by the winning strategies that
  we have fixed. We will specify how to move for player I in these games to
  satisfy our requirement that $f$ is equivariant and $f(x)(\gamma) =
  x(\gamma)$.

  So for each $\gamma \in \Gamma$, we are playing an instance of the game
  $G^{B_A}_{x(\gamma^{-1})}$ to define a $y \in \N^{\Gamma * \Delta}$ equal
  to $\gamma \cdot f(x)$. To begin, we have $\gamma \cdot f(x)(e) =
  x(\gamma^{-1})$ by the definition of the game.
  
  Inductively, suppose $\gamma \cdot f(x)(\alpha)$ is defined for all
  $\gamma \in \Gamma$ and all $\alpha$ with $t(\alpha) < n$. We need to
  make the $n$th move for player I in all our games. Suppose $\beta$ is a
  $\Gamma$-word with $t(\beta) = n$ so we can write $\beta = \gamma_i
  \alpha$ where $i \leq n$ and $t(\alpha) < n$. For all $\gamma \in
  \Gamma$, we now define $(\gamma \cdot f(x))(\gamma_i \alpha) =
  (\gamma_i^{-1} \cdot (\gamma  \cdot f(x)))(\alpha) = (\gamma_i^{-1} \gamma
  \cdot f(x))(\alpha)$, which has already been defined in the game
  associated to $\gamma_i^{-1} \gamma$ by assumption. Hence we can make the
  $n$th move for player I in all our games using this information. To
  finish the $n$th turn, the winning strategies for player II in these
  games respond with their $n$th moves, defining $\gamma \cdot
  f(x)(\delta_i \alpha)$ for all $i \leq n$ and all $\alpha$ such that
  $\alpha = e$ or $\alpha$ is a $\Delta$-word with $t(\alpha) < n$. 

  Based on our definition, it is clear that $f$ is injective, continuous,
  $\Gamma$-equivariant, and that $f(x)$ is an outcome of player II's
  winning strategy in $G_{x(e)}^{B_A}$. All that remains is to show
  $\ran(f) \subset A$. 

  First, we argue that for all $x \in \Free(\N^\Gamma)$, we have $f(x) \in
  Y$. Now since $x \in \Free(\N^\Gamma)$ and $f(x)(e) = x(e)$, we see
  that $(e,\Gamma)$ can not witness $f(x) \notin Y$. Further, since $f(x)$
  is a winning outcome of a strategy for player II, $(e,\Delta)$ can not
  witness $f(x) \notin Y$. Now we can prove inductively that $\alpha$ does
  not witness $f(x) \notin Y$ for all $x \in \Free(\N^\Gamma)$ and all $\alpha \in \Gamma * \Delta$
  with $t(\alpha) = n$. For each $n$ we do the case of $\Gamma$-words
  first, and then the case of $\Delta$-words. Suppose $\alpha$ is a
  $\Gamma$-word with $t(\alpha) = n$, so $\alpha = \gamma \beta$ for some
  $\gamma \in \Gamma$ and $\beta$ with $t(\beta) <
  t(\alpha)$. Since $\alpha^{-1} \cdot f(x) = \beta^{-1} \gamma^{-1} \cdot
  f(x) = \beta^{-1} \cdot f(\gamma^{-1} \cdot x)$ and $\beta$ does not
  witness 
  $f(\gamma^{-1} \cdot x) \notin Y$ by our induction
  hypothesis, we must have that $\alpha$ does not witness $f(x) \notin Y$.
  Now suppose $\alpha$ is a $\Delta$-word with $t(\alpha) = n$. We may
  assume no $\Gamma$-word $\beta$ with $t(\beta) \leq n$ witnesses $f(x)
  \notin Y$. Hence, we see that player II must ensure $\alpha$ does not
  witness $\alpha \cdot f(x) \notin Y$ otherwise they lose the game
  $G^{B_A}_{x(e)}$ used to define $f(x)$.
 
  For all $x$, since $f(x) \in Y$, we have $f(x) \in B_A$, since $f(x)$ is
  a winning outcome for player II in some $G^{B_A}_{x(e)}$. Finally, we
  claim that $f(x) \in A$ for all $x$. This is because $\ran(f)$ and $A$
  are $\Gamma$-invariant, $B_A = A \union C$, and $C$ does
  not contain any nonempty $\Gamma$-invariant sets by definition.
  \end{proof}

To finish establishing Lemma~\ref{main_lemma}, we must prove
Lemma~\ref{partition_non_independent} which was used to define the set $C$
above. We will prove a version for countably many equivalence
relations instead of merely two, since we will use this more general
version in a later paper. 

We begin by recalling a useful tool for organizing
constructions in Borel combinatorics.
Let $X$ be a standard Borel space. We let $[X]^{< \infty}$ denote the
standard Borel space of
finite subsets of $X$. If $E$ is a countable Borel equivalence relation,
we let $[E]^{< \infty}$ be the Borel subset of $[X]^{< \infty}$ consisting
of the $S \in [X]^{< \infty}$ such that $S$ is a subset of some
equivalence class of $E$. If $Y$ is a Borel subset of $[X]^{< \infty}$,
then the \define{intersection graph} on $Y$ is the graph $G$ where $R \G S$ for
distinct $R,S \in Y$ if $R \inters S \neq \emptyset$.

\begin{lemma}[{\cite[{Lemma 7.3}]{MR2095154}} {\cite[Proposition
2]{CM2}}]
  \label{intersection_graph_coloring}
  Suppose $E$ is a countable Borel equivalence relation and let $G$ be the
  intersection graph on $[E]^{<\infty}$. Then $G$ has a Borel $\N$-coloring.
\end{lemma}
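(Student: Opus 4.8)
The plan is to reduce the statement to producing a single Borel map $c \colon [E]^{<\infty} \to \N$ with the property that for every $x \in X$, the restriction of $c$ to the family $\mathcal{S}_x = \{S \in [E]^{<\infty} : x \in S\}$ is injective. This suffices: if $R, S$ are distinct vertices with $R \cap S \neq \emptyset$, pick any $x \in R \cap S$; then $R, S \in \mathcal{S}_x$, so $c(R) \neq c(S)$. Conversely every proper coloring is injective on each $\mathcal{S}_x$, so the two formulations are equivalent, and I would work with the injectivity formulation.

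To build $c$ I would first invoke the Feldman--Moore theorem to write $E = E_\Gamma$ for a countable group $\Gamma = \{g_n : n \in \N\}$ of Borel automorphisms of $X$ with $g_0 = \mathrm{id}$, and define the Borel function $n \colon E \to \N$ by $n(x,y) = \min\{i : g_i \cdot x = y\}$. For fixed $x$ the map $y \mapsto n(x,y)$ is injective on $[x]_E$, so $S \mapsto \theta_x(S) := \{n(x,y) : y \in S\}$ is injective on subsets of $[x]_E$. The naive attempt $c(S) = \theta_x(S)$ fails only because $c$ is not allowed to mention $x$; the genuine difficulty is to encode the index data of $S$ using labels that are \emph{absolute}, i.e.\ attached to the points themselves rather than to their positions inside $S$.

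This is where I would use Theorem~\ref{KST_ncoloring}. For each $k$ let $H_k$ be the Borel graph on $X$ joining distinct $E$-related $x, y$ whenever $n(x,y) \leq k$ or $n(y,x) \leq k$; since $\{y : n(x,y) \leq k\}$ and $\{y : n(y,x) \leq k\}$ each have at most $k+1$ elements, $H_k$ has degree $\leq 2(k+1)$, so Theorem~\ref{KST_ncoloring} yields a Borel coloring $d_k \colon X \to \{0, \dots, 2k+2\}$. For $S \in [E]^{<\infty}$ set $K(S) = \max\{n(u,v) : u,v \in S\}$ (with $K(S) = 0$ for singletons). Every pair $u, v \in S$ satisfies $n(u,v) \leq K(S)$ and is therefore $H_{K(S)}$-adjacent, so $d_{K(S)}$ is injective on $S$, providing absolute labels of the points of $S$ that are guaranteed distinct. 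I would then define
\[
  c(S) = \bigl(\, K(S),\ \{(d_{K(S)}(u),\, d_{K(S)}(v),\, n(u,v)) : u,v \in S\}\,\bigr),
\]
coded as an element of $\N$ through a fixed Borel bijection of $\N \times [\N^3]^{<\infty}$ with $\N$; this is manifestly Borel.

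Finally I would verify injectivity on each $\mathcal{S}_x$. Suppose $x \in S \cap S'$, $S \neq S'$, and $c(S) = c(S')$. Then $K(S) = K(S') =: K$, and the recorded labeled index sets agree. The crucial point is that $d_K(x)$ equals the same value $a$ whether $x$ is viewed inside $S$ or inside $S'$, while $d_K$ is injective on each of $S, S'$; restricting the recorded triples to those with first coordinate $a$ and projecting onto the last coordinate yields $\theta_x(S) = \theta_x(S')$, whence $S = S'$ by injectivity of $\theta_x$ on subsets of $[x]_E$ --- a contradiction. The main obstacle is exactly this separation step: without the absolute labels $d_{K(S)}$, any purely internal encoding of $S$ (for instance via ranks in a Borel linear order) can collide, since the shared point $x$ may occupy different positions in $S$ and $S'$. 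The finite-degree colorings supplied by Theorem~\ref{KST_ncoloring} are precisely what pin $x$ to a fixed coordinate in both records.
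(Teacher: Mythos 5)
Your argument is correct. Note that the paper itself gives no proof of this lemma --- it is quoted directly from \cite{MR2095154}*{Lemma 7.3} --- so there is no in-paper argument to compare against; what you have written is a valid self-contained proof from Feldman--Moore and Theorem~\ref{KST_ncoloring}. The reduction to injectivity on each clique $\mathcal{S}_x$ is right (those cliques cover all edges), the enumeration $n(x,y)$ does make $S \mapsto \theta_x(S)$ injective for each fixed base point, and the colorings $d_k$ of the finite-degree graphs $H_k$ do exactly what you claim: since $K(S)=K(S')$ is recorded and $d_K$ is injective on both $S$ and $S'$, the shared point $x$ carries the same label $a=d_K(x)$ in both records, so the triples with first coordinate $a$ recover $\theta_x(S)$ and $\theta_x(S')$ and force $S=S'$. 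For comparison, the more usual route also runs through Feldman--Moore plus the finite-degree coloring theorem, but applies the latter to the intersection graph itself rather than to auxiliary graphs on $X$: partition $[E]^{<\infty}$ into the Borel pieces $P_N=\{S: K(S)=N\}$, observe that any $T\in P_N$ meeting $S\in P_N$ satisfies $T\subset\{g_ix : i\le N\}$ for any $x\in S\inters T$, so the intersection graph restricted to $P_N$ has degree at most $(N+1)2^{N+1}$ and Theorem~\ref{KST_ncoloring} colors it with finitely many colors; using pairwise disjoint color sets for distinct $N$ finishes. Your version is slightly longer but produces an explicit ``canonical code'' for each $S$ relative to any of its points, which is a reusable device; the only cosmetic remark is that recording the pairs $(d_{K(S)}(u), n(u,v))$ would already suffice, the middle coordinate $d_{K(S)}(v)$ being unused in your verification.
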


We will often use this lemma in the following way. Suppose $E$ is a
countable Borel equivalence relation and $A$ is a Borel subset of
$[E]^{<\infty}$ containing at least one subset of every $E$-class. Then 
there is a Borel set $B \subset A$ such that elements of $B$ are pairwise
disjoint, and $B$ meets every $E$-class. 
To see this, pick some Borel $\N$-coloring of the intersection graph of $[E]^{<
\infty}$ using Lemma~\ref{intersection_graph_coloring}, and then let 
$B$ be the set of $R \in A$ that are assigned the least color of all
elements of $A$ from the same $E$-class.

We need a couple more definitions. Suppose that
$I \in \{1, 2, \ldots, \infty\}$ and $\{E_i\}_{i < I}$ are finitely many or countably many
equivalence relations on $X$. Then the $E_i$ are said to be
\define{non-independent} if there exists a sequence $x_0, x_1, \ldots,
x_n$ of distinct elements of $X$, and $i_0, i_1, \ldots i_n \in \N$
with $n \geq 2$ such that $i_j \neq i_{j+1}$ for $j < n$, $i_n \neq i_0$, and $x_0 \E_{i_0}
x_1 \E_{i_1} x_2 \ldots x_n \E_{i_n} x_0$. We say this pair of sequences
$x_0, \ldots, x_n$ and $i_0, \ldots, i_n$ witnesses the non-independence of
the $E_i$. The $E_i$ are said to be \define{independent} if they are not
non-independent.
The \define{join of the $E_i$}, denoted $\bigvee_{i < I} E_i$, is
the smallest equivalence relation containing all the $E_i$. Precisely, $x$
and $y$ are $\bigvee_{i < I} E_i$-related if there is a sequence $x_0, x_1,
\ldots x_n$ of elements in $X$ such that $x = x_0$, $y = x_n$, and for all
$j < n$, we have $x_j \E_i x_{j+1}$ for some $i < I$. Finally, we say that
the
$E_i$ are \define{everywhere non-independent} if for every 
$\bigvee_{i < I} E_i$ equivalence class $A \subset X$, the restrictions of
the $E_i$ to $A$ are not independent.

\begin{lemma}\label{partition_non_independent}
Suppose that $I \in \{1, 2, \ldots, \infty\}$ and $\{E_i\}_{i < I}$ are
countable Borel equivalence relations on a standard Borel space $X$ that
are everywhere non-independent. Then there exists a Borel partition
$\{A_i\}_{i < I}$ of $X$ such that for all $i < I$, 
$\comp{A_i}$ meets every $E_i$-class.
\end{lemma}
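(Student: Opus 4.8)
The plan is to rephrase the conclusion as a coloring problem and then use the cycles furnished by everywhere non-independence as Borel ``anchors'' from which to propagate a coloring. Writing $c:X\to\{i:i<I\}$ for the function with $c(x)=i$ iff $x\in A_i$, the requirement that $\comp{A_i}$ meet every $E_i$-class is exactly the requirement that no $E_i$-class be entirely colored $i$; so it suffices to produce a Borel $c$ such that for every $i$ and every $E_i$-class $B$ there is some $x\in B$ with $c(x)\neq i$. Let $E=\bigvee_{i<I}E_i$ and let $G$ be the locally countable Borel join graph, where $x\mrel{G}y$ iff $x\neq y$ and $x\mrel{E_i}y$ for some $i$; the $G$-connected components are exactly the $E$-classes.

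First I would select the anchors. The set $W\subset[E]^{<\infty}$ of finite sets that are the vertex set of some cycle $x_0\mrel{E_{i_0}}x_1\mrel{E_{i_1}}\cdots\mrel{E_{i_n}}x_0$ witnessing non-independence is Borel, and by everywhere non-independence it contains a subset of every $E$-class. Applying the consequence of Lemma~\ref{intersection_graph_coloring} recalled above, I obtain a Borel $W'\subset W$ whose elements are pairwise disjoint and which still contains a subset of every $E$-class. Let $R=\bigcup W'$; then $R$ is Borel, meets every $E$-class, and each point of $R$ lies in a unique chosen cycle. For each $x\in R$ I reconstruct Borel-ly a cyclic labeling of its chosen cycle (say the lexicographically least valid one) and color $x$ by the label $i_k$ of its outgoing edge $x_k\mrel{E_{i_k}}x_{k+1}$. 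For each $x\in X$ let $\rho(x)$ be the $G$-distance from $x$ to $R$ inside its $E$-class; this is a finite Borel function since every $E$-class is $G$-connected and meets $R$. For $x\notin R$, let $\lambda(x)$ be the least $i$ for which $x$ has a $G$-neighbor $y$ with $x\mrel{E_i}y$ and $\rho(y)=\rho(x)-1$, and set $c(x)=\lambda(x)$.

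To verify correctness, fix an $E_i$-class $B$ and let $x$ be the $\rho$-minimal point of $B$ (breaking ties by some Borel linear order). If $x\notin R$ then $c(x)=\lambda(x)$, and since the descent neighbor witnessing $\lambda(x)$ has strictly smaller $\rho$ than the minimum of $B$, it cannot lie in $B$, so $\lambda(x)\neq i$ and $x$ is a non-$i$ point of $B$. If instead $x\in R$, say $x=x_k$ on its chosen cycle with outgoing color $i_k$: if $i\neq i_k$ then $x$ itself is a non-$i$ point of $B$; and if $i=i_k$ then the successor $x_{k+1}$ lies in $B$ and is colored $i_{k+1}\neq i_k$, except in the boundary case $i_n=i_0$ at the seam of the cycle, which one checks is resolved by the vertex $x_1$ (colored $i_1\neq i_0$). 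In every case $B$ contains a non-$i$ point, as required.

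The main obstacle is the coordination problem hidden in the naive strategy of sacrificing, in each $E_i$-class, its $\rho$-minimal point: the global $\rho$-minimum of an $E$-class would be the minimal point of all of its classes at once and so would be forbidden every color. Everywhere non-independence is exactly what lets me avoid this, since the selected cycle gives a finite Borel anchor whose vertices can be colored self-consistently by outgoing edge labels (this is where the distinctness of the cycle vertices and the condition $i_j\neq i_{j+1}$ are used), playing the role the minimum cannot. By contrast, on an acyclic class such as a bi-infinite $G$-line no such Borel anchor exists, which is why the hypothesis cannot be dropped.
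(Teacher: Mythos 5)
Your proof is correct and takes essentially the same route as the paper's: both apply Lemma~\ref{intersection_graph_coloring} to extract pairwise disjoint witnessing cycles meeting every join-class, color each cycle vertex $x_k$ by its outgoing label $i_k$ (your handling of the seam case $i_n=i_0$ via $x_1$ is the right fix), and then propagate the coloring outward through the rest of each class. The only divergence is bookkeeping in the propagation step --- the paper spreads colors by an iterated flood-fill maintaining the invariant that every point of $A_i$ has an $E_i$-equivalent point outside $A_i$, whereas you color each remaining point by the label of a geodesic edge toward the anchor set and verify the conclusion at a $\rho$-minimal point of each $E_i$-class --- and both versions work.
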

\begin{proof}
Using Lemma~\ref{intersection_graph_coloring}, let $C \subset [\bigvee_{i < I} E_i]^{<
\infty}$ be a Borel set
containing at least one subset of each equivalence class of $\bigvee_{i <
I} E_i$ such that the elements of $C$ are pairwise disjoint and each
set $\{x_0, x_1, \ldots, x_n\} \in C$ can be assigned an order $x_0, x_1,
\ldots, x_n$ and an associated sequence $i_0, \ldots, i_n$ of natural
numbers such that these sequences witness the failure of the independence
of the $E_i$. Fix a Borel way of assigning such an order and associated
$i_0, \ldots, i_n$ to each element of $C$. Define disjoint sets $\{A_{i,0}\}_{i < I}$
by setting $x \in A_{i,0}$ if there is an element $\{x_0, \ldots, x_n\}$ of $C$ with
associated sequence $i_0, \ldots, i_n$ and a $j\leq n$ such that 
$x = x_j$ and $i = i_j$. 
Note that for all $i < I$ and for all $x \in A_{i,0}$, there is a $y \in
[x]_{E_i}$ and $j \neq i$ such that $y \in A_{j,0}$. 

Let $k_0, k_1, \ldots$ be a sequence containing each number less than $I$
infinitely many times. Given $\{A_{i,n}\}_{i < I}$ we construct disjoint
sets $\{A_{i,n+1}\}_{i < I}$, where $A_{i,n+1} \supset A_{i,n}$ as follows.
Let $B_{i,n+1}$ be the set of $x$ such that 
$x \in [A_{k_n,n}]_{E_i}$ and for all $j < i$, $x \notin
[A_{k_n,n}]_{E_j}$. Then let $A_{i,n+1} =
A_{i,n} \union (B_{i,n+1} \setminus \union_{j \neq i} A_{j,n})$.
The sets $A_{i,n}$ are Borel, since
every
$E_i$ is generated by the Borel action of a countable group by the
Feldman-Moore theorem~{\cite[Theorem 1.3]{MR2095154}}.

It is easy to prove by induction that for all $x \in A_{i,n}$, there is a
$y \in [x]_{E_i}$ and a $j \neq i$ such that $y \in A_{j,n}$. Let $A_i =
\union_n A_{i,n}$, which are disjoint and partition the space. 
\end{proof}

\section{Applications to Borel chromatic numbers and matchings}
\label{sec:applications}

We now show how our main lemma can be applied to prove the theorems 
discussed in the introduction. Recall that if $G$ and $H$ are Borel graphs
on the standard Borel spaces $X$ and $Y$ respectively, then a \define{Borel
homomorphism} from $G$ to $H$ is a Borel function $f: X \to Y$ such that $x
\G y$ implies $f(x) \mathrel{H} f(y)$. It is clear that if there is a Borel
homomorphism $f$ from $G$ to $H$, then $\chi_B(G) \leq \chi_B(H)$; if $c$
is a Borel coloring of $H$, then $c \circ f$ is a Borel coloring of $G$. 

\begin{thm}\label{chromnum_free_product}
  If $\Gamma$ and $\Delta$ are finitely generated marked groups, then
  \[\chi_B(G(\Gamma*\Delta,\N)) \geq \chi_B(G(\Gamma,\N)) +
  \chi_B(G(\Delta,\N)) -1\]
\end{thm}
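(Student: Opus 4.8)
The plan is to derive Theorem~\ref{chromnum_free_product} as a direct consequence of the Main Lemma (Lemma~\ref{main_lemma}), exploiting the fact that an equivariant injection $f \colon \Free(\N^\Gamma) \to \Free(\N^{\Gamma*\Delta})$ is automatically a Borel homomorphism from $G(\Gamma,\N)$ into the restriction of $G(\Gamma*\Delta,\N)$ to the $\Gamma$-part. The overall strategy is contrapositive: I will show that if $G(\Gamma*\Delta,\N)$ had a Borel coloring using too few colors, then I could simultaneously color $G(\Gamma,\N)$ and $G(\Delta,\N)$ in a way that contradicts the definitions of their Borel chromatic numbers. Write $p = \chi_B(G(\Gamma,\N))$ and $q = \chi_B(G(\Delta,\N))$; the goal is to show that any Borel coloring of $G(\Gamma*\Delta,\N)$ must use at least $p + q - 1$ colors.

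First I would set up the counting framework. Suppose toward a contradiction that $c$ is a Borel coloring of $G(\Gamma*\Delta,\N)$ using a set of colors of size at most $p + q - 2$. The idea is to split the color set into two overlapping pieces: choose Borel sets of colors $P$ and $Q$ with $|P| \leq p-1$, $|Q| \leq q-1$, and $P \cup Q$ equal to the full color set, which is possible precisely because $(p-1) + (q-1) = p+q-2$. Let $A \subset \Free(\N^{\Gamma*\Delta})$ be the Borel set $c^{-1}(P)$, the points receiving a color in $P$. Its complement $\comp{A}$ is contained in $c^{-1}(Q)$. Now I apply the Main Lemma to this $A$: either case (1) yields an equivariant Borel injection $f \colon \Free(\N^\Gamma) \to \Free(\N^{\Gamma*\Delta})$ with $\ran(f) \subset A$, or case (2) yields an equivariant Borel injection into $\comp{A} \subset c^{-1}(Q)$.

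Next I would extract the contradiction from whichever case holds. In case (1), composing $c$ with $f$ gives a Borel map $c \circ f \colon \Free(\N^\Gamma) \to P$, and I must check this is a genuine coloring of $G(\Gamma,\N)$: if $x \mrel{G(\Gamma,\N)} y$ then some generator $\gamma$ of $\Gamma$ sends $x$ to $y$, so by equivariance $\gamma \cdot f(x) = f(y)$, meaning $f(x)$ and $f(y)$ are adjacent in $G(\Gamma*\Delta,\N)$ (since generators of $\Gamma$ remain generators of $\Gamma*\Delta$), whence $c(f(x)) \neq c(f(y))$. Thus $c \circ f$ is a Borel coloring of $G(\Gamma,\N)$ using only the $\leq p-1$ colors in $P$, contradicting $\chi_B(G(\Gamma,\N)) = p$. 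The symmetric argument in case (2), using $\comp{A} \subset c^{-1}(Q)$ and equivariance for $\Delta$, produces a Borel coloring of $G(\Delta,\N)$ with $\leq q-1$ colors, again a contradiction.

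The main conceptual obstacle is ensuring that the partition of colors into $P$ and $Q$ can be carried out cleanly and that the adjacency structure transfers correctly through the equivariant map; the latter hinges on the observation that the generating set of the free product $\Gamma * \Delta$ is by convention the union of the generating sets of $\Gamma$ and $\Delta$, so a $\Gamma$-generator edge in the source graph maps to a genuine $G(\Gamma*\Delta,\N)$-edge. One subtlety worth verifying is that $\chi_B(G(\Gamma,\N))$ might be infinite, in which case the inequality is trivially satisfied and there is nothing to prove; so I would first dispose of the infinite case and assume $p$ and $q$ are both finite, which legitimizes the finite color-counting argument above. Beyond that, the proof is essentially a bookkeeping argument once the Main Lemma is in hand, so I do not anticipate any deep difficulty — the real content lives entirely in Lemma~\ref{main_lemma}.
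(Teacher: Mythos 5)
Your proposal is correct and matches the paper's proof essentially step for step: the paper also sets $A = c^{-1}(\{0,\ldots,n-1\})$ for a hypothetical $(n+m)$-coloring $c$ (with $n+1$, $m+1$ the two chromatic numbers) and applies Lemma~\ref{main_lemma} to get the same dichotomy and contradiction. Your extra remarks about the generators of $\Gamma*\Delta$ and the trivial infinite case are fine but add nothing beyond what the paper leaves implicit.
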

\begin{proof}
  Suppose $\chi_B(G(\Gamma,\N)) = n+1$ and $\chi_B(G(\Delta,\N)) = m+1$ so
  that $G(\Gamma,\N)$ has no Borel $n$-coloring and $G(\Delta,\N)$ has no
  Borel $m$-coloring. Now suppose $c: \Free(\N^{\Gamma * \Delta}) \to
  \{0,1,
  \ldots, (n+m-1)\}$ was a Borel $n+m$-coloring of
  $G(\Gamma * \Delta,\N)$ and let $A$ be the set of $x$ such that $c(x) <
  n$. If $f$ is 
  the continuous equivariant function produced by
  Lemma~\ref{main_lemma}, then $c \circ f$ gives either a 
  Borel $n$-coloring of
  $G(\Gamma,\N)$ or a Borel $m$-coloring of $G(\Delta,\N)$, both of which
  are contradictions. 
\end{proof}

Let $\mathcal{C}$ be the class of finitely generated marked groups $\Gamma$ such
that $G(\Gamma,\N)$ is $n$-regular, and $\chi_B(G(\Gamma,\N))
= n+1$, so that the upper bound on the Borel chromatic number of
$G(\Gamma,\N)$ given by
Theorem~\ref{KST_ncoloring} is sharp. Brooks's
theorem in finite graph theory (see e.g. {\cite[Theorem 5.2.4]{MR2159259}})
implies that the finite
groups included in $\mathcal{C}$ are exactly those whose Cayley graphs are odd cycles
or complete graphs on $n$ vertices. The only prior results giving
infinite groups in $\mathcal{C}$ are from \cite{MR1667145} where we have
that $\Z$
and $\Z/2\Z * \Z /2\Z$ are in $\mathcal{C}$ when equipped with their usual
generators. Conley and Kechris~{\cite[Theorem 0.10]{MR3019078}} have shown that
these are the only two groups with finitely many ends that are in
$\mathcal{C}$.
Theorem~\ref{chromnum_free_product} implies that $\mathcal{C}$
is closed under free products; if $G(\Gamma,\N)$ is $n$-regular and
$G(\Delta,\N)$ is $m$-regular, then $G(\Gamma * \Delta,\N)$ is
$n+m$-regular. For example, $\chi_B(G((\Z/2\Z)^{*n},\N)) = n+1$, and
$\chi_B(G(\F_n,\N)) = 2n+1$ for all $n$.

Next, we will show that the Borel chromatic number of an $n$-regular
acyclic Borel graph can take any of the possible values between $2$ and
$n+1$ allowed by Theorem~\ref{KST_ncoloring}. 

We begin with an easy lemma.
\begin{lemma}\label{injective_acyclic_homomorphism}
  Suppose $G$ and $H$ are acyclic Borel graphs on the standard Borel spaces $X$ and $Y$,
  where $\chi_B(G) \geq 2$. Suppose also $f \from X \to Y$ is an injective
  Borel homomorphism from $G$ to $H$ such that $\forall x, y \in X$, if $x$
  and $y$ are in different connected components of $G$, then $f(x)$ and
  $f(y)$ are in different connected components of $H$. Then if $A =
  [\ran(f)]_H$ is the saturation of the range of $f$ under the
  connectedness relation of $H$, then $\chi_B(G) = \chi_B(H \restriction A)$. 
\end{lemma}
\begin{proof}
  $\chi_B(G) \leq \chi_B(H \restriction A)$ since
  there is a Borel homomorphism from $G$ to $H \restriction A$.
  It remains to show that $\chi_B(H \restriction A) \leq
  \chi_B(G)$. Suppose $c \from X \to Z$ is a Borel coloring of $G$. Fix two
  colors $z_0$ and $z_1 \in Z$. Now we construct a Borel coloring $c' \from
  A \to Z$ of $H \restriction A$ as follows. If $y \in
  \ran(f)$, then let $c'(y) = c(f^{-1}(y))$. Otherwise, there is a unique
  path in $H$ of shortest length $l$ from $y$ to an element $y' \in
  \ran(f)$. If $c(f^{-1}(y')) = z_0$, then let $c(y) = z_1$ if $l$ is odd
  and $c(y) = z_0$ if $l$ is even. If $c(f^{-1}(y')) \neq z_0$,
  then let $c(y) = z_0$ if $l$ is odd and $c(y) = z_1$ if $l$ is even.
\end{proof}

We are ready to proceed.

\begin{thm}\label{possible_chromnum}
  For every $n \geq 1$ and every $m \in \{2, \ldots, n+1\}$, there is a
  $n$-regular acyclic Borel graph $G$ with $\chi_B(G) = m$.
\end{thm}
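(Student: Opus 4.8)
The plan is to realize every target value $m \in \{2,\dots,n+1\}$ as $\chi_B(G(\Gamma,\N))$ for a suitable finitely generated marked group $\Gamma$ whose Cayley graph is $n$-regular and acyclic, and then take $G = G(\Gamma,\N)$. Since each connected component of $G(\Gamma,\N)$ is a copy of the Cayley graph of $\Gamma$, the graph $G(\Gamma,\N)$ is automatically $n$-regular and acyclic precisely when the Cayley graph of $\Gamma$ is $n$-regular and a tree. So the problem reduces to producing, for each pair $(n,m)$ in range, a marked group whose Cayley graph is an $n$-regular tree and whose associated Borel chromatic number is exactly $m$.

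The natural building blocks are the two extreme examples from \cite{MR1667145}: the group $\Z$ with its standard single generator, which gives a $2$-regular acyclic graph with $\chi_B(G(\Z,\N)) = 3$ (recall $\chi_B(G(\Z,2))=3$, and the value cannot go up when we enlarge the vertex alphabet to $\N$ since the upper bound of Theorem~\ref{KST_ncoloring} is $3$); and $\Z/2\Z$, whose single generator of order two yields a $1$-regular graph (a perfect matching) with $\chi_B = 2$. Free products of these, together with copies of $\F_1 = \Z$ and $\Z/2\Z$, let me tune both the regularity and the chromatic number independently. Concretely, I would take a free product of the form $(\Z/2\Z)^{*a} * \Z^{*b}$ whose canonical generating set has each $\Z/2\Z$ contributing one generator (of order $2$) and each $\Z$ contributing one generator together with its inverse, so the Cayley graph is an $n$-regular tree where $n = a + 2b$. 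By Theorem~\ref{chromnum_free_product} the Borel chromatic number is at least the sum of the constituent chromatic numbers minus $(\text{number of factors}-1)$, and each $\Z/2\Z$ factor contributes $2$ while each $\Z$ factor contributes $3$, so the lower bound telescopes to $1 + a + 2b = 1 + n$ when all factors are $\Z$'s or $\Z/2\Z$'s. That only reaches the top value $m = n+1$, so to get the smaller values I instead absorb "free" generators that add degree without raising the chromatic number.

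To lower the chromatic number while keeping the degree at $n$, the idea is to use the free group $\F_k$ as a factor: its Cayley graph with the standard generators is a $2k$-regular tree, and crucially one can explicitly Borel-color $G(\F_k,\N)$ with only $3$ colors (indeed a $2$-coloring obstruction only forces chromatic number up for small cases; the tree structure of the free group lets one $3$-color it via a Borel acyclicity argument, since any acyclic Borel graph of degree $\le n$ has $\chi_B \le n+1$, and for a free group one can do much better using a Borel "end-selection" or a Borel proper coloring descending from the product structure). More directly, I would build the desired graph as a disjoint juxtaposition is not allowed (we need connectedness of components to be a single tree type); instead I combine the $3$-colorable $2$-regular example $G(\Z,\N)$ with $n-2$ extra pendant generators arranged so the graph remains an acyclic $n$-regular tree but admits a $3$-coloring, and then interpolate between $m=3$ and $m=n+1$ by replacing pendant trees with odd-cycle-free gadgets that raise $\chi_B$ one unit at a time, using Theorem~\ref{chromnum_free_product} as the lower bound and Theorem~\ref{KST_ncoloring} as the matching upper bound. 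The two boundary cases are handled separately: $m=2$ requires a bipartite acyclic $n$-regular tree, which is exactly $G(\F_n, \N)$ restricted suitably, or the standard $n$-regular tree $2$-colored by parity of distance, realized Borel-ly via the product structure.

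The main obstacle I anticipate is the $m=2$ boundary case and, more generally, producing the matching \emph{upper} bounds $\chi_B(G) \le m$ rather than the lower bounds, since Theorem~\ref{chromnum_free_product} and the examples of \cite{MR1667145} only directly deliver graphs attaining the \emph{maximal} value $n+1$. For the upper bounds I expect to need an explicit Borel coloring scheme: for an $n$-regular tree one can always Borel-color with $n+1$ colors by Theorem~\ref{KST_ncoloring}, but achieving a smaller fixed number $m$ requires exploiting extra structure (such as a Borel orientation, a Borel proper edge coloring, or a Borel reduction to a smaller-chromatic graph). The cleanest route is likely to present $G$ as $G(\Gamma_m,\N)$ where $\Gamma_m$ is chosen so that its Cayley graph is an $n$-regular tree and $\chi_B(G(\Gamma_m,\N)) = m$ is forced from both sides simultaneously: the lower bound from free-product additivity, and the upper bound from a concrete Borel coloring built out of the three-coloring of $\Z$ and the two-coloring of bipartite trees, pieced together across the free-product structure. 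Verifying that these pieces combine into a genuinely Borel coloring of the whole, without introducing monochromatic edges across the free-product "gluing," is the delicate bookkeeping step.
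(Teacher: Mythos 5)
Your approach has a fatal gap. You propose to realize every $m \in \{2,\ldots,n+1\}$ as $\chi_B(G(\Gamma,\N))$ for a marked group $\Gamma$ whose Cayley graph is an $n$-regular tree. But any such group is a free product of copies of $\Z$ (each contributing a generator and its inverse) and $\Z/2\Z$ (each contributing an involution), say $a$ copies of $\Z/2\Z$ and $b$ copies of $\Z$ with $n = a+2b$. Theorem~\ref{chromnum_free_product} then gives $\chi_B(G(\Gamma,\N)) \geq 2a + 3b - (a+b-1) = n+1$, which together with Theorem~\ref{KST_ncoloring} forces $\chi_B(G(\Gamma,\N)) = n+1$ \emph{exactly}, for every such group. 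So no graph of the form $G(\Gamma,\N)$ with acyclic $n$-regular components can ever realize $m \leq n$; the class of examples you are searching is empty below the top value. Your attempted escape --- that $G(\F_k,\N)$ admits a Borel $3$-coloring via an ``end-selection'' or the product structure --- is false and is contradicted by the paper itself, which computes $\chi_B(G(\F_k,\N)) = 2k+1$. (Indeed, ruling out exactly such ``cheap'' Borel colorings of trees is the whole point of the determinacy argument.) The same problem kills your $m=2$ case: the $n$-regular tree cannot be Borel $2$-colored ``by parity'' on $\Free(\N^{\F_n})$.

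The paper's proof sidesteps this by abandoning vertex-transitive examples for $m \leq n$. It takes $G(\Zt^{*(m-1)},\N)$, which is $(m-1)$-regular, acyclic, and has Borel chromatic number exactly $m$, and then pads every vertex up to degree $n$ by attaching acyclic pieces living on a new uncountable standard Borel space $Y$, arranged so that the added graph is smooth (distinct old vertices lie in distinct new components). The old subgraph gives the lower bound $\chi_B(G) \geq m$, and smoothness of the added part lets one extend the $m$-coloring of $G(\Zt^{*(m-1)},\N)$ to all of $G$, giving the matching upper bound. The lesson is that the degree and the chromatic number are decoupled by a non-homogeneous construction, not by a cleverer choice of group.
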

\begin{proof}
  We have shown that for every $k \geq 2$ we have $\chi_B(G(\Zt^{*k},\N))
  = k+1$. Given $m \in \{2, \ldots, n+1\}$, canonically identify $\Zt^{*(m-1)}$
  with a subgroup of $\Zt^{*n}$. Now let 
  $f \from 
   \Free(\N^{\Zt^{*(m-1)}}) \to \Free(\N^{\Zt^{*n}})$ be the function where 
  \[ f(x)(\gamma) = \begin{cases} x(\gamma) & \text{ if $\gamma \in
  \Zt^{*(m-1)}$} \\
  0 & \text{ otherwise.}
  \end{cases}\]
  We finish by applying Lemma~\ref{injective_acyclic_homomorphism} with
  $G = G(\Zt^{*(m-1)},\N)$, $H = G(\Zt^{*n},\N)$ and $f$ as above to obtain a
  Borel set $A$ saturated under the connectedness relation of $H$ so $H
  \restriction A$ is $n$-regular and $\chi_B(H \restriction A) = m$. 
\end{proof}

The only case we know of where Theorem~\ref{chromnum_free_product} gives a
sharp lower bound for the chromatic number of $G(\Gamma * \Delta,\N)$ is
when $\Gamma$ and $\Delta$ are in the class $\mathcal{C}$ we have
discussed above. However, it is open whether the lower bound of
Theorem~\ref{chromnum_free_product} can ever be exceeded. 

\begin{question}\label{exceed}
  Are there finitely generated marked groups $\Gamma$ and $\Delta$ such
  that $\chi_B(G(\Gamma
  * \Delta, \N)) > \chi_B(G(\Gamma,\N)) + \chi_B(G(\Delta,\N)) - 1$?
\end{question}

Now there is another obvious upper bound on the Borel chromatic number of 
$G(\Gamma * \Delta,\N)$ which is better in some cases than that of
Theorem~\ref{KST_ncoloring}:

\begin{prop}\label{chrom_prod} If $\Gamma$ and $\Delta$ are finitely
generated marked groups, then $\chi_B(G(\Gamma * \Delta,\N)) \leq \chi_B(G(\Gamma,\N))
\chi_B(G(\Delta,\N))$
\end{prop}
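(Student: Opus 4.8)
The plan is to split the edges of $G(\Gamma * \Delta, \N)$ according to the type of generator that produces them and color each part separately. Write $p = \chi_B(G(\Gamma,\N))$ and $q = \chi_B(G(\Delta,\N))$. Since the generators of $\Gamma * \Delta$ are exactly the generators of $\Gamma$ together with those of $\Delta$, the graph $G(\Gamma * \Delta, \N)$ on $\Free(\N^{\Gamma * \Delta})$ is the union of two Borel subgraphs $G_\Gamma$ and $G_\Delta$, where $x \mrel{G_\Gamma} y$ iff some generator $\gamma \in \Gamma$ satisfies $\gamma \cdot x = y$ or $\gamma \cdot y = x$, and $G_\Delta$ is defined symmetrically. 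If I can find a Borel $p$-coloring $c_1$ of $G_\Gamma$ and a Borel $q$-coloring $c_2$ of $G_\Delta$, then the product map $x \mapsto (c_1(x), c_2(x))$ is a Borel coloring of $G(\Gamma * \Delta, \N)$ using $pq$ colors: any edge of $G(\Gamma * \Delta, \N)$ lies in $G_\Gamma$ or in $G_\Delta$, so at least one coordinate of the pair is forced to change across it.

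It remains to bound $\chi_B(G_\Gamma)$ by $p$ and $\chi_B(G_\Delta)$ by $q$. Here the key point is that $G_\Gamma$ is exactly the graph associated to a \emph{free} Borel action of $\Gamma$. Indeed, restricting the left shift action of $\Gamma * \Delta$ to the subgroup $\Gamma$ gives a Borel action of $\Gamma$ on $\Free(\N^{\Gamma * \Delta})$, and this action is free: every nonidentity element of $\Gamma$ is a nonidentity element of $\Gamma * \Delta$, so it moves every point of $\Free(\N^{\Gamma * \Delta})$. The graph $G_\Gamma$ is precisely the graph $G^X_\Gamma$ associated to this action with $X = \Free(\N^{\Gamma * \Delta})$. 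Therefore, by the fact discussed in the introduction that $G(\Gamma, \N)$ attains the maximum Borel chromatic number among all graphs arising from free Borel actions of $\Gamma$ (which follows from \cite{MR1900547}*{Theorem 5.4}), we get $\chi_B(G_\Gamma) \leq \chi_B(G(\Gamma,\N)) = p$. The identical argument with the roles of $\Gamma$ and $\Delta$ interchanged gives $\chi_B(G_\Delta) \leq q$, and combining the two colorings as above yields the claimed bound.

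The only genuinely subtle point is the freeness of the restricted action, and it is worth flagging why a more naive approach fails. One might hope to pull back a coloring of $G(\Gamma,\N)$ directly along the restriction map $x \mapsto x \restrict \Gamma$ from $\Free(\N^{\Gamma * \Delta})$ to $\N^\Gamma$; this map is $\Gamma$-equivariant, but it need not land in $\Free(\N^\Gamma)$, since $\gamma \cdot x \neq x$ on $\N^{\Gamma * \Delta}$ can be witnessed at a coordinate outside $\Gamma$. Working with the abstract free $\Gamma$-action on $\Free(\N^{\Gamma * \Delta})$ and invoking universality sidesteps this issue entirely, which is why I expect the argument to be short once the decomposition is set up.
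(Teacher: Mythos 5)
Your proof is correct and follows essentially the same route as the paper: decompose $G(\Gamma*\Delta,\N)$ into the subgraphs $G_\Gamma$ and $G_\Delta$, bound each one's Borel chromatic number via the universality of $G(\Gamma,\N)$ (resp.\ $G(\Delta,\N)$) for free Borel actions, and color with pairs. Your added remark about why the naive restriction map $x \mapsto x\restrict\Gamma$ fails is a useful clarification but does not change the argument.
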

\begin{proof}
We can decompose $G(\Gamma * \Delta,X)$ as the
disjoint union of two Borel graphs $G_\Gamma$ and $G_\Delta$ given by the
edges corresponding to generators of $\Gamma$ and $\Delta$ respectively.
Since $G_\Gamma$ and $G_\Delta$ are induced by free actions of $\Gamma$ and
$\Delta$, their Borel chromatic numbers are less than or equal to
$\chi_B(G(\Gamma,\N))$ and
$\chi_B(G(\Delta,\N))$ and hence we can use pairs of these colors to color
\mbox{$G(\Gamma * \Delta,\N)$}. 
\end{proof}

It is likewise open whether this upper bound can ever be achieved.

\begin{question}\label{product_question}
  Are there nontrivial finitely
  generated marked groups $\Gamma$ and $\Delta$ such that
  $\chi_B(G(\Gamma * \Delta, \N)) =  \chi_B(G(\Gamma,\N))
  \chi_B(G(\Delta,\N))$?
\end{question}

A positive answer to this question would also give a positive answer to
Question~\ref{exceed}. It seems natural to believe
Question~\ref{product_question} has a positive answer in cases where
$G(\Gamma * \Delta,\N)$ is $n$-regular and $\chi_B(G(\Gamma,\N))
\chi_B(G(\Delta,\N)) \leq n$, so that the bound of
Proposition~\ref{chrom_prod} is better than that of
Theorem~\ref{KST_ncoloring}. For example, if $m > 2$ is even, and we
generate $\Z/m\Z$ by a single element, then $G(\Z/m\Z * \Z/m\Z,\N)$ is
$4$-regular and has Borel chromatic number $\leq 4$ by
Proposition~\ref{chrom_prod}. Likewise, $\Z^n$ is another source of such
examples, since $G(\Z^n,\N)$ is a $2n$-regular Borel graph with
$\chi_B(G(\Z^n,\N)) \leq 4$ by~\cite{GaoJackson}. 

Next, we turn to matchings and edge colorings.  
We begin with the 
following theorem on disjoint complete sections. 

\begin{thm} \label{nodcs}
  Let $\Gamma$ and $\Delta$ be countable groups. Let $E_\Gamma$ be the
  equivalence relation on $\Free(\N^{\Gamma * \Delta})$ where $x
  \E_\Gamma y$ if there exists a $\gamma \in \Gamma$ such that
  \mbox{$\gamma \cdot x = y$}.
  Define $E_\Delta$ analogously. Then $E_\Gamma$ and $E_\Delta$ do not have
  Borel disjoint complete sections.
\end{thm}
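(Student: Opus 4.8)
The plan is to assume for contradiction that there exist disjoint Borel sets $A, B \subset \Free(\N^{\Gamma * \Delta})$ with $A$ a complete section for $E_\Gamma$ and $B$ a complete section for $E_\Delta$, and then to derive a contradiction from a single application of the Main Lemma (Lemma~\ref{main_lemma}). The crucial choice is to feed the Main Lemma the set $B$ rather than $A$, and to exploit that disjointness gives $B \subset \comp{A}$. The point to keep in mind is that $E_\Gamma$ tracks $\Gamma$-orbits and $E_\Delta$ tracks $\Delta$-orbits, so trying to contradict a single complete section directly fails because of the mismatch between the two relations; it is precisely the disjointness of $A$ and $B$ that lets the two cases of the Main Lemma each collapse onto one of the two hypotheses.

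Applying Lemma~\ref{main_lemma} to the Borel set $B$, one of two cases holds. In the first case there is an injective $\Gamma$-equivariant Borel map $f : \Free(\N^\Gamma) \to \Free(\N^{\Gamma * \Delta})$ with $\ran(f) \subset B$. Since $f$ is $\Gamma$-equivariant and the free part is $\Gamma$-invariant, the set $\ran(f)$ is $\Gamma$-invariant, so for any $x \in \Free(\N^\Gamma)$ the entire $E_\Gamma$-class of $f(x)$ lies in $\ran(f) \subset B \subset \comp{A}$. Hence this $E_\Gamma$-class misses $A$ entirely, contradicting the assumption that $A$ is a complete section for $E_\Gamma$. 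In the second case there is an injective $\Delta$-equivariant Borel map $f : \Free(\N^\Delta) \to \Free(\N^{\Gamma * \Delta})$ with $\ran(f) \subset \comp{B}$. By the symmetric reasoning $\ran(f)$ is $\Delta$-invariant, so the $E_\Delta$-class of any $f(x)$ lies in $\comp{B}$ and therefore misses $B$, contradicting that $B$ is a complete section for $E_\Delta$. Either way we reach a contradiction, so no such $A$ and $B$ exist.

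The only point requiring care is that the domains $\Free(\N^\Gamma)$ and $\Free(\N^\Delta)$ are nonempty, so that the map $f$ actually produces a point whose orbit yields the contradiction; this holds for any countable group, since any injective $y : \Gamma \to \N$ is a free point of the shift action. Beyond this there is essentially no obstacle: the Main Lemma carries all of the combinatorial and determinacy-theoretic weight, and the entire content of the argument is the observation that a $\Gamma$-equivariant map into $B \subset \comp{A}$ manufactures a full $E_\Gamma$-class avoiding $A$, while a $\Delta$-equivariant map into $\comp{B}$ manufactures a full $E_\Delta$-class avoiding $B$. I would remark that the proof is genuinely symmetric in $\Gamma$ and $\Delta$: one could equally apply the Main Lemma to $\comp{A}$ and use $A \subset \comp{B}$, which simply interchanges which case appeals to disjointness.
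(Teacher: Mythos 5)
Your proposal is correct and takes essentially the same approach as the paper: apply the Main Lemma to one of the putative complete sections, use equivariance to see that $\ran(f)$ is a union of $\Gamma$-orbits (resp.\ $\Delta$-orbits), and conclude that a full $E_\Gamma$-class avoids $A$ or a full $E_\Delta$-class avoids $B$. The paper phrases this slightly more compactly by applying the lemma to an arbitrary Borel set and noting that no single set can meet every $E_\Delta$-class while its complement meets every $E_\Gamma$-class, but the argument is the same.
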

\begin{proof}
  Let $A$ be any Borel subset of $\Free(\N^{\Gamma *
  \Delta})$. Then the range of the $f$ produced by
  Lemma~\ref{main_lemma} is 
  either an $E_\Gamma$-invariant set contained in $A$, or an
  $E_\Delta$-invariant set contained in the complement of $A$. Hence, $A$
  cannot simultaneously meet every $E_\Delta$ class and have its complement
  meet every $E_\Gamma$-class.
\end{proof}

We now use this fact to obtain a couple of results on matchings and
edge colorings of Borel bipartite graphs.

\begin{thm}\label{matchings_a}
  For every $n > 1$, there exists an $n$-regular acyclic Borel bipartite
  graph with no Borel perfect matching.
\end{thm}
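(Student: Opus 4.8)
The plan is to realize the desired graph as the Bass--Serre (coset) tree of a free product of two finite cyclic groups, and to show that a Borel perfect matching would produce a common Borel transversal, which in turn yields disjoint complete sections forbidden by Theorem~\ref{nodcs}. Concretely, I would fix $\Gamma = \Delta = \Z/n\Z$ and work on $Z = \Free(\N^{\Gamma * \Delta})$ with $E_\Gamma$ and $E_\Delta$ as in Theorem~\ref{nodcs}. Since $E_\Gamma$ and $E_\Delta$ are finite Borel equivalence relations (each class has exactly $n$ points), they are smooth, so I may fix Borel transversals $T_\Gamma$ and $T_\Delta$. I define the bipartite graph $G$ on the disjoint union $T_\Gamma \sqcup T_\Delta$ by declaring $t \in T_\Gamma$ adjacent to $s \in T_\Delta$ exactly when $[t]_{E_\Gamma} \inters [s]_{E_\Delta} \neq \emptyset$. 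By freeness of the action, any $E_\Gamma$-class and any $E_\Delta$-class meet in at most one point, so the edges of $G$ are in Borel bijection with the points of $Z$: the point $z$ is the edge joining the representatives of $[z]_{E_\Gamma}$ and $[z]_{E_\Delta}$.

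Next I would verify the structural properties. A vertex of $T_\Gamma$ representing a $\Gamma$-class $C$ is adjacent to exactly the $|\Gamma| = n$ distinct $\Delta$-classes hit by the $n$ points of $C$, these being distinct because the action is free; hence $G$ is $n$-regular, and symmetrically for $T_\Delta$. The graph is bipartite with the Borel bipartition $T_\Gamma \sqcup T_\Delta$ by construction. For acyclicity, a cycle in $G$ would alternate $\Gamma$- and $\Delta$-moves along a single orbit and thereby produce a nontrivial reduced word of $\Gamma * \Delta$ equal to the identity; since $G$ restricted to each connected component is precisely the Bass--Serre tree of $\Gamma * \Delta$, no such cycle exists, so $G$ is acyclic.

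Finally I would run the reduction. Suppose $M$ is a Borel perfect matching of $G$, and let $S \subset Z$ be the Borel set of points that occur as matched edges. Since $M$ matches each vertex to exactly one incident edge, $S$ meets every $E_\Gamma$-class in exactly one point and every $E_\Delta$-class in exactly one point; that is, $S$ is a common Borel transversal. Setting $A = S$ and $B = \comp{S}$, the set $A$ is a complete section for $E_\Gamma$, and because $n > 1$ each $E_\Delta$-class has a point outside $S$, so $B$ is a complete section for $E_\Delta$; moreover $A \inters B = \emptyset$. This contradicts Theorem~\ref{nodcs}, so $G$ can have no Borel perfect matching.

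I expect the main subtlety to be the bookkeeping that makes $G$ genuinely Borel together with the acyclicity argument, i.e.\ confirming that passing to Borel transversals of the finite equivalence relations $E_\Gamma, E_\Delta$ gives a faithful Borel presentation of the coset tree and that the component graphs really are trees. The matching-to-sections reduction itself is immediate once the reformulation of a perfect matching as a common transversal, and of disjoint complete sections as a transversal together with its complement, is in place.
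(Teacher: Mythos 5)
Your proposal is correct and follows essentially the same route as the paper: the paper takes the intersection graph on the standard Borel space of $E_\Gamma$- and $E_\Delta$-classes (with $\Gamma=\Delta=\Z/n\Z$), identifies edges with points of $\Free(\N^{\Gamma*\Delta})$, and derives disjoint complete sections from a perfect matching, contradicting Theorem~\ref{nodcs}. Your use of Borel transversals in place of the space of equivalence classes is only a cosmetic repackaging of the same graph, and your verifications of regularity, bipartiteness, and acyclicity fill in details the paper leaves implicit.
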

\begin{proof}
  Let $\Gamma = \Delta = \Z/n\Z$ in Theorem~\ref{nodcs}. Let $Y \subset
  [\Free(\N^{\Gamma*\Delta})]^n$ be the standard Borel space consisting of the
  equivalence classes of $E_\Gamma$ and $E_\Delta$. Let $G$ be the
  intersection graph on $Y$. This is an
  $n$-regular acyclic Borel bipartite graph. If $M \subset Y \times Y$ was a Borel perfect matching for
  $G$, then setting \[A = \{x \in \N^{\Gamma * \Delta} : \exists (R,S) \in M \text{ such that
  $\{x\} = R \inters S$}\},\] we see that $A$ and the complement of $A$
  would be Borel disjoint complete sections for $E_\Gamma$ and $E_\Delta$,
  contradicting Theorem~\ref{nodcs}.
\end{proof}

The graph used above was suggested as a candidate for
a graph with no perfect matching by Conley and Kechris \cite{MR3019078}. Lyons and
Nazarov \cite{MR2825538} have shown that in the case $n = 3$, this graph
has a measurable matching with respect to a natural measure.

\begin{thm}\label{maximal_edge}
  For every $n$, there exists an $n$-regular acyclic Borel bipartite graph
  with no Borel edge coloring with $2n-2$ colors.
\end{thm}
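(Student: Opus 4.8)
The plan is to reuse verbatim the graph $G$ built in the proof of Theorem~\ref{matchings_a}, and then to extract Borel disjoint complete sections from a hypothetical edge coloring with $2n-2$ colors by a pigeonhole argument on the color set, contradicting Theorem~\ref{nodcs}. I would fix $n > 1$ (the case $n = 1$ is trivial: a $1$-regular graph has edges, and edges cannot be colored with $2n-2 = 0$ colors), set $\Gamma = \Delta = \Z/n\Z$ as in Theorem~\ref{nodcs}, let $Y \subset [\Free(\N^{\Gamma*\Delta})]^n$ be the set of $E_\Gamma$- and $E_\Delta$-classes, and take $G$ to be the intersection graph on $Y$. As recorded in Theorem~\ref{matchings_a}, this $G$ is $n$-regular, acyclic, and Borel bipartite, so it is a candidate of the required type.

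The first step is to identify the edges of $G$ with the points of $\Free(\N^{\Gamma*\Delta})$. Each point $x$ lies in exactly one $E_\Gamma$-class $R$ and exactly one $E_\Delta$-class $S$, and since a $\Gamma$-orbit and a $\Delta$-orbit meet in at most one point we have $R \inters S = \{x\}$; this gives a Borel bijection between points $x$ and edges $(R,S)$ of $G$. Under this identification, two distinct edges $x \neq x'$ share a vertex of $G$ precisely when $x \E_\Gamma x'$ or $x \E_\Delta x'$, so the dual graph $\check{G}$ is Borel isomorphic to the graph on $\Free(\N^{\Gamma*\Delta})$ with that adjacency. In particular, each $E_\Gamma$-class and each $E_\Delta$-class is a clique of size $n$ in $\check{G}$.

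Next, suppose toward a contradiction that $c : \Free(\N^{\Gamma*\Delta}) \to \{0,1,\ldots,2n-3\}$ is a Borel edge coloring of $G$ with $2n-2$ colors, that is, a Borel proper coloring of $\check{G}$. Since every $E_\Gamma$-class and every $E_\Delta$-class is an $n$-clique, $c$ assigns $n$ distinct colors on each such class. I would then partition the color set into $L = \{0,\ldots,n-2\}$ and $U = \{n-1,\ldots,2n-3\}$, each of size $n-1$, and set $A = \{x : c(x) \in U\}$ and $B = \{x : c(x) \in L\}$. These are disjoint Borel sets. Because $|L| = n-1 < n$, the $n$ distinct colors occurring on any $E_\Gamma$-class cannot all lie in $L$, so $A$ meets every $E_\Gamma$-class; symmetrically, since $|U| = n-1 < n$, the set $B$ meets every $E_\Delta$-class. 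Hence $A$ and $B$ are Borel disjoint complete sections for $E_\Gamma$ and $E_\Delta$, contradicting Theorem~\ref{nodcs}, and so no such $c$ exists.

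The only nonroutine step is the color split into two blocks of size exactly $n-1$: this is precisely the threshold strictly below $n$ that forces the $n$ distinct colors on each forbidden $n$-clique to appear in both blocks, which is what converts a $(2n-2)$-coloring into disjoint complete sections. I expect this to be the main (indeed only) obstacle to spot; everything else—the realization of $G$, its acyclicity and bipartiteness, and the edge--point identification giving $\check{G}$—is inherited directly from the setup of Theorem~\ref{matchings_a}.
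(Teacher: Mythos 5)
Your proof is correct and follows essentially the same route as the paper: take the intersection graph from Theorem~\ref{matchings_a}, split the $2n-2$ colors into two blocks of size $n-1$, and use the pigeonhole principle on the $n$ distinct colors appearing at each vertex to extract disjoint complete sections contradicting Theorem~\ref{nodcs}. The only (immaterial) difference is that the paper partitions the colors into even and odd rather than low and high.
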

\begin{proof}
  We use the same graph as in Theorem~\ref{matchings_a}. Suppose for a
  contradiction that it had a Borel edge coloring with $2n-2$ colors. By
  the pigeonhole principle, each vertex of $G$ must be incident to at least
  one edge assigned an even color, and at least one edge assigned an odd
  color. Let $A$ be the set of points $x$ in $\Free(\N^{\Gamma *
  \Delta})$ such that $\{x\} = R \inters S$ where $R$ is an equivalence
  class of $E_\Gamma$, $S$ is an equivalence class of $E_\Delta$, and the
  edge $(R,S)$ in $G$ is colored with an even color. Then $A$ is a complete
  section for $E_\Gamma$, and the complement of $A$ is a complete section
  for $E_\Delta$, contradicting Theorem~\ref{nodcs}.
\end{proof}

Now we can give an exact characterization of the possible Borel edge chromatic
numbers of $n$-regular acyclic Borel bipartite graphs.

\begin{thm}\label{edge_coloring_a}
  For every $n \geq 1$ and every $m \in \{n, \ldots, 2n-1\}$, there is an 
  $n$-regular acyclic Borel bipartite graph $G$ such that
  $\chi'_B(G) = m$.
\end{thm}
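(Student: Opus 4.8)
The plan is to realize each $m$ by a carefully chosen biregular ``Bass--Serre'' graph, padded to be $n$-regular. The case $m = n$ is handled separately and easily: one takes a smooth $n$-regular acyclic Borel bipartite graph each of whose components is a copy of the $n$-regular tree $T$ (for instance $\cantor \times T$ with the graph acting within each fibre $\{z\} \times T$, and the bipartition given by the parity of the distance to a fixed basepoint of $T$). Smoothness lets one apply König's edge-coloring theorem uniformly across components to obtain a Borel $n$-edge-coloring, and no fewer than $n$ colors can suffice since the graph is $n$-regular. So assume $n + 1 \le m \le 2n - 1$ and set $a = n$ and $b = m - n + 1$, so that $2 \le b \le n$ and $a + b - 1 = m$.

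First I would analyze the intersection graph $G_0$ on $\Free(\N^{\Z/a\Z * \Z/b\Z})$, whose vertices are the $E_\Gamma$- and $E_\Delta$-classes (for $\Gamma = \Z/a\Z$, $\Delta = \Z/b\Z$) and in which a $\Gamma$-class $R$ and a $\Delta$-class $S$ are joined when $R \inters S \neq \emptyset$. Each connected component of $G_0$ is a copy of the Bass--Serre tree of $\Z/a\Z * \Z/b\Z$, so $G_0$ is acyclic and bipartite, with the $\Gamma$-classes having degree $a$ and the $\Delta$-classes degree $b$. I claim $\chi'_B(G_0) = a + b - 1 = m$. The upper bound is immediate from Theorem~\ref{KST_ncoloring}, since $\check{G_0}$ is $(a+b-2)$-regular. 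For the lower bound I would generalize the pigeonhole argument of Theorem~\ref{maximal_edge}: given a Borel edge coloring with only $a + b - 2$ colors, partition the colors into a set $P$ of size $b - 1$ and a set $Q$ of size $a - 1$. Since each $\Gamma$-class has $a > |Q|$ distinctly colored incident edges, it carries an edge colored from $P$; likewise each $\Delta$-class carries an edge colored from $Q$. Letting $A$ be the set of points $x = R \inters S$ for which the edge $(R,S)$ receives a color in $P$, one checks that $A$ is a complete section for $E_\Gamma$ and $\comp{A}$ is a complete section for $E_\Delta$, contradicting Theorem~\ref{nodcs}.

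Next I would pad $G_0$ to an $n$-regular graph $G$. Since $a = n$, the $\Gamma$-side is already full, so I only raise the degree of each $\Delta$-class from $b$ to $n$ by adding $n - b$ edges running to a fresh uncountable standard Borel space of auxiliary vertices. Exactly as in the construction of Theorem~\ref{possible_chromnum}, I would arrange this extension so that $G$ is $n$-regular, acyclic, and bipartite, and so that the added part $G \setminus G_0$ is smooth---in fact a smooth forest in which the auxiliary vertices meet no $G_0$-edge. Because no edge is added between two vertices of $G_0$, the graph $G_0$ is an induced subgraph of $G$; hence $\check{G_0}$ is an induced subgraph of $\check{G}$ and $\chi'_B(G) \geq \chi'_B(G_0) = m$.

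It remains to show $\chi'_B(G) \le m$, which I expect to be the main obstacle. I would first fix a Borel edge coloring of $G_0$ with the $m$ colors $\{0, \ldots, m-1\}$. At each $\Delta$-class this coloring uses only $b$ colors, leaving $m - b = n - 1$ colors free, and $n - 1 \ge n - b$ is exactly enough to color the $n - b$ new edges there; the auxiliary vertices begin uncolored and have the full palette available. Since $G \setminus G_0$ is a smooth forest, it suffices to color each of its components separately in a uniformly Borel fashion, and on a tree one can color the edges greedily from a root outward, at each step choosing a color for an edge from the set of colors still free at both of its endpoints, which the count above keeps nonempty. Carrying this out uniformly over the smooth component structure yields a Borel edge coloring of $G$ with $m$ colors, so $\chi'_B(G) = m$. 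The delicate points are verifying that the greedy extension never exhausts the palette---which reduces to the inequality $m \ge n$ together with the forest structure of the padding---and checking that the padding can be chosen smooth, acyclic, and bipartite while making $G$ exactly $n$-regular; both are routine but must be set up with care.
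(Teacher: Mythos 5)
Your proposal is correct, but it takes a genuinely different route from the paper. The paper starts from the single graph $G$ of Theorem~\ref{maximal_edge} (the intersection graph for $\Z/n\Z * \Z/n\Z$, with $\chi'_B(G)=2n-1$), fixes a Borel $(2n-1)$-edge-coloring, deletes the edges in $(2n-1)-m$ of the color classes, and observes that the remaining graph has Borel edge chromatic number exactly $m$: it clearly has an $m$-coloring, and an $(m-1)$-coloring together with the deleted color classes would reassemble into a $(2n-2)$-coloring of $G$, contradicting Theorem~\ref{maximal_edge}. It then pads back to $n$-regularity by a smooth extension, exactly as you do. You instead build, for each $m$, the $(n,\,m-n+1)$-biregular intersection graph for $\Z/n\Z * \Z/(m-n+1)\Z$ and prove its edge chromatic number is $m$ directly, by generalizing the pigeonhole argument of Theorem~\ref{maximal_edge} to the biregular setting (partitioning $a+b-2$ colors into blocks of sizes $b-1$ and $a-1$ and invoking Theorem~\ref{nodcs}); that generalization is valid, since in the free part each $E_\Gamma$-class meets each $E_\Delta$-class in at most one point, so your set $A$ and its complement really are disjoint complete sections. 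Both arguments rest on Theorem~\ref{nodcs} and both end with the same smooth padding and greedy extension (your count $m-b=n-1\geq n-b$ is the right one). The trade-off: the paper's color-deletion trick reuses Theorem~\ref{maximal_edge} as a black box and handles $m=n$ uniformly, whereas your construction needs the separate smooth/K\"onig argument for $m=n$ but produces more natural witnesses --- biregular Bass--Serre trees --- and makes explicit which free product realizes each value of $m$.
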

\begin{proof}
  Let $G$ be an $n$-regular acyclic Borel graph on $X$ with $\chi'(G) \geq
  2n-1$ by Theorem~\ref{maximal_edge}. Let $m$ be an element of $\{n,
  \ldots, 2n-1\}$. There is an edge coloring of $G$ using $2n-1$ colors by
  Theorem~\ref{KST_ncoloring} and our discussion in the introduction before
  Theorem~\ref{edge_coloring}. Let $G' \subset G$ be the set of
  edges colored using one of the first $m$ colors. Then clearly the graph
  $G'$ on $X$ has a Borel edge coloring with $m$ colors. It cannot have a
  Borel edge coloring with $m-1$ colors as this would give an edge coloring
  of $G$ with $2n-2$ colors. Now let $Y$ be an uncountable standard Borel
  space, and let $H$ be an extension of $G'$ to an $n$-regular Borel
  bipartite graph $H$ on $X \disjointunion Y$ such that each connected
  component of $H \setminus G'$ has at most one point in $X$.
  Then $\chi'_B(H) = m$.
\end{proof}

In the theorems we have proved above, we have mostly worked on spaces of
the form $\Free(\N^\Gamma)$. As we described in the introduction, this is
quite natural since the graph $G(\Gamma,\N)$ achieves the maximal chromatic
number of all Borel graphs generated by free actions of $\Gamma$. 
However, it is interesting
to ask what happens when we change our base space to be finite. For
example, it is an open question whether there is a dichotomy characterizing when a pair of
countable Borel equivalence relations admits Borel disjoint complete sections,
and here we would like to know 
whether Theorem~\ref{nodcs} remains true when we change $\N$ to be
some finite $k$. As we will see, this is the case when $k = 3$, but it is open
for $k = 2$.
Likewise, we would like to compute the Borel chromatic number of graphs of
the form $G(\Gamma,k)$ for $k \geq 2$. Clearly, if $k \leq m$ are both at
least $2$, then $\chi_B(G(\Gamma,k)) \leq
\chi_B(G(\Gamma,m)) \leq \chi_B(G(\Gamma,\N))$. 
It is open whether these chromatic numbers can ever be
different\footnote{Recently, Seward and Tucker-Drob~\cite{1402.4184} have answered this
question in the negative. They show that for every countable group
$\Gamma$, there is an equivariant Borel function from $\Free(\N^{\Gamma})
\to \Free(2^{\Gamma})$.} :

\begin{question}\label{base_space}
  Does there exist a finitely generated marked group $\Gamma$ such that $\chi_B(G(\Gamma,\N))
  \neq \chi_B(G(\Gamma,2))$?
\end{question}

Certainly, there are no obvious tools to show such chromatic numbers can be
different. One approach to showing that these chromatic numbers are
always the same would be to show the existence of a Borel homomorphism from
$G(\Gamma,\N)$ to $G(\Gamma,2)$. To do this it would be sufficient to find
an equivariant Borel function from $\Free(\N^\Gamma)$ to $\Free(2^\Gamma)$.
We note that such a function could not be injective in the case when
$\Gamma$ is sofic (which includes all the examples of groups we have
discussed). This follows from results of Bowen on sofic entropy, as pointed
out by Thomas~{\cite[Theorem 6.11]{MR2914864}}. 

In the measurable context, when $(X,\mu)$ is a standard probability space, we can say a bit more 
about the $\mu^\Gamma$-measurable chromatic number of graphs of the form
$G(\Gamma,X)$, as 
$X$ and $\mu$ vary. Recall from
\cite{MR3019078} that the \define{$\mu$-measurable chromatic number} of a
Borel graph $G$ on a standard probability space $(X,\mu)$ is the least
cardinality of a Polish space $Y$ such that there is a $\mu$-measurable
coloring $c: X \to Y$ of $G$. Now given Borel actions $a$ and $b$ of $\Gamma$ on the
Borel probability spaces $(X,\mu)$ and $(Y,\nu)$ respectively, a \define{factor map}
from $a$ to $b$ is a $\mu$-measurable equivariant function $f: X \to
Y$ such that the pushforward of $\mu$ under $f$ is $\nu$.
Bowen~{\cite[Theorem 1.1]{MR2811154}} has shown that if $\Gamma$ contains a
nonabelian free subgroup, then given any nontrivial probability measures
$\mu$ and $\nu$ on the standard Borel spaces $X$ and $Y$, there is a factor map
from the left shift action of $\Gamma$ on $(X^\Gamma,\mu^\Gamma)$ to the
left shift action of $\Gamma$ on $(Y^\Gamma,\nu^\Gamma)$. Hence, the
$\mu^\Gamma$-measurable chromatic number of $G(\Gamma,X)$ is equal to the
$\nu^\Gamma$-measurable chromatic number of $G(\Gamma,Y)$ for all such
$(X,\mu)$ and $(Y,\nu)$. For some more results of this type for nonamenable
groups in general, see \cite{MR2931910}.
 
We now return to the pure Borel context, and end this section by noting that we
have the following variant of Lemma~\ref{main_lemma} for finite base
spaces. This lemma can be proved using a nearly identical argument to that
of Lemma~\ref{main_lemma} except changing the application of the
pigeon-hole principle in the obvious way. From this, one can derive
versions of all of the Theorems above for finite base spaces. For example,
we have $\chi_B(G(\Gamma*\Delta,m+n-1)) \geq \chi_B(G(\Gamma,m)) +
\chi_B(G(\Delta,n)) -1$, and Theorem~\ref{nodcs} and its corollaries hold
using $3$ instead of $\N$.

\begin{lemma}\label{main_lemma_fb}
  Let $\Gamma,\Delta$ be countable groups and $m,n \geq 2$ be finite.
  If $A \subset \Free((m+n-1)^{\Gamma * \Delta})$ is any Borel
  set, then at least one of the following holds:
  \begin{enumerate}
  \item There is an continuous injective function $f: \Free(n^\Gamma) \to
  \Free((m+n-1)^{\Gamma * \Delta})$ that is
  equivariant with respect to the left shift action of $\Gamma$ on these
spaces and
  such that $\ran(f) \subset A$.

  \item There is an continuous injective function $f: \Free(m^\Delta) \to
  \Free((m+n-1)^{\Gamma * \Delta})$ that is
  equivariant with respect to the left shift action of $\Delta$ on these
spaces and
  such that $\ran(f) \subset \comp{A}$.
  \end{enumerate}

\end{lemma}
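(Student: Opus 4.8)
The plan is to observe that Lemma~\ref{main_lemma_fb} is not a new theorem but rather a direct adaptation of the proof of Lemma~\ref{main_lemma}, so I would present it as a careful bookkeeping of where finiteness of the base space enters that argument. The entire machinery---the set $Y$, the turn function $t$, the games $G^B_k$, the winning condition involving witnesses to $y \notin Y$, and the auxiliary set $C$ obtained from Lemma~\ref{partition_non_independent}---transfers verbatim, since none of it depends on the base space being $\N$ as opposed to a finite set. The key structural facts (that player I defines $y$ on $\Gamma$-words and player II on $\Delta$-words, that $Y \cap \Free(\ell^{\Gamma*\Delta}) \neq \emptyset$ for the base space of size $\ell = m+n-1$, and that $C$ contains no nonempty $\Gamma$-invariant set) all hold with no change.

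The one genuine modification, as the excerpt itself signals, is in the application of the pigeon-hole principle. In the original proof, Borel determinacy gives a winner of $G^{B_A}_k$ for each $k \in \N$, and since there are infinitely many values of $k$, one player wins infinitely often, yielding an injection from $\Free(\N^\Gamma)$ or $\Free(\N^\Delta)$. With a finite base space of size $\ell = m+n-1$, I would instead play the games $G^{B_A}_k$ only for $k \in \{0,1,\ldots,\ell-1\}$. Among these $m+n-1$ values, either player I wins for at least $n$ of them or player II wins for at least $m$ of them, since if player I won for fewer than $n$ and player II for fewer than $m$ we would account for at most $(n-1)+(m-1) = m+n-2 < \ell$ values, a contradiction. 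In the first case, selecting $n$ values of $k$ on which player I wins gives, exactly as before, an equivariant Borel injection $f : \Free(n^\Gamma) \to \Free(\ell^{\Gamma*\Delta})$ with $\ran(f) \subset A$; in the second case we obtain $f : \Free(m^\Delta) \to \Free(\ell^{\Gamma*\Delta})$ with $\ran(f) \subset \comp{A}$.

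The construction of $f$ from the fixed winning strategies then proceeds identically: for a fixed $x \in \Free(n^\Gamma)$ one plays simultaneous instances of the games indexed by $\gamma \in \Gamma$, using the chosen reindexing $k : n \to \ell$ so that $f(x)(\gamma) = k(x(\gamma))$, moving for player I to maintain equivariance and letting the fixed player II strategies respond. The verification that $f(x) \in Y$ (by the same inductive argument over $\Gamma$-words and $\Delta$-words ordered by the turn function), that $f(x) \in B_A$, and finally that $f(x) \in A$ using $B_A = (A \cap Y) \cup C$ together with the fact that $C$ contains no nonempty $\Gamma$-invariant set, is word-for-word the same. Injectivity follows because distinct $x$ differ at some $\gamma$, forcing $f(x)(\gamma) = k(x(\gamma)) \neq k(x'(\gamma)) = f(x')(\gamma)$ as $k$ is injective on the chosen subset of values.

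I do not expect a genuine obstacle here, which is why the excerpt treats it in one sentence; the only point requiring slight care is the counting in the pigeon-hole step, namely verifying that the two cases ``$\geq n$ wins for I'' and ``$\geq m$ wins for II'' are genuinely exhaustive given the base-space size $m+n-1$, and that one has enough values of $k$ in each case to build an injection with the correct finite domain $\Free(n^\Gamma)$ or $\Free(m^\Delta)$. Everything else is inherited unchanged from the proof of Lemma~\ref{main_lemma}.
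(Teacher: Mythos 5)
Your overall plan is exactly the paper's: the paper gives no separate proof of Lemma~\ref{main_lemma_fb}, saying only that one repeats the proof of Lemma~\ref{main_lemma} ``changing the application of the pigeon-hole principle in the obvious way,'' and your counting $(n-1)+(m-1)=m+n-2<m+n-1$ is precisely the right replacement for ``one player wins infinitely often.''

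However, you have transposed the players in the case analysis, and as literally written each case yields the wrong conclusion. In the proof of Lemma~\ref{main_lemma}, it is \emph{player II} winning $G^{B_A}_k$ (for infinitely many $k$) that produces $f:\Free(\N^\Gamma)\to\Free(\N^{\Gamma*\Delta})$ with $\ran(f)\subset A$: player II's winning strategies are what force the outcome into $B_A$, while we dictate player I's moves on $\Gamma$-words to encode $x\restrict\Gamma$ via $f(x)(\gamma)=k(x(\gamma))$. Thus the number of $k$'s on which \emph{player II} wins must be at least the size of the base space of $\Free(n^\Gamma)$, i.e.\ at least $n$; symmetrically, player I winning for at least $m$ values of $k$ gives $f:\Free(m^\Delta)\to\Free((m+n-1)^{\Gamma*\Delta})$ with $\ran(f)\subset\comp{A}$. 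Your dichotomy should therefore read ``either player II wins for at least $n$ values of $k$ or player I wins for at least $m$ values of $k$'' (the same counting proves this), with the two alternatives of the lemma attached accordingly. If one instead follows your version literally --- player I winning for $\geq n$ values --- one would obtain a map from $\Free(n^\Delta)$ into $\comp{A}$, which is not one of the lemma's alternatives unless $m=n$. Your later paragraph, which fixes ``the player II strategies'' and controls player I's moves, is consistent with the paper and inconsistent with your own case labels, so this is a transposition to be corrected rather than a conceptual gap; with that swap the argument is the paper's intended one.
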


\section{Disjoint complete sections for measure and category}
\label{contrast}

We turn now to the question of whether Theorem~\ref{nodcs} can be proved
using purely measure theory or Baire category. In the case when $\Gamma =
\Delta = \Z/2\Z$, we can prove Theorem~\ref{nodcs} using either of these
two tools. If the generators of $\Gamma$ and $\Delta$ are $\alpha$ and
$\beta$, then any nontrivial product probability measure on $\Free(\N^{\Zt
* \Zt})$ has the property that the map $x \mapsto \alpha \beta \cdot x$ is
ergodic, and the two maps $x \mapsto \alpha \cdot x$ and $x \mapsto \beta
\cdot x$ are both measure preserving. This is enough to conclude the
Theorem~\ref{nodcs} in this case. We can similarly give a Baire category
argument using generic ergodicity. We will show that $\Gamma = \Delta =
\Z/2\Z$ is the only nontrivial pair of $\Gamma$ and $\Delta$ for which
measure or category can prove Theorem~\ref{nodcs}. 

We begin by showing that Borel disjoint complete sections exist in the
measure context for aperiodic countable Borel equivalence relations. Recall
that an equivalence relation is said to be \define{aperiodic} if all of its
equivalence classes are infinite. 

\begin{lemma}\label{lemma:measure_dcs}
  Let $\mu$ be a Borel probability measure on a standard Borel space $X$. Then if
  $E$ and $F$ are aperiodic countable Borel equivalence relations on $X$,
  there exist disjoint Borel sets $A$ and $B$ such that $A$ meets
  $\mu$-a.e. equivalence class of $E$ and $B$ meets $\mu$-a.e. equivalence
  class of $F$. 
\end{lemma}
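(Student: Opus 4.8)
The plan is to produce a single Borel set $A$ that is a complete section for $E$ modulo a null set and whose largest $F$-invariant subset is $\mu$-null, and then to take $B = \comp{A}$. This reduction is the whole point: if $I$ denotes the largest $F$-invariant subset of $A$ (the set of $x$ with $[x]_F \subset A$), then $\comp{A}$ meets every $F$-class except those contained entirely in $A$, and the union of the latter is exactly $I$. So $\comp{A}$ is a complete section for $F$ modulo the null set $I$, while $A$ is a complete section for $E$ modulo null and is disjoint from $B=\comp{A}$ by construction. Thus it suffices to arrange $\mu(I)=0$.

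To build such an $A$, I would start from the standard marker lemma for aperiodic countable Borel equivalence relations (see e.g.\ \cite{MR1900547}): since $E$ is aperiodic there is a decreasing sequence $A_0 \supset A_1 \supset \cdots$ of Borel complete sections for $E$ with $\bigcap_n A_n = \emptyset$. Writing $I_n$ for the largest $F$-invariant subset of $A_n$, the $I_n$ are decreasing with $\bigcap_n I_n = \emptyset$ (a class contained in every $A_n$ would lie in $\bigcap_n A_n$), so by continuity from above of the finite measure $\mu$ we get $\mu(I_n) \to 0$; in particular I can fix $A := A_N$ with $\mu(I_N)$ as small as desired. The set $I_N$ is $F$-invariant and $F\restrict I_N$ is again aperiodic, so applying the marker lemma to $F\restrict I_N$ produces a Borel $R \subset I_N$ meeting every $F$-class inside $I_N$ and of arbitrarily small measure. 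Removing $R$ from $A$ destroys the $F$-interior: any $F$-class surviving inside $A \setminus R$ would be contained in $A$, hence lie in $I_N$, hence meet $R$ — a contradiction — so the largest $F$-invariant subset of $A \setminus R$ is empty.

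The main obstacle is that trimming $A$ down to $A \setminus R$ can destroy its being a complete section for $E$: an $E$-class all of whose $A$-representatives happen to lie in $R \subset I_N$ is no longer met, and the exceptional $E$-classes, namely the set of $x$ with $[x]_E \inters A \subset I_N$, I would expect to be small but not null at a single stage. I would therefore iterate, observing that the residual failure is once again concentrated on an invariant region whose measure can be driven down geometrically using the freedom in the marker lemma: choosing the stage-$i$ correction $R_i$ with $\mu(R_i) < 2^{-i}$ makes the symmetric differences $A_i \symdiff A_{i+1}$ summable, so a Borel--Cantelli argument shows the sets $A_i$ stabilize $\mu$-almost everywhere, yielding a Borel limit $A$. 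Verifying that this limit simultaneously retains completeness for $E$ modulo null and has $\mu$-null largest $F$-invariant subset — that the two competing requirements survive together in the limit — is the delicate point, and it is exactly here that aperiodicity (infinite classes leaving room to trim) and the passage to ``modulo null'' rather than everywhere are used in an essential way.
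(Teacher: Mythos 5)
Your opening reduction (take $B = \comp{A}$ and arrange that the largest $F$-invariant subset of $A$ is null) is legitimate, and the observations that $\bigcap_n I_n = \emptyset$ forces $\mu(I_N) \to 0$ and that removing a complete section $R$ for $F \restrict I_N$ empties the $F$-interior are both correct. The gap is in the iteration. The set of $E$-classes destroyed by the trim is $\{x : [x]_E \inters A \subset R\}$, and the only a priori bound on it is containment in the $E$-saturation $[R]_E$. A countable Borel equivalence relation can blow up a set of measure $2^{-i}$ to a conull saturation (for ergodic $E$, every positive-measure set has conull saturation), so choosing $\mu(R_i) < 2^{-i}$ gives no control on the measure of the $E$-classes lost at stage $i$: the residual failure is \emph{not} driven down geometrically, and already the first trim can kill a non-null ($E$-invariant) set of classes no matter how small $\mu(I_N)$ and $\mu(R)$ are. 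Choosing $R$ to meet every $F$-class in $I_N$ while $A \setminus R$ still meets $\mu$-a.e.\ $E$-class is essentially the disjoint-complete-section problem over again, restricted to $I_N$, so the reduction is circular as stated. Separately, even granting that the $A_i$ stabilize $\mu$-a.e.\ by Borel--Cantelli, completeness is a property of saturations rather than points: an $E$-class can meet every $A_i$ in a moving set of representatives with empty limit, so $\mu(A_i \symdiff A) \to 0$ does not transfer ``complete section mod null'' to the limit without some monotonicity.

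The paper sidesteps both problems with one device. It fixes a decreasing sequence $C_0 \supset C_1 \supset \cdots$ of markers that are complete sections for \emph{both} $E$ and $F$ simultaneously, with empty intersection, and alternates: from an exact complete section $A_n$ for $E$ it passes to $B_{n+1} = \comp{(A_n \setminus C_{i_n})}$. Since $B_{n+1} \supset C_{i_n}$, it is automatically an exact complete section for $F$ --- no loss at all on the $F$ side --- while the $E$-side loss is measured by $\mu([A_n \setminus C_{i_n}]_E)$, which can be made $> 1 - 2^{-n}$ because the saturations $[A_n \setminus C_i]_E$ increase to $[A_n]_E = X$ as $i \to \infty$. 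That continuity of measure along increasing saturations is exactly the control your Borel--Cantelli step lacks, and the final sets $A$ and $B$ are increasing unions of the constructed pieces, which is why completeness mod null survives the passage to the limit there.
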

\begin{proof}
  It follows from the marker lemma~{\cite[Lemma 6.7]{MR2095154}} that we can
  find a decreasing sequence $C_0 \supset C_1 \supset \ldots$ of Borel sets
  that are each complete sections for both $E$ and $F$ and such that their
  intersection $\biginters C_i$ is empty.

  Note that for each $n$ and $\epsilon > 0$, there is $i > n$ such that
  \[\mu([C_n \setminus C_i]_E) > 1 - \epsilon \text{ and } \mu([C_n
  \setminus C_i]_F) > 1 - \epsilon \]
  It follows that we can find a strictly increasing sequence $(i_k)_k$ such
  that 
  \[\mu([ \bigunion_{k \geq 0} (C_{i_{2k}} \setminus C_{i_{2k+1}})]_E) = 1
  \text{ and } \mu([ \bigunion_{k \geq 0} (C_{i_{2k+1}} \setminus
  C_{i_{2k+2}})]_F) = 1 \]
  Now set $A = \union_{k \geq 0} (C_{i_{2k}} \setminus C_{i_{2k+1}})$ and
  $B = \union_{k \geq 0} (C_{i_{2k+1}} \setminus C_{i_{2k+2}})$. 
\end{proof}

Our goal is to extend this result to all pairs of countable Borel
equivalence relations $E$ and $F$ where every $E$ class has at least $2$
elements and every $F$-class has at least $3$ elements. We will do this by
reducing it to the case we have already proved above. More precisely, in
Theorem~\ref{dcs_equiv_1} we will show that several types of problems are
equivalent in a Borel way to the problem of finding Borel disjoint
complete sections for pairs of such equivalence relations. That is, to each
instance of each type of problem, we will demonstrate
how to construct an instance of each of the other types so that a
solution to these problems can be transformed
in a Borel way into a solution of the original problem. The exact sense
in which this is done will be clear in our proof. 
Of course, the idea of reductions between combinatorial
problems has a long history. For an example of recent work with a similar
effective flavor, see \cite{1212.0157}.

We first introduce another combinatorial problem. 
If $G$ is a graph on $X$, an \define{antimatching} of $G$ is a function
\mbox{$f:X \to X$} such that for all $x \in X$, we have $x \mathrel{G}
f(x)$ and $f(f(x)) \neq x$. A \define{partial antimatching} of $G$ is a
partial function \mbox{$f: X \to X$} satisfying these conditions for all $x \in
\dom(f)$.

We have the following lemma constructing antimatchings in the topological
context, using a result of Conley and Miller on the existence of Borel
matchings in the topological context: 

\begin{lemma} \label{antimatching_cat}
  Suppose $n \geq 3$ and $G$ is an acyclic Borel bipartite $n$-regular
  graph on a Polish space $X$. Then there exists a Borel antimatching of
  $G$ modulo a $G$-invariant meager set. 
\end{lemma}
\begin{proof}
  By~\cite{ConleyMiller}, there exists a Borel perfect matching for $G$
  restricted to a $G$-invariant meager set $C$. Let $A$ be one half of a Borel
  partition of $X$ witnessing the bipartiteness of $G$, and let $M$ be the
  Borel perfect matching of $G \restriction C$. Then we can construct a Borel antimatching
  $f$ for $G \restriction C$ in the following way: if $x \in A$ and $\{x,y\} \in M$, then
  set $f(x) = y$. If $x \notin A$, then choose some neighbor $y$ of $x$
  such that $\{x,y\} \notin M$ and set $f(x) = y$.
\end{proof}

The following lemma is useful when dealing with Borel antimatchings.

\begin{lemma}\label{extension_of_antimatchings}
  Suppose $G$ is a locally countable Borel graph, and $f$ is
  a partial Borel antimatching of $G$ such that $\ran(f) \subset \dom(f)$, 
  and every connected component of $G$ contains some $x \in
  \dom(f)$. Then $f$ can be extended to a total Borel antimatching $f^*$ of
  $G$.
\end{lemma}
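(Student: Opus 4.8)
The plan is to route every vertex toward $\dom(f)$ along a geodesic; because the distance to $\dom(f)$ strictly decreases at each step, no length-two cycle can appear, and the antimatching condition comes for free. Write $D = \dom(f)$ and let $d: X \to \N$ be the $G$-distance to $D$, i.e.\ $d(x)$ is the least length of a $G$-path from $x$ to a vertex of $D$. Since every connected component of $G$ meets $D$ by hypothesis, $d$ is finite everywhere, and since $G$ is locally countable it is Borel: one has $\{x : d(x) \leq 0\} = D$ and $\{x : d(x) \leq k+1\} = \{x : d(x) \leq k\} \union \{x : \exists y\,(x \G y \pand d(y) \leq k)\}$, where the existential quantifier ranges over the countably many neighbors of $x$ and so, by Lusin--Novikov uniformization, defines a Borel set.

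Next I would use Lusin--Novikov again to fix Borel functions $g_0, g_1, \ldots : X \to X$ whose graphs cover $G$, so that the neighbors of any $x$ are exactly the values $g_n(x)$ with $x \G g_n(x)$. I then define $f^*(x) = f(x)$ for $x \in D$, and for $x \notin D$ set $f^*(x) = g_{n(x)}(x)$, where $n(x)$ is the least $n$ with $x \G g_n(x)$ and $d(g_n(x)) = d(x) - 1$. Such an $n$ exists because $d(x) \geq 1$ guarantees a neighbor strictly closer to $D$, and the construction is manifestly Borel, total, and agrees with $f$ on $D$. Since $f^*(x)$ is always a neighbor of $x$, the edge requirement $x \G f^*(x)$ is immediate, so the only thing left to check is $f^*(f^*(x)) \neq x$.

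For this I would split into cases on whether $x \in D$. If $x \in D$, then $f^*(x) = f(x) \in \ran(f) \subset D$, so $f^*(f^*(x)) = f(f(x)) \neq x$ because $f$ is a partial antimatching. If $x \notin D$, put $y = f^*(x)$, so $d(y) = d(x) - 1$: when $d(x) \geq 2$ we have $y \notin D$ and $d(f^*(y)) = d(y) - 1 = d(x) - 2 \neq d(x)$, hence $f^*(f^*(x)) \neq x$; when $d(x) = 1$ we have $y \in D$, so $f^*(y) = f(y) \in \ran(f) \subset D$ has distance $0 \neq 1 = d(x)$, again forcing $f^*(f^*(x)) \neq x$. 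Thus the antimatching property is preserved in every case. The only technical obstacle is the Borelness of $d$ and of the selection defining $f^*$, but both reduce routinely to local countability via Lusin--Novikov; the conceptual point is simply that stepping strictly toward $\dom(f)$ drops the distance by exactly one and so cannot close a two-cycle.
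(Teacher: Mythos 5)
Your proof is correct and follows essentially the same route as the paper's: define $f^*$ to agree with $f$ on $\dom(f)$ and otherwise send each vertex to a neighbor strictly closer to $\dom(f)$, selected via Lusin--Novikov, so that the strict decrease in distance (together with $\ran(f)\subset\dom(f)$ for the base case) rules out two-cycles. Your version just spells out the Borelness of the distance function and the case analysis more explicitly than the paper does.
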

\begin{proof}
  Define $f^*$ as follows. Let $f^*(x) = f(x)$ if $x \in \dom(f)$.
  Otherwise, let $f^*(x) = y$, for some neighbor $y$ of $x$ such that
  the distance in $G$ from $y$ to an element of $\dom(f)$ is as small as possible
  (using 
  Lusin-Novikov
uniformization~{\cite[18.10, 18.15]{MR1321597}}
  to choose such a $y$ when there is more than one). Then clearly $f^*(f^*(x)) \neq x$
  since for any $x \notin \dom(f)$, we have that $f^*(x)$ is closer to some
  element of $\dom(f)$ than $x$.
\end{proof}

Throughout this section, we assume that we have a Borel linear order on all
our standard Borel spaces. Thus, when we speak of the least element of some
finite subset of a standard Borel space, we are referring to the least
element with respect to this order. One way of obtaining such a linear
order is via a Borel bijection with a standard Borel space equipped with a
canonical Borel linear ordering, such as the one on $\R$. These linear orderings are useful when we need to break ``ties''
in our constructions when we are faced with some irrelevant choice. In cases
where we need to choose one of finitely many points, we will generally
break ties by choosing the least point according to this ordering. In cases where
we need to choose one of countably many options, we can use 
uniformization as we have above.

\begin{lemma}\label{antimatching_2reg}
  If $G$ is an acyclic locally finite Borel graph of degree $\geq 2$,
  then there is a partial Borel antimatching $f$ of $G$ such that $G
  \restriction
  \comp{(\dom(f))}$ is $2$-regular.
\end{lemma}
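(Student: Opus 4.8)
The plan is to separate the two requirements on $f$: that $\comp{\dom(f)}$ induce a $2$-regular graph, and that $f$ be a Borel antimatching on $\dom(f)$. The second requirement is the easy one. Since every vertex of $G$ has degree $\geq 2$, every vertex has a neighbor, so whatever Borel set $D$ we decide to take as $\dom(f)$, we can always point each $x \in D$ to a $G$-neighbor; the only thing to arrange is that no forbidden $2$-cycle $f(f(x)) = x$ occurs. To do this, fix a Borel proper coloring $c \colon X \to \N$ of $G$ (one exists by a standard argument since $G$ is locally finite) and let $f(x)$ be the neighbor of $x$ of least color, breaking ties with the ambient Borel linear order. If $f(x) = y$ and $f(y) = x$, then $y$ is a least-color neighbor of $x$ and $x$ is a least-color neighbor of $y$, so $c(y) \leq c(x)$ and $c(x) \leq c(y)$; but $x \G y$ forces $c(x) \neq c(y)$, a contradiction. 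Hence $f$ is a Borel partial antimatching with $\dom(f) = D$, and the whole lemma reduces to producing a Borel set $C = \comp{D}$ with $G \restrict C$ equal to a $2$-regular graph. (Alternatively one can feed a seed orientation into Lemma~\ref{extension_of_antimatchings}.)

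It remains to build such a $C$. Since $G$ is acyclic, locally finite, and of degree $\geq 2$, each connected component is an infinite, locally finite tree with no leaves; in particular every vertex lies on a bi-infinite geodesic, and a component with exactly two ends is itself such a geodesic. Call a bi-infinite geodesic a \emph{line}, and call two lines \emph{compatible} if they are vertex-disjoint and no $G$-edge joins a vertex of one to a vertex of the other. I would take $C$ to be the union of a Borel family $\mathcal{L}$ of pairwise compatible lines that is \emph{maximal} among such families. Each line is an induced bi-infinite path, hence $2$-regular, and compatibility guarantees that $G \restrict C$ has no edges beyond those internal to the individual lines; thus $G \restrict C$ is $2$-regular. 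Moreover, maximality forces $\mathcal{L}$ to meet every component: a component disjoint from $\bigunion \mathcal{L}$ would contain a line compatible with every member of $\mathcal{L}$, which could then be added, contradicting maximality. So this $C$ is a genuine everywhere-defined $2$-regular skeleton, which is what the later measure- and category-theoretic arguments want, since each line carries an aperiodic $\Z$-structure.

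The real work, and the main obstacle, is the Borel construction of a maximal compatible family of lines. One cannot simply select a single line in each component, because choosing a line amounts to choosing a pair of ends of the tree, and there is no Borel way to select ends. The plan is therefore never to commit to one line per component, but to grow a whole family simultaneously in $\omega$ Borel stages. At stage $k$ I would use a Borel proper $\N$-coloring of the distance-power graph $G^{\leq k}$ (locally finite, hence colorable) together with the Borel linear order to extend the finite geodesic segments built so far, and to start new segments anchored at marker points picked out by local color-minimality, always refusing any extension that would violate compatibility with an already committed segment. Following the pattern of Lemma~\ref{intersection_graph_coloring}, the candidate finite segments at each stage can be organized and selected in a Borel, conflict-free manner. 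The leafless hypothesis guarantees that every partial geodesic can always be extended in both directions, so no line stalls into a finite path and the limits of the committed segments are genuinely bi-infinite.

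The steps I expect to require the most care are verifying that the family $\mathcal{L}$ obtained in the limit is Borel and genuinely pairwise compatible — the compatibility constraint couples far-apart decisions, so the staging must be arranged so that a later segment can always detect and avoid a committed neighbor — and verifying maximality, i.e.\ that the greedy process cannot terminate with a component empty of lines. Both hinge on carefully bookkeeping the frontier of each committed segment inside the distance-$k$ colorings. Once that bookkeeping is set up, the remaining verifications — that $C$ is Borel and induces a $2$-regular graph, and that the least-color $f$ of the first paragraph is the desired Borel antimatching — are routine.
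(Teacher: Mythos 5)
There are two genuine gaps here, and the first one is fatal on its own. Your least-color rule does not produce an antimatching: from ``$y$ is the least-color neighbor of $x$'' you cannot conclude $c(y)\leq c(x)$, because $x$ is not among its own neighbors, so the two inequalities you write down simply do not follow. Concretely, if $c(x)=0$, the neighbors of $x$ have colors $1$ and $2$ with $y$ the one colored $1$, and the neighbors of $y$ have colors $0$ and $2$ with $x$ the one colored $0$, then $f(x)=y$ and $f(y)=x$. Indeed no local rule of this kind can work: applied with $D=X$ it would yield a total Borel antimatching of every locally finite Borel graph of degree $\geq 2$, contradicting the failure of item (3) of Lemma~\ref{dcs_equiv_1} in the Borel context (which follows from Theorem~\ref{nodcs}). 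The correct way to antimatch $\comp{C}$ is to point each vertex toward $C$ along the tree, in the spirit of Lemma~\ref{extension_of_antimatchings}; but that requires $C$ to meet every connected component, so your second step is forced to carry the entire weight of the lemma.

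And the second step is not carried out. Producing a Borel set $C$ meeting every component with $G\restrict C$ $2$-regular is at least as hard as the lemma itself, and your sketch has a concrete flaw: the claim that leaflessness prevents stalling ignores the compatibility constraint. At an endpoint $v$ of a committed segment there are only finitely many possible continuations, and every one of them may already lie in, or be adjacent to, another committed segment, so a segment can be forced to terminate as a finite path; nothing in the staged distance-$k$ coloring bookkeeping rules this out, and you give no mechanism for reserving room for a whole line in advance (which would again amount to selecting ends). The paper avoids building lines altogether: it runs a single greedy pass in which each activated vertex $x$ reserves two of its neighbors and antimatches all its remaining neighbors toward $x$, maintaining the invariant that every vertex which is in $\ran(f)$ but not in $\dom(f)$ has exactly two ``free'' neighbors; the $2$-regular set is then the residue $\comp{(\dom(f))}$, which is permitted to be empty on components where $f$ came out total. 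Dropping the requirement that the $2$-regular part meet every component is precisely what makes the paper's construction go through, and it is exactly the requirement your architecture cannot drop.
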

\begin{proof}
  Let $G$ be a locally finite Borel graph of degree $\geq 2$ on a
  standard Borel space $X$. Using Lemma~\ref{intersection_graph_coloring}, 
  let $\{A_i\}_{i \in \N}$ be a Borel partition of $X$ 
  such that for all $i$, for all distinct $x, y \in A_i$, 
  the distance between $x$ and $y$ in $G$ is greater than $2$. 

  Let $k_0, k_1, \ldots$ be a sequence
  containing each natural number infinitely many times.
  We define a sequence $f_0 \subset f_1 \subset \ldots$
  of partial Borel antimatchings whose union will be the $f$ we desire. 
  These $f_i$ will all have the property that if
  $x \in \ran(f_i)$ and $x \notin \dom(f_i)$, then there
  exist exactly two neighbors $y$ of $x$ such that $y
  \notin \dom(f_i)$ or $f_i(y) \neq x$. 

  Let $f_0 = \emptyset$. Now we define $f_{i+1} \supset f_{i}$. For each $x
  \in A_{k_{i}}$ such that $x \notin \dom(f_i)$, do the following: if there
  exists some neighbor $y$ of $x$ such that $y \in \dom(f_i)$ and $f_i(y)
  \neq x$, then use uniformization to choose some such $y$ and define 
  $f_{i+1}(x) = y$. If there does not exist any such $y$ and $x \notin
  \ran(f_i)$, then 
  choose exactly two neighbors $y_1$ and $y_2$
  of $x$ and define $f_{i+1}(y) = x$ for all neighbors $y$ of $x$
  that are not equal to $y_1$ or $y_2$.

  Let $f = \bigunion_{i \in \N} f_i$. Now if 
  $x \notin \dom(f)$, there are exactly two neighbors $y$ of
  $x$ such that $y \notin \dom(f)$ or $f(y) \neq x$. However, if $x$ had a
  neighbor $y$ such that $f(y) \neq x$, then we would have $x \in \dom(f)$.
  Hence, both these two $y$ must not be in $\dom(f)$. Thus, 
   $G \restriction \comp{(\dom(f))}$ is $2$-regular. 
\end{proof}

We are now ready to proceed.

\begin{thm}\label{dcs_equiv_1}
  Suppose $n \geq 3$. Then the following statements are all false. However,
  the statements are all true modulo a nullset with respect to any
  Borel probability measure, and true modulo a meager set with respect to
  any compatible Polish topology. 
  \begin{enumerate}
    \item Every pair $E$ and $F$ of countable Borel equivalence relations 
    on a standard Borel space $X$ 
    such that the $E$-classes all have cardinality $\geq 3$ and the $F$-classes
    all have cardinality $\geq 2$ admits disjoint Borel complete sections. 
    \item Every pair $E$ and $F$ of independent aperiodic countable Borel
    equivalence relations admits 
    disjoint Borel complete sections. 
    \item Every locally finite Borel graph $G$ having degree at least
    $3$ has a Borel antimatching. 
    \item Every acyclic Borel bipartite $n$-regular graph $G$ has a Borel
    antimatching. 
  \end{enumerate}
\end{thm}

\begin{proof}
  (1) is false by Theorem~\ref{nodcs}. (2) is true in the measure-theoretic
  context by Lemma~\ref{lemma:measure_dcs}. (4) is true in the topological
  context by Lemma~\ref{antimatching_cat}. 
  
  We will finish the proof of the theorem by showing (1) $\implies$ (2)
  $\implies$ (3) $\implies$ (1), and (3) $\implies$ (4) $\implies$ (2).
  Further, each of these implications will be done in a ``local'' way so
  that these implications also yield the truth of these statements in the
  measure and category contexts. We will discuss this more in
  what follows. 

  (1) $\implies$ (2) is obvious.

  (2) $\implies$ (3). Let $X$ be a standard Borel space. We will begin by proving
  the special case where $G$ is a $3$-regular acyclic Borel graph on
  $\{0,1\} \times X$ where $(0,x) \G (1,y)$ if and only if $x = y$. For $i \in \{0,1\}$,
  let $F_i$ be the equivalence relation on $X$ such that $x \mathrel{F_i} y$ if and only if $(i,x)$
  and $(i,y)$ are in the same connected component of $G \restriction \{i\}
  \times X$. The $F_i$ are
  independent because $G$ is acyclic. Let $B \subset X$ be a Borel set
  such that $B$ is a complete section for $F_0$ and $\comp{B}$ is a complete
  section for $F_1$. We can use $B$ to define a Borel antimatching. The rough
  idea is to direct elements of $\{0\} \times X$ towards elements of
  $B$ and direct elements of $\{1\} \times X$ away from elements of
  $B$. 
  
  If $x \in B$, define $f((0,x)) = (1,x)$. Then let $z$ be a point
  of $\comp{B}$ such that $(1,z)$ is closest to $(1,x)$ in $G \restriction \{1\}
  \times X$ (breaking ties as usual), and define $f((1,x)) = (1,y)$ where $(1,y)$ is the neighbor of
  $(1,x)$ along the path from $(1,x)$ to $(1,z)$. Likewise, if $x \in
  \comp{B}$, define $f((1,x)) = (0,x)$, let $z$ be a point of $B$
  such that $(0,z)$ is closest to $(0,x)$ in $G \restriction \{0\} \times X$,
  and define $f((0,x)) = (0,y)$ where $(0,y)$ is the neighbor of $(0,x)$
  along the path from $(0,x)$ to $(0,z)$.

  Now let $G$ be an arbitrary locally finite Borel graph on $X$ having degree at least
  $3$. 
  First, we may assume that $G$ is acyclic. To see this, use
  Lemma~\ref{intersection_graph_coloring} to obtain a Borel set $C$ of
  pairwise disjoint cycles that contains at least one cycle from each connected
  component of $G$ containing a cycle. Now define a Borel antimatching $f$ on
  these connected components as follows. 
  For each cycle $x_0, x_1, \ldots x_n = x_0$ in $C$, let $f(x_i) =
  x_{i+1}$ for $i < n$, and $f(x_n) = x_0$. Now use
  Lemma~\ref{extension_of_antimatchings} to extend $f$ to a total Borel
  antimatching $f^*$ on these connected components.
  
  So assume that $G$ is acyclic. By Lemma~\ref{antimatching_2reg}, we can
  find a partial Borel antimatching of $G$ such that $G \restriction
  \comp{(\dom(f))}$ is $2$-regular. Let $A = \comp{(\dom(f))}$. Now take a
  Borel set of edges of $G \restriction A$ that are pairwise disjoint and so
  that the set contains at least one
  edge from each connected component of $G \restriction A$. Remove these edges
  from $G$ to obtain the Borel graph $G'$ on $X$. Now using
  Lemma~\ref{antimatching_2reg} on $G'$, we may obtain another set $B
  \subset X$ that is the complement of a partial Borel antimatching on $G'$
  such that $G' \restriction B$ is $2$-regular. Note that $G\restriction A$
  and $G'\restriction B$ do not have any connected components that are equal.

  Now these $A$ and
  $B$ correspond to places where we have failed to construct antimatchings.
  Hence, without loss of generality, we may assume that each connected
  component of $G$ meets both $A$ and $B$. By
  Lemma~\ref{intersection_graph_coloring}, let $C$ be a Borel set of
  pairwise disjoint finite paths in $G$ from elements of
  $A$ to elements of $B$ that contains at least one path from every
  connected component of $G$. We may assume that if $x_0, \ldots, x_n$
  is a path in $C$, then $x_0$ is the only point of this path in $A$, and
  $x_n$ is the only point of this path in $B$. (We allow paths consisting
  of a single point where $A$ and $B$ intersect).
  Thus, each pair of connected
  components of $G \restriction A$ and $G' \restriction B$ are connected by at
  most one path in $C$, since $G$ is acyclic.
  
  Let $S \subset X$ consist of the connected components of $G \restriction A$
  that meet only finitely many paths in $C$. Since this set has a Borel
  transversal, we can obtain a Borel antimatching of $G \restriction S$. We
  can then use Lemma~\ref{extension_of_antimatchings} to extend this to a
  Borel antimatching of the connected components of $G$ that meet $S$. An
  identical comment is true for $B$. Thus, without loss of generality, we
  can assume that for each connected component of $G \restriction A$ and $G'
  \restriction B$, if there is a path in $C$ that meets this connected
  component, then there are infinitely many. 

  Let $Y$ be the
  collection of starting points of paths in $C$, and $Z$ be the collection
  of ending points of paths in $C$, so there is a canonical Borel bijection
  between $Y$ and $Z$. 
  Note that $Y$ and $Z$ may have nonempty
  intersection. Define $W = \{0\} \times Y \union
  \{1\} \times Z$. Consider the $3$-regular Borel graph $H$ on $W$, defined
  by the following three conditions. First, $(0,x) \mathrel{H} (1,y)$ if and only if
  there is a path in $C$
  from $x$ to $y$. Second, $(0,x) \mathrel{H} (0,y)$ if and only if there is a path from 
  $x$ to $y$ in $G \restriction A$ that does not contain any other element of
  $Y$. Third, $(1,x) \mathrel{H} (1,y)$ if and only if there is a
  path from $x$ to $y$ in $G' \restriction B$ that does not contain any other
  element of $Z$. $H$ is $3$-regular since connected components of $G
  \restriction A$ and $G' \restriction B$ that are met by paths in $C$ are
  met by infinitely many such paths.

  $H$ is a graph of the type we discussed at the beginning of this proof,
  and hence we can find a Borel antimatching of $H$. Let $A^* \subset A$ be
  the points that are in the same connected component of $G \restriction A$ as
  some element of $Y$. Let $B^* \subset B$ be the points that are in the
  same connected component of $G' \restriction B$ as some element of $Z$.
  It is clear that we can lift the Borel
  antimatching of $H$ to a partial Borel antimatching $f$ of $G$ whose
  domain is $A^* \union B^* \union \{x : \exists p \in C (x \in p)\}$, and
  such that $\ran(f) \subset \dom(f)$. We finish by applying
  Lemma~\ref{extension_of_antimatchings}.

  Our proof above has shown that (2) $\implies$ (3). We now show that 
  assuming that (2) is true modulo a nullset with respect to every Borel
  probability measure implies that (3) is true modulo a nullset with
  respect to every Borel probability measure. 
  
  Assume $G$ is a locally finite Borel graph on $X$ and $\mu$ is a Borel
  probability measure on $X$. Let $E_G$ be the connectedness relation for
  $G$. We can find a Borel probability measure $\nu$ which dominates
  $\mu$ and such that $\nu$ is $E_G$-quasi-invariant~{\cite[Section
  8]{MR2095154}}. Now perform the same
  process as above to obtain a pair of equivalence relations $E$ and
  $F$ on some Borel subset $Y$ of $X$, such that from Borel disjoint complete
  sections for $E$ and $F$, we can define a Borel antimatching of $G$. 
  
  Now this transformation of
  disjoint complete sections for $E$ and $F$ into an antimatching of $G$ is
  ``local'' in the sense that inside each connected component $C$ of $G$, we
  have a Borel way of transforming disjoint complete sections for $E
  \restriction Y \inters C$ and $F \restriction Y \inters C$ into an antimatching
  of $G \restriction C$.
  Hence, given disjoint Borel sets $A$ and $B$ such that $A$ meets
  $\nu$-a.e.\ $E$-class and $B$ meets $\nu$-.a.e.\ $F$-class, we can find a
  Borel antimatching of $G$ restricted to a Borel $\nu$-conull set, 
  since $\nu$ is $E_G$-quasi-invariant.
   
  Throughout the remainder of this proof, the same idea as above can be
  used to turn pure Borel implications between our four statements into
  implications in the measure context, and in the Baire category context.
  We leave it to the reader to perform the rest of these transformations.

  (3) $\implies$ (1)
  Let $E$ and $F$ be countable Borel equivalence relations such that every
  $E$-class has cardinality $\geq 3$ and every $F$ class has cardinality
  $\geq 2$. 
  By~{\cite[Proposition 7.4]{MR2095154}}, there exist Borel
  equivalence relations $E^* \subset E$ and $F^* \subset F$ such that every
  $E^*$-class is finite and has cardinality $\geq 3$ and every $F^*$-class
  is finite and has cardinality $\geq 2$. Hence, we may assume that 
  all the equivalence classes of $E$ and $F$
  are finite. Let $Y \disjointunion Z$ be the disjoint union of the
  equivalence classes of $E$ and the equivalence classes of $F$
  respectively. Let $G$ be the graph on $Y
  \disjointunion Z$ where $R$ and $S$ are adjacent in $G$ if $R \in Y$, $S
  \in Z$, and $R \inters S \neq \emptyset$.

  Now let $W$ be an uncountable standard Borel space, and
  extend $G$ to a locally finite Borel graph $G^*$ on $Y \disjointunion Z \disjointunion
  W$ so that every vertex in $Y \disjointunion W$ has degree
  $\geq 3$ in $G^*$, every vertex in $Z$ has degree $\geq 2$ in $G^*$, and such that $R \in Y
  \disjointunion Z$ is adjacent to an element of $W$ in $G^*$ if and only if $R \in
  Y$ and the degree of $R$ is $< 3$ in $G$ or $R \in Z$ and the degree of $R$ is
  $< 2$ in $G$. Note that for such $R$ there must be $S \in Y
  \disjointunion Z$ distinct from $R$ such that $R \inters S$ has cardinality $\geq 2$. 

  Now let $f$ be a Borel antimatching of $G^*$. Of course, $G^*$ does not
  have degree $\geq 3$. However, the neighbors of every degree $2$ vertex
  in $G^*$ all have degree $\geq 3$. Hence, we can contract away vertices of
  degree $2$, find a Borel antimatching of this graph, and then use it in
  the obvious way to find a Borel antimatching of $G^*$.
  
  Let $A_0$ be the set of $x \in X$ such that there exists $R \in Y$
  such that $f(R) \in Z$ and $R \inters f(R) = \{x\}$. Let $A_1$ be the
  Borel set of $x \in X$ such that there exists an $R \in Y$ and $S \in Z$
  such that $R \inters S$ has cardinality $\geq 2$, and $x$ is the least
  element of $R \inters S$. Let $A = A_0 \union A_1$. Clearly $A$ meets
  every equivalence class of $E$, and $\comp{A}$ meets every equivalence
  class of $F$.

  (3) $\implies$ (4) is obvious.

  (4) $\implies$ (2). 
  Suppose we have two independent aperiodic countable Borel equivalence relations $E$
  and $F$ on a standard Borel space $X$. By {\cite[Proposition
  7.4]{MR2095154}} we
  can find $E^*$ and $F^*$, finite Borel subequivalence relations of $E$
  and $F$ whose equivalence classes all have cardinality $n$. The
  intersection graph of their equivalence classes is Borel bipartite and
  $n$-regular. From a Borel antimatching for this graph, we can produce Borel disjoint
  complete sections for $E$ and $F$, as in the proof that (3)
  $\implies$ (1). 
\end{proof}

Our final goal will be to prove a
theorem about edge colorings for $3$-regular acyclic Borel bipartite graphs
in the context of measure and category. This will follow from 
several more equivalences extending those of Theorem~\ref{dcs_equiv_1} above. 

We begin with another definition.
Suppose $G$ is a graph on $X$. A \define{directing} of $G$ is a set
$D \subset G$ that contains exactly one of $(x,y)$ and $(y,x)$ for every
pair of neighbors $x,y \in X$.
A \define{partial directing} of $G$ is a
subset of $G$ that contains at most one of $(x,y)$ and $(y,x)$ for every
pair of neighbors $x,y \in X$. 
Given a partial directing $D$ of a graph $G$,
say that a point $x \in X$ is a \define{source} if $(x,y) \in D$ for some $y$, and
$(y, x) \notin D$ for all $y$. Similarly, say
that a point $x \in X$ is a \define{sink} if $(y,x) \in D$ for some $y$, and $(x,y)
\notin D$ for all $y$. 
Of course, if $f$ is an antimatching of a graph $G$ and 
we extend the set $\{(x,f(x)) : x \in X\}$ to a directing $D$ of $G$,
then this directing will have no sinks. 

\begin{lemma}\label{extension_of_directings}
  Suppose that $G$ is a locally countable Borel graph such that each vertex
  of $G$ has degree $\geq 2$, and $D$ is a partial Borel directing of $G$
  without sources or sinks. 
  Suppose also that every connected component of $G$ contains at least one
  vertex that is incident to an edge of $D$. Then $D$ can be extended to a
  total Borel directing $D^*$ of $G$ that has no sources or sinks.
\end{lemma}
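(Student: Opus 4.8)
The plan is to reduce to a cleaner target and then build the orientation by a single global edge-reversal. First I would observe that it suffices to produce a partial Borel directing $D' \supseteq D$ in which every vertex has out-degree at least $1$ and in-degree at least $1$: once this is achieved, I can orient each remaining undirected edge from its smaller to its larger endpoint in the fixed Borel linear order, and since every vertex already has both an incoming and an outgoing edge in $D' \subseteq D^*$, no vertex becomes a source or a sink. Let $S$ be the Borel set of vertices incident to an edge of $D$; because $D$ has no sources or sinks, every vertex of $S$ has both an in-edge and an out-edge in $D$, and by hypothesis every connected component of $G$ meets $S$. Let $d(x)$ denote the $G$-distance from $x$ to $S$, so that $d(x) = 0$ exactly when $x \in S$.

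Next I would orient every undirected edge from the endpoint of larger $d$-value to the endpoint of smaller $d$-value, breaking ties between equal-distance endpoints with the linear order; call the resulting partial directing $D_1 \supseteq D$. Every vertex $x \notin S$ lies on a shortest path to $S$ and hence has a strictly closer neighbor, so $x$ has an out-edge in $D_1$; together with the $D$-out-edges at $S$, this shows $D_1$ has no sinks. The sources of $D_1$ are precisely the strict local maxima: vertices outside $S$ with no strictly farther neighbor whose equal-distance edges all point away. Using Lusin--Novikov uniformization I fix, for each $x \notin S$, a strictly closer neighbor $\rho(x)$ (least in the order), yielding a Borel ``downhill'' forest rooted at $S$ whose edges $\{x, \rho(x)\}$ are oriented $x \to \rho(x)$ in $D_1$.

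The remaining task is to supply in-edges to the sources, and the key idea is to reverse not single edges (which cascade) but the entire downhill closure of the source set at once. Let $R = \bigcup_j \rho^j(\text{sources})$ be the smallest $\rho$-forward-closed Borel set containing every source, and let $D_2$ be obtained from $D_1$ by reversing the forest edge $x \to \rho(x)$ for every $x \in R \setminus S$. I would then verify that every vertex now has both an in- and an out-edge. A vertex outside $R$ has none of its edges touched (if $x \notin R$ and $\rho(u) = x$ then $u \in R$ would force $x \in R$) and was already neither a source nor a sink in $D_1$. A vertex $x \in R \setminus S$ receives an in-edge from the reversed edge $\rho(x) \to x$, and receives an out-edge either because it is itself a source (so it had out-degree $\geq 2$ in $D_1$ and loses only one under reversal) or, being non-source, because its predecessor $\rho^{j-1}(\text{source})$ on a source-path is a child of $x$ lying in $R$, whose edge is reversed into an out-edge of $x$. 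Finally a vertex of $R \cap S$ retains its $D$-in- and $D$-out-edges. Extending $D_2$ arbitrarily by the linear order then produces the desired total directing $D^* \supseteq D$ with no sources or sinks.

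I expect the main obstacle to be exactly this verification: repairing sources locally cascades toward $S$, so the construction must reverse a globally consistent family of edges rather than fix sources one at a time. The device that resolves this is reversing precisely the $\rho$-downhill closure $R$ of the sources: this set is Borel because $\rho$ is a countable-to-one Borel function, the reversal flips each forest edge at most once and touches no non-forest edge (so no edge is directed both ways), and the forward-closure property is exactly what guarantees that every non-source vertex of $R$ still owns a reversed child-edge to serve as its out-edge. This avoids any appeal to disjoint augmenting paths and keeps the entire argument within the standard Borel toolkit.
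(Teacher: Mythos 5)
Your argument is correct, but it takes a genuinely different route from the paper's. The paper grows the directed region in countably many stages: it partitions the finite paths and cycles of $G$ into Borel families of pairwise disjoint ones via Lemma~\ref{intersection_graph_coloring}, repeatedly absorbs every cycle and every path joining two already-directed vertices, and then observes that the resulting set $A$ of directed vertices is path-convex, so each remaining edge can be directed ``away from'' $A$ along its unique access path. You instead orient everything in one shot downhill toward the set $S$ of $D$-incident vertices with respect to the distance $d(\cdot,S)$, note that the only defects are sources (local maxima of $d$, necessarily outside $S$), and repair them all simultaneously by reversing the $\rho$-forward closure $R$ of the source set inside the downhill forest. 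The verification holds up: forward-closure of $R$ guarantees that no edge incident to a vertex outside $R$ is touched; each forest edge is flipped at most once since $d$ strictly decreases along $\rho$; a source has all of its $\geq 2$ edges outgoing in $D_1$ and so survives losing one; every non-source vertex of $R\setminus S$ acquires both a reversed parent edge (in-edge) and a reversed child edge (out-edge); vertices of $R\cap S$ keep their $D$-edges, which are never reversed because reversed edges have an endpoint outside $S$; and $R$ is Borel because $\rho$ is countable-to-one, so Lusin--Novikov gives Borelness of the iterated images. What the paper's staging buys is uniformity with the rest of Section~\ref{contrast}, where the same path-absorption device recurs (e.g.\ in the proof of (2) $\implies$ (3) of Lemma~\ref{dcs_equiv_2}); what your argument buys is a shorter, more local proof that dispenses with the path/cycle partition entirely and makes explicit why repairing sources does not cascade.
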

\begin{proof}
  Suppose that $x_0, x_1,
  \ldots, x_n$ is a path in $G$ such that $x_0$ and $x_n$ are both
  incident to edges already in $D$. Then we can extend $D$ by adding the
  edges from $x_0, x_1, \dots, x_n$ that do not conflict with edges
  already in $D$; add $(x_i, x_{i+1})$ to $D$ unless $(x_{i+1}, x_i)$ is
  already in $D$. The property that $D$ has no sources
  or sinks is preserved when we add paths in this way. Similarly, given a
  cycle, we can extend $D$ using this cycle in the analogous way,
  while preserving the property that $D$ has no sources or sinks.

  Use Lemma~\ref{intersection_graph_coloring} to partition all
  the finite paths and cycles of $G$ 
  into countably many Borel sets
  $\{P_i\}_{i \in \N}$ such that the elements of each $P_i$ are pairwise
  disjoint. Let $k_0, k_1, \ldots$ be a sequence that contains
  each element of $\N$ infinitely many times.
  Let $D_0 = D$.
  Now define $D_{i+1}$ from $D_i$ by extending $D_i$ via all the cycles of
  $P_{k_i}$, and all the paths of
  $P_{k_i}$ that start and end at vertices incident to at least one edge in
  $D_i$. Let $D_\infty = \bigunion_{i \in \N} D_i$. 

  Let $A$ be the set of vertices that are incident to at least one edge in
  $D_\infty$. It is clear that if $x_0 \in A$ and $x_0, x_1, \ldots, x_n$
  is a path in $G$, then $x_n \in A$ implies that $x_i \in A$ for all $0 \leq i
  \leq n$. 
  
  We finish by extending $D_\infty$ to $D^*$ by directing the remaining
  edges of $G$ ``away'' from $D_\infty$. More precisely, let $x$ and $y$
  be distinct elements of $X$ and suppose 
  that neither $(x,y)$ nor $(y,x)$ are in $D_\infty$. Then there must be a
  unique path $x_0, \ldots, x_n$ such that $x_0$ is incident to
  an edge in $D_\infty$, and the path ends with $(x_{n-1}, x_n)$ equal to $(y,x)$ or
  $(x,y)$. Extend $D_\infty$ to $D^*$ by adding all such $(x_{n-1},x_n)$.
\end{proof}

\begin{thm}\label{dcs_equiv_2}
  The fallowing statements are all false in the full Borel context. They are
  true modulo a nullset with respect to any Borel probability measure, and
  true modulo a meager set with respect to any compatible Polish topology. 
  \begin{enumerate}
    \item Every pair of countable Borel equivalence relations $E$ and
    $F$ on a standard Borel space $X$ 
    such that the $E$-classes all have cardinality $\geq 3$ and the $F$-classes
    all have cardinality $\geq 2$ admits disjoint Borel complete
    sections. 
    \item For every pair of aperiodic countable Borel
    equivalence relations $E$ and $F$ on a standard Borel space $X$, there
    exists a Borel set $B \subset X$ such that 
    $B$ and $\comp{B}$ are complete sections for both $E$ and
    $F$.
    \item Every $3$-regular Borel graph has a directing with no sinks or
    sources. 
    \item Every $3$-regular Borel bipartite graph has a 
    Borel edge coloring with $4$ colors.
  \end{enumerate}
\end{thm}

\begin{proof}
  (1) is false by Theorem~\ref{nodcs} and true in the measure and category
  context by Theorem~\ref{dcs_equiv_1}. We will use the same type of
  proof as Theorem~\ref{dcs_equiv_1}. 

  (1) $\implies$ (2). Since $E$ and $F$ are aperiodic, the argument that
  (3) $\implies$ (1) in Theorem~\ref{dcs_equiv_1} produces subequivalence
  relations $E^*$ and $F^*$ of $E$ and $F$ with finite classes, and a Borel
  set $A$ such that $A$ and $\comp{A}$ are complete sections for $E^*$, and
  $\comp{A}$ is a complete section for $F^*$. Hence, $A$ meets every
  $E$-class, and $\comp{A}$ meets every $E$-class and every $F$-class in
  infinitely many places. Thus, if we run the same argument on the
  aperiodic equivalence relations $E \restriction \comp{A}$ and $F
  \restriction
  \comp{A}$ with their roles reversed, we obtain a Borel set $A'
  \subset \comp{A}$ such that $A'$ meets every $F \restriction
  \comp{A}$-class, and $\comp{(A')}$ meets both every $E \restriction
  \comp{A}$-class and every $F \restriction \comp{A}$-class. Now let $B = A
  \union A'$. 

  (2) $\implies$ (1) follows from Theorem~\ref{dcs_equiv_1}.

  (2) $\implies$ (3). Let $G$ be a $3$-regular Borel graph. Using Lemma~\ref{extension_of_directings}, we may assume that $G$ is
  acyclic, as in the proof of (2) $\implies$ (3) for Theorem~\ref{dcs_equiv_1}. 

  We begin by letting $Y \subset [X]^2$ be
  a Borel set of pairwise disjoint edges of
  $G$ that contains at least one edge from each connected component of $G$.
  We define two countable Borel equivalence
  relations $E$ and $F$ on $Y$ as follows: $R$ and $S$ are related by $E$
  if their least points are connected in $G \setminus Y$, and related by
  $F$ if their greatest points are connected in $G \setminus Y$. Here we
  use $G \setminus Y$ to denote the
  graph $G$ with the edges from $Y$ removed.

  We may assume that all the equivalence classes of $E$ and $F$ are
  infinite; on the connected components of 
  $G \setminus Y$ that correspond to 
  equivalence classes of $E$ and $F$ that are finite, and we can apply
  Lemma~\ref{extension_of_directings} to get a directing of the connected
  components of $G$ containing points corresponding to 
  finite $E$-classes or $F$-classes.

  Now let $B \subset Y$ be a Borel set such that both $B$ and $\comp{B}$ are complete
  sections for $E$ and $F$. Let $D_0 = \{(x,y) : \{x,y\} \in B \text{ and $x$
  is less than $y$}\}$. Each connected component of $G \setminus Y$
  contains infinitely many $x$ such that $(x,y) \in D_0$ for some
  $y$, and infinitely many $y$ such that $(x,y) \in D_0$ for some $x$.
  We will extend $D_0$ to a total Borel directing of $G$
  without sinks or sources.
  
  Consider the set of paths $x_0, x_1, \ldots, x_n$ in $G \setminus Y$ such
  that there exists $y$ and $z$ such that both $(y,x_0)$ and $(x_n,z)$ are
  in $D_0$. 
  We may use Lemma~\ref{intersection_graph_coloring} to
  partition these paths into countably many Borel sets $\{P_i\}_{i \in \N}$
  such that the elements of each $P_i$ are pairwise disjoint. Now as in the proof of
  Lemma~\ref{extension_of_directings}, for each $i
  \in \N$, extend each $D_{i}$ to $D_{i+1}$ by adding the edges from
  the paths of $P_i$ which do not conflict with edges already in $D_i$. Let
  $D_\infty = \bigunion_{i \in \N} D_i$. Then complete $D_\infty$ to a
  total directing $D$ using Lemma~\ref{extension_of_directings}.

  (3) $\implies$ (1) follows from Theorem~\ref{dcs_equiv_1}. It is clear
  that such a directing can be used to define a Borel antimatching.

  (3) $\implies$ (4). Suppose that $G$ is a Borel bipartite $3$-regular
  graph whose bipartiteness is witnessed by the Borel sets $A$ and $B$.
  Suppose $D$ is a Borel directing of $G$ without sinks or sources. We can
  use $D$ to write $G$ as the disjoint union of two graphs $H_0$ and $H_1$
  in the following way: the edges of $H_0$ are those directed by $D$ from
  $A$ to $B$, and the edges of $H_1$ are those directed by $D$ from $B$ to
  $A$. The vertices in $H_0$ and $H_1$ all have degree $1$ or $2$. Hence,
  each connected component of the $H_i$ is finite, a ray (having
  exactly one vertex of degree $1$), or a line (having no vertices of
  degree $1$).

  If all the connected components of $H_0$ and $H_1$ were finite or rays,
  then it
  would be trivial to construct a Borel edge coloring of $G$ with four colors;
  we could simply edge color $H_0$ using the colors $\{0,1\}$, edge color
  $H_1$ using the colors $\{2,3\}$, and then take the union of these
  colorings. Our problem is that in general, we will need to use $3$
  colors in an edge coloring of an $H_i$ containing lines.

  Let $Y \subset [X]^2$ be a Borel set of pairwise disjoint 
  edges from $H_0$ consisting of infinitely many edges from each line
  in $H_0$. Define the countable
  Borel equivalence relations $F_0$ and $F_1$ on $Y$ where $S$ and $R$ are
  $F_i$-related if there exist $x \in S$ and $y \in R$ that are in the same
  connected component of $H_i$. 
  Clearly every equivalence class of $F_0$ is infinite, however, there may
  be equivalence classes of $F_1$ that are finite. 
  
  Now take a Borel set $C
  \subset Y$ that is a complete section for $F_0$, so that $\comp{C}$ meets
  every infinite equivalence class of $F_1$. We can find such a $C$ by
  letting $Z$ be an uncountable standard Borel space and 
  extending $F_0$ and $F_1$ to aperiodic equivalence relations $F_0^*$ and $F_1^*$ on
  $Y \disjointunion Z$ such that if $x \in Z$ and $y \in Y$, then
  $x \cancel{F_0^*} y$ and 
  $x F_1^* y$ only if $[y]_{F_1}$ is finite. Now find disjoint complete
  sections for $F_0^*$ and $F_1^*$.
  
  Let $H_0^*$ be the graph $H_0$ but with the edges from $C$ removed, and
  let $H_1^*$ be the graph $H_1$ but with the edges from $C$ added. Clearly
  $H_0^*$ has no lines. Further, all the lines that we have added to
  $H_1^*$ must contain rays from $H_1$. This is because the elements of $C$
  in a new line in $H_1^*$ must all be $F_1$-related, and therefore come
  from an $F_1$-class that is finite. Hence, we can edge-color these lines
  from $H_1^*$ in a Borel way with $2$-colors. 

  If we perform the same process again with $H_1^*$ and $H_0^*$ in lieu of
  $H_0$ and $H_1$, respectively, 
  then we obtain Borel graphs $H_0^{**}$ and $H_1^{**}$ such that $G =
  H_0^{**} \union H_1^{**}$ and both $H_0^{**}$ and $H_1^{**}$ have Borel
  edge colorings with $2$ colors.

  (4) $\implies$ (1). We use Theorem~\ref{dcs_equiv_1} again. Let $G$ be a
  Borel bipartite $3$-regular graph, whose bipartiteness is witnessed by
  the Borel sets $A$ and $B$. Suppose that $G$ has a Borel edge coloring
  with $4$ colors. We can use this coloring to define a Borel antimatching
  of $G$. First, partition the four colors into the sets $\{0,1\}$ and
  $\{2,3\}$. Notice that each vertex must be incident to at least one edge
  of color $0$ or $1$, and at least one edge of color $2$ or $3$. Thus, we
  can define a Borel antimatching by setting $f(x) = y$ if $x \in A$ and
  $y$ is the least neighbor of $x$ such that $(x,y)$ is colored $0$ or
  $1$, or if $x \in B$ and $y$ is the least neighbor of $x$ such that
  $(x,y)$ is colored $2$ or $3$. 
\end{proof}

As a consequence of the above lemma, we obtain the following: 

\begin{thm}\label{measurable_3regular_edge}
  Suppose $G$ is a Borel bipartite $3$-regular graph on $X$. Then $G$ has a
  Borel edge coloring with $4$ colors modulo a null set or meager set with
  respect to any Borel probability measure on $X$ or Polish topology
  realizing the standard Borel structure of $X$. 
\end{thm}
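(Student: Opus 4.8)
The plan is to read this theorem off directly from Lemma~\ref{dcs_equiv_2}, since the conclusion we want is precisely its item (4) asserted in the measure and category contexts. The substantive work has already been carried out in the lemmas; what remains is to explain why the Borel equivalences of Lemma~\ref{dcs_equiv_2} transport the measure- and category-theoretic solvability from item (1) to item (4). First I would recall that Theorem~\ref{dcs_measure_cat} establishes item (1) of Lemma~\ref{dcs_equiv_2}---the existence of disjoint Borel complete sections for a pair $E$, $F$ of countable Borel equivalence relations whose classes have cardinality $\geq 3$ and $\geq 2$ respectively---modulo a null set with respect to any Borel probability measure, and modulo a meager set with respect to any compatible Polish topology.

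Given a $3$-regular Borel bipartite graph $G$ on $X$ together with a Borel probability measure $\mu$ (the Baire category case being entirely parallel), I would invoke the reduction that produces an instance of item (1) from $G$, as in the proof that (4) $\implies$ (1), together with the inverse transformation turning disjoint Borel complete sections for the resulting $E$, $F$ into a Borel $4$-edge-coloring of $G$. The essential feature, already exploited in the proof of (2) $\implies$ (3) in Lemma~\ref{dcs_equiv_1}, is that every one of these transformations is \emph{local}: inside each connected component $C$ of $G$, a solution of the complete-section problem restricted to the objects living in $C$ yields a $4$-edge-coloring of $G \restrict C$, with no interaction between distinct components.

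To push the ``modulo a null set'' qualifier through the reduction, I would pass from $\mu$ to a dominating $E_G$-quasi-invariant Borel probability measure $\nu$, where $E_G$ is the connectedness relation of $G$, using \cite{MR2095154}*{Section 8}. Since item (1) holds $\nu$-almost everywhere, we obtain disjoint complete sections off a $\nu$-null set; by the locality of the reduction this yields a Borel $4$-edge-coloring of $G$ restricted to an $E_G$-invariant Borel $\nu$-conull set, and quasi-invariance of $\nu$ guarantees that this set is also $\mu$-conull. The category case runs identically with meager sets in place of null sets, taking the comeager set on which item (1) is solved to be $E_G$-invariant. The step I expect to require the most care is exactly this transfer of ``modulo null/meager'' through the reductions, namely checking that the conull (respectively comeager) set on which the complete-section problem is solved pulls back to a genuinely conull (comeager) set for the original $G$; this hinges on the locality of the constructions and on the quasi-invariance of $\nu$, both of which are already in hand from the proofs of Lemmas~\ref{dcs_equiv_1} and~\ref{dcs_equiv_2}, so that the conclusion then follows at once.
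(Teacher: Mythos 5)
Your proposal is correct and matches the paper's argument: the theorem is stated there as an immediate consequence of Lemma~\ref{dcs_equiv_2}, whose item (4) is exactly the desired conclusion and which is established in the measure and category contexts via item (1) (Lemma~\ref{lemma:measure_dcs} and the Baire category argument in Theorem~\ref{dcs_measure_cat}). The locality and quasi-invariance bookkeeping you spell out is precisely what the paper sketches in the proof of (2)~$\implies$~(3) of Lemma~\ref{dcs_equiv_1} and then leaves to the reader, so you have simply made the implicit transfer explicit.
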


The full measurable analogue of Vizing's theorem for Borel graphs remains
open.

\begin{question}
Given any $n$-regular Borel graph $G$ on a standard Borel
probability space $(X,\mu)$, must there be a $\mu$-measurable edge coloring
of $G$ with $n+1$ colors?
\end{question}

\end{document}